\newtheorem{theorem}{Theorem}[section]
\newtheorem{lem}{Lemma}[section]
\newtheorem{prp}[theorem]{Proposition}
\newtheorem{thm}[theorem]{Theorem}
\newtheorem{dfn}{Definition}[section]
\newtheorem{remark}{Remark}
\begin{document}

\title{Random dynamics of two-dimensional stochastic second grade fluids}

\author{ Shijie Shang$^{1}$ }

\footnotetext[1]{\ School of Mathematical Sciences, University of Science and Technology of China, 230026 Hefei, China. Email: ssjln@mail.ustc.edu.cn}
\date{}

\maketitle

\begin{abstract}
In this paper, we consider a stochastic model of incompressible non-Newtonian fluids of second grade on a bounded domain of $\mathbb{R}^2$ with multiplicative noise. We first show that the solutions to the stochastic equations of second grade fluids generate a continuous random dynamical system. Second, we investigate the Fr\'{e}chet differentiability of the random dynamical system.
Finally, we establish the asymptotic compactness of the random dynamical system, and the existence of random attractors for the random dynamical system, we also obtain the upper semi-continuity of the perturbed random attractors when the noise intensity approaches zero.

\end{abstract}



\vspace{3mm}
\noindent \textbf{Key Words:}
Second grade fluids;
Non-Newtonian fluids;
Fr\'{e}chet differentiability;
Asymptotically compact random dynamical system;
Random attractor.

\numberwithin{equation}{section}
\section{Introduction}
Recently, non-Newtonian fluids have attracted more and more attention. In this article, we consider a class of non-Newtonian fluids of differential type, namely the second grade fluids, which is an admissible model of slow flow fluids such as industrial fluids, slurries, polymer melts, etc.
The stochastic model is given as follows:
\begin{align}\label{1.a}
\left\{
\begin{aligned}
& d(u-\alpha \Delta u)+ \Big(-\nu \Delta u+{\rm curl}(u-\alpha \Delta u)\times u+\nabla\mathfrak{P}\Big)\,dt \\
&\quad = F(u)\,dt+(u-\alpha \Delta u)\circ \epsilon dW,\quad \rm{ in }\ \mathcal{O}\times(0,+\infty), \\
&\begin{aligned}
& ({\rm{div}}\,u)(t,x)=0, \quad &&\rm{in}\ \mathcal{O}\times(0,+\infty), \\
& u(t,x)=0,  &&\rm{in}\ \partial \mathcal{O}\times(0,+\infty), \\
& u(0,x)=f(x),  &&\rm{in}\ \mathcal{O},
&\end{aligned}
\end{aligned}
\right.
\end{align}
where $\mathcal{O}$ is a bounded domain of $\mathbb{R}^2$ with boundary $\partial \mathcal{O}$ of class $\mathcal{C}^{3,1}$. $u=(u_1,u_2)$ and $\mathfrak{P}$ represent the random velocity and the modified pressure, respectively. $\alpha$ is a positive constant and $\nu$ is the kinematic viscosity. $\epsilon$ is a nonzero real-valued parameter.
$W$ is a one-dimensional two-sided Brownian motion which will be specified later.
%
%
The fluid is driven by the external force $F(u)\,dt$ and the noise $(u-\alpha \Delta u)\circ\epsilon dW$, here $\circ$ denotes that the stochastic integral is interpreted in the Stratonovich sense. The initial value $f$ is a deterministic function on $\mathcal{O}$.


The model of second grade fluids reduces to Navier-Stokes equations(NSEs) when $\alpha=0$, and it's also a good approximation of the NSEs as shown in \cite{2016-Arada-p2557-2586,2002-Iftimie-p83-86}. Furthermore, it has interesting connections with other fluid models, see \cite{2003-Busuioc-p1119-1149,2001-Shkoller-p539-543} and references therein. We refer readers to \cite{1997-Cioranescu-p317-335,1995-Dunn-p689-729,1979-Fosdick-p145-152} for a comprehensive theory of the deterministic case.
For the stochastic case, we refer to \cite{2012-Razafimandimby-p4251-4270,2017-Shang-p-,2016-Wang-p196-213,2017-Zhai-p471-498} and references therein.

The purpose of this paper is to investigate dynamics of the stochastic model for the second grade fluids. In particular, we show that solutions of (\ref{1.a}) generate a continuous random dynamical system/continuous perfect cocycle, and we prove that the cocycle is Fr\'{e}chet differentiable at the points with higher regularity of the phase space. Then we establish that the random dynamical system is asymptotically compact and possesses a random attractor, and we also prove the upper semi-continuity of the perturbed random attractors as the noise intensity $\epsilon\rightarrow 0$.


To get our main results, we construct a version $u(t,f,\omega)$ of solutions to (\ref{1.a}) as follows:
\begin{align}\label{add 0325.1}
  u(t,f,\omega)=Q(t,\omega)v(t,f,\omega),
\end{align}
where $Q(t,\omega):=\mathrm{e}^{\epsilon W(t,\omega)}$, $v(t,f,\omega)$ satisfies a partial differential equation(PDE) with random coefficients.
We can show that this version $u$ generates a random dynamical system. And it is relatively convenient to use the transform (\ref{add 0325.1}) to investigate the continuity and the Fr\'{e}chet differentiability of the random dynamical system.
However, note that the transform (\ref{add 0325.1}) is non-stationary(see \cite{2003-Duan-p2109-2135,2004-Flandoli-p1385-1420} for stationary conjugations, which transform a stochastic partial differential equation(SPDE) into a random PDE and the two corresponding random dynamical systems are equivalent), and $v(t,f,\omega)$ even does not generate a random dynamical system.


The motivation of investigating Fr\'{e}chet derivatives of the solution $u(t,f,\omega)$ is mainly based on the following two considerations. Firstly,
in the theory of random invariant manifolds, to study dynamics near an equilibrium(or stationary random point) of a cocycle, the usual method is to analyse the spectrum for the corresponding linearized cocycle, which demands the cocycle to be sufficiently Fr\'{e}chet differentiable, see e.g. \cite{2010-Mohammed-p3543-3591,2008-Mohammed-p105-105}.
Secondly, from the anticipating/nonadapted stochastic analysis point of view,
if the initial value of (\ref{1.a}) is a random variable $\xi$(not deterministic) and measurable with respect to the $\sigma$-algebra $\sigma(W(t): 0\leq t\leq T)$ for some $T>0$, then the substitute process $u(t,\xi(\omega),\omega)$ will still be a solution of the anticipating SPDE (\ref{1.a}) with the anticipating initial value $u(0)=\xi$, the proof of such a substitution result depends on the Fr\'{e}chet differentiability of the solution $u(t,f,\omega)$ to (\ref{1.a}), see \cite{2018-Shang-p1138-1152}.
Since the curl-term in (\ref{1.a}) is order of three, the system (\ref{1.a}) is highly nonlinear(far from semi-linear). This leads to that the dissipation of (\ref{1.a}) is much weaker than that of 2D stochastic NSEs and other SPDEs considered in \cite{2010-Mohammed-p3543-3591} and \cite{2008-Mohammed-p105-105}, which have the smoothing effect in finite time.
To overcome this difficulty, we adopt the energy-equation approach (see \cite{1998-Moise-p1369-1393} or Lemma \ref{add 0323 lemma.1}) to prove the continuity of the solution $u(t,f,\omega)$.
Due to the weak dissipation,
we only establish that the solution $u(t,f,\omega)$ is Fr\'{e}chet differentiable at the points with higher regularity of the phase space, but don't know how to figure out the Fr\'{e}chet differentiability on the whole phase space.
As far as we are aware, our results about the Fr\'{e}chet differentiability are new even in the deterministic case.



Random attractors capture the asymptotic behavior of a random dynamical system.
The existence of random attractors has been studied in a lot of literature, see, e.g.
\cite{1998-Arnold-p586-586,1997-Crauel-p307-341,1994-Crauel-p365-393,1996-Flandoli-p21-45}
and the references therein.
In \cite{1994-Crauel-p365-393}, the transform (\ref{add 0325.1}) is also used to establish the existence of random attractors for NSEs on a bounded domain of $\mathbb{R}^2$ with linear multiplicative noise.
However, as we have already pointed out, the solution operator of (\ref{1.a}) is not smoothing or compact, so it is difficult to directly construct a compact invariant random set absorbing every bounded tempered random set.
In the deterministic case, Moise, Rosa and Wang \cite{1998-Moise-p1369-1393} overcame this difficulty by the energy-equation approach, where the authors showed that the corresponding dynamical system is asymptotically compact and possesses a compact global attractor under the constant external force.
Inspired by this, we establish the existence of random attractors for the random dynamical system by the following two steps(see \cite{2006-Bates-p1-21,2006-Caraballo-p484-498} or Lemma \ref{randomattractor}).
First, by establishing a supplementary property of the cocycle $u(t,f,\omega)$(see Lemma \ref{add 0328 lemma.1}),
we show that the random dynamical system admits a closed bounded tempered random absorbing set, under a smallness assumption on the Lipschitz constant of the external force $F$.
Second, we show that the random dynamical system is asymptotically compact in the stochastic setting. To achieve the asymptotic compactness, we construct a subsequence by the diagonal method, and then use the energy-equation for $u(t,f,\omega)$ established through the transform (\ref{add 0325.1}), to obtain the strong convergence of this subsequence.
Lastly, we establish the upper semi-continuity of the perturbed random attractors of (\ref{1.a}), i.e.
the perturbed random attractors converge to the corresponding deterministic global attractor in the sense of Hausdorff semi-metric as the random perturbations approach zero.
To show the precompactness of the union of perturbed random attractors, which is an important condition to establish the upper semi-continuity(see \cite{2009-Wang-p1-18} or Lemma \ref{add 0409.8}), we appeal again to the asymptotic compactness of the random dynamical systems.

The organization of this paper is as follows. Section 2 is to introduce some preliminaries.
In Section 3, we formulate the hypotheses and state our main results. In Section 4, we show that solutions of (\ref{1.a}) generate a continuous random dynamical system. Section 5 is devoted to the Fr\'{e}chet differentiability of the solution. In Section 6, we establish the asymptotic compactness of the random dynamical system, the existence and the upper semi-continuity of random attractors.


Throughout this paper, $C,C(T,\omega),...$ are positive constants whose value may be different from line to line and which may depend on some parameters $T,\omega,...$.


%


\section{Preliminaries}

In this section, we will introduce some functional spaces and preliminaries needed in the paper.

For $p\geq 1$ and $k\in\mathbb{N}$, we denote by $L^p(\mathcal{O})$
and $W^{k,p}(\mathcal{O})$ the usual $L^p$ and Sobolev spaces over $\mathcal{O}$ respectively, and write $H^k(\mathcal{O}):=W^{k,2}(\mathcal{O})$.
We write $\mathbb{X}=X\times X$ for any vector space $X$.
The set of all divergence free and infinitely differentiable functions with compact support in $\mathcal{O}$ is denoted by $\mathcal{C}$.
$\mathbb{H}$ (resp. $\mathbb{V}$) is the completion of $\mathcal{C}$ in $\mathbb{L}^2(\mathcal{O})$ (resp. $\mathbb{H}^1(\mathcal{O})$).
We denote by $(\cdot,\cdot)$ and $|\cdot|$ the inner product in $\mathbb{L}^2(\mathcal{O})$(in $\mathbb{H}$) and the induced norm, respectively.
Let $((u,v)):=\int_\mathcal{O}\nabla u\cdot\nabla vdx$, where $\nabla$ is the gradient operator.
%
%
%
%
Set $\|u\|:=((u,u))^{\frac{1}{2}}$.
We endow the space $\mathbb{V}$ with the norm $|\cdot|_{\mathbb{V}}$ generated by the inner product $(u,v)_\mathbb{V}:=(u,v)+\alpha ((u,v))$, for any $u,v\in\mathbb{V}$.
%
Let $\mathcal{P}>0$ be the $\rm Poincar\acute{e}$ constant for the domain $\mathcal{O}$, i.e. $\mathcal{P}$ is the optimal constant in the following $\rm Poincar\acute{e}$ inequality
\begin{align}\label{add 0329.1}
  |v|\leq \mathcal{P} |v|_\mathbb{V},\quad \forall\, v\in\mathbb{V}.
\end{align}
(\ref{add 0329.1}) implies that
\begin{align}\label{Poincare}
(\mathcal{P}^2+\alpha)^{-1}|v|^2_\mathbb{V} \leq \|v\|^2
\leq\alpha^{-1}|v|^2_\mathbb{V},\quad \forall\, v\in\mathbb{V}.
\end{align}
We also introduce the space $\mathbb{W}:=\big\{u\in\mathbb{V}: {\rm curl}(u-\alpha\Delta u)\in L^2(\mathcal{O})\big\}$,
and endow it with the semi-norm $|\cdot|_{\mathbb{W}}$ generated by the scalar product $(u,v)_\mathbb{W}:=\big({\rm curl}(u-\alpha\Delta u),{\rm curl}(v-\alpha\Delta v)\big)$. In fact, $\mathbb{W}=\mathbb{H}^3(\mathcal{O})\cap\mathbb{V}$, and this semi-norm $|\cdot|_{\mathbb{W}}$ is equivalent to the usual norm in $\mathbb{H}^3(\mathcal{O})$, the proof can be found in \cite{1997-Cioranescu-p317-335}.


Identifying the Hilbert space $\mathbb{V}$ with its dual space $\mathbb{V}^*$, via the Riesz representation, we consider the system (\ref{1.a}) in the framework of Gelfand triple:
$\mathbb{W}\subset \mathbb{V}\subset\mathbb{W}^*$.
%
\noindent We also denote by $\langle \cdot,\cdot\rangle$ the dual relation between $\mathbb{W}^*$ and $\mathbb{W}$ from now on.

Because the injection of $\mathbb{W}$ into $\mathbb{V}$ is compact, there exists a sequence $\{e_i\}_{i=1}^{\infty}$ of elements of $\mathbb{W}$ which forms an orthonormal basis in $\mathbb{W}$ and an orthogonal basis in $\mathbb{V}$, such that
\begin{align}\label{Basis}
(u,e_i)_{\mathbb{W}}=\lambda_i(u,e_i)_{\mathbb{V}},\quad\forall\,u\in\mathbb{W},
\end{align}
where $0<\lambda_i\uparrow\infty$. Since $\partial \mathcal{O}$ is of class $\mathcal{C}^{3,1}$, Lemma 4.1 in \cite{1997-Cioranescu-p317-335} implies that $e_i\in \mathbb{H}^4(\mathcal{O}),\ \forall\,i\in\mathbb{N}$.

Define the Stokes operator by $Au:=-\Pi_{\mathbb{H}}\Delta u, \forall\,u\in D(A)=\mathbb{H}^2(\mathcal{O})\cap\mathbb{V}$, here the map $\Pi_{\mathbb{H}}:\mathbb{L}^2(\mathcal{O})\rightarrow\mathbb{H}$ is the usual Helmholtz-Leray projection, that is, for any $u\in \mathbb{L}^2(\mathcal{O})$, there exist $\phi_u\in\mathbb{H}^1(\mathcal{O})$ and a unique $\Pi_{\mathbb{H}}u\in\mathbb{H}$ such that $u=\Pi_{\mathbb{H}}u+\nabla\phi_u$. Hence
\begin{align}
\label{curlP=curl} {\rm curl}(\Pi_{\mathbb{H}}v)={\rm curl}(v),\quad \forall\ v\in\mathbb{H}^1(\mathcal{O}),\\
\label{W equivalence norm} |u|_\mathbb{W}=|{\rm curl}[(I+\alpha A)u]|,\quad \forall\ u\in\mathbb{W}.
\end{align}

\noindent Since the Stokes operator $A$ is self-adjoint and positive on $\mathbb{H}$, the spectrum $\sigma(A)\subset [0,\infty)$, and the resolvent $(I+\alpha A)^{-1}$ is a linear bounded operator on $\mathbb{H}$.
Furthermore, the Lax-Milgram theorem implies that $(I+\alpha A)^{-1}u\in\mathbb{V}$ for any $u\in\mathbb{H}$(or see Theorem \uppercase\expandafter{\romannumeral1}.2.1 in \cite{1977-Temam-p500-500}).
By a straightforward calculation, we have
\begin{align}\label{curl and V}
\big|{\rm curl}(v)\big|^2\leq \frac{2}{\alpha}|v|_{\mathbb{V}}^2,\quad \forall\, v\in\mathbb{V}.
\end{align}
Therefore, from (\ref{W equivalence norm}) and (\ref{curl and V}), we obtain that the operator $(I+\alpha A)^{-1}$ also defines an isomorphism from $\mathbb{H}^1(\mathcal{O})\cap\mathbb{H}$
into $\mathbb{W}$. Moreover,
\begin{gather}
\label{inverse operator transform inequality}|(I+\alpha A)^{-1}v|_\mathbb{W}\leq C |v|_{\mathbb{V}}, \quad \forall\, v\in\mathbb{V}, \\
\label{inverse operator transform} \big((I+\alpha A)^{-1}u,v\big)_\mathbb{V}=(u,v), \quad \forall\, u,v\in\mathbb{V}.
\end{gather}

\noindent Since the injection of $\mathbb{W}$ into $\mathbb{V}$ is compact, from (\ref{inverse operator transform}) we see that $(I+\alpha A)^{-1}$ is also a self-adjoint positive compact operator from $\mathbb{V}$ to $\mathbb{V}$. What's more, the operator norm $\|(I+\alpha A)^{-1}\|_{L(\mathbb{V},\mathbb{V})}$ is given by
\begin{align}\label{add 0329.2}
  \|(I+\alpha A)^{-1}\|_{L(\mathbb{V},\mathbb{V})}=\sup_{v\in\mathbb{V},v\neq 0}\frac{((I+\alpha A)^{-1}v,v)_\mathbb{V}}{(v,v)_\mathbb{V}} = \sup_{v\in\mathbb{V},v\neq 0}\frac{|v|^2}{|v|^2+\alpha\|v\|^2} = \frac{\mathcal{P}^2}{\mathcal{P}^2+\alpha} .
\end{align}


\noindent Set $\widehat{A}:=(I+\alpha A)^{-1}A$. Then $\widehat{A}$ is a
continuous linear operator from $\mathbb{W}$ to itself, and satisfies
\begin{gather}
\label{A transform 01}(\widehat{A}u,v)_\mathbb{V}=(Au,v)=((u,v)), \quad \forall\,u\in\mathbb{W},\ v\in\mathbb{V},\\
\label{A transform 03}|\widehat{A}u|_{\mathbb{W}^*}\leq C|\widehat{A}u|_{\mathbb{V}}\leq C|u|_{\mathbb{V}},\quad\forall\,u\in\mathbb{W}.
\end{gather}


Define the bilinear operator $\widehat{B}(\cdot\,,\cdot):\ \mathbb{W}\times\mathbb{V}\rightarrow\mathbb{W}^*$ by
\begin{align}\label{definition of B hat}
\widehat{B}(u,v):=(I+\alpha A)^{-1}\mathbb{P}\big({\rm curl}(u-\alpha \Delta u)\times v\big).
\end{align}
For simplicity, we write $\widehat{B}(u):=\widehat{B}(u,u)$. The following lemma can be found in \cite{2012-Razafimandimby-p4251-4270}.
\begin{lem}\label{Lem-B-01}
\begin{align*}
&|\widehat{B}(u,v)|_{\mathbb{W}^*}\leq C|u|_\mathbb{W}|v|_\mathbb{V},\quad\forall\,u\in\mathbb{W},\ v\in\mathbb{V},\\
&|\widehat{B}(u,u)|_{\mathbb{W}^*}\leq C|u|^2_\mathbb{V},\quad\forall\,u\in\mathbb{W},\\
&\langle\widehat{B}(u,v),v\rangle=0, \quad\forall\,u, v\in\mathbb{W},\\
&\langle\widehat{B}(u,v),w\rangle=-\langle\widehat{B}(u,w),v\rangle,\quad\forall\,u, v, w\in\mathbb{W}.
\end{align*}
\end{lem}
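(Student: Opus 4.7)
The plan is to prove the four bounds in the order (iii), (iv), (i), (ii), working from the identity
$\langle\widehat B(u,v),w\rangle = (\mathrm{curl}(u-\alpha\Delta u)\times v,\,w)$ valid for every $w\in\mathbb{W}\subset\mathbb{V}$. This identity follows from (\ref{inverse operator transform}) together with the fact that $w$ is divergence-free, so the Helmholtz projector $\mathbb{P}$ disappears inside the $\mathbb{L}^{2}$-pairing with $w$. Here $\mathrm{curl}(u-\alpha\Delta u)\times v$ stands for the two-dimensional ``scalar-times-perpendicular-vector'' product, i.e.\ $\mathrm{curl}(u-\alpha\Delta u)\,v^{\perp}$ with $v^{\perp}=(-v_{2},v_{1})$.

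Claim (iii) is then immediate from the pointwise identity $v^{\perp}\cdot v=0$. Claim (iv) follows by polarising (iii): expanding $0=\langle\widehat B(u,v+w),v+w\rangle$ produces the antisymmetry in the last two slots. For (i) I would bound the integral by Cauchy--Schwarz and H\"older, controlling $|\mathrm{curl}(u-\alpha\Delta u)|_{L^{2}}$ by $|u|_{\mathbb{W}}$ through (\ref{W equivalence norm}) and controlling $|v\cdot w|_{L^{2}}$ through the two-dimensional Sobolev embedding $\mathbb{H}^{1}\hookrightarrow\mathbb{L}^{4}$, yielding $|v|_{L^{4}}|w|_{L^{4}}\leq C|v|_{\mathbb{V}}|w|_{\mathbb{V}}\leq C|v|_{\mathbb{V}}|w|_{\mathbb{W}}$; taking the supremum over $|w|_{\mathbb{W}}\leq 1$ finishes the argument.

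The substantive step is (ii), where one cannot afford a factor of $|u|_{\mathbb{W}}$. I would first use (iv) to write $|\widehat B(u,u)|_{\mathbb{W}^{*}}=\sup_{|w|_{\mathbb{W}}\leq 1}|\langle\widehat B(u,w),u\rangle|$, and then shift all derivatives off $u-\alpha\Delta u$ and onto $w$ by successive integrations by parts. After moving $\mathrm{curl}$ over (its boundary term vanishes because $w^{\perp}\cdot u|_{\partial\mathcal{O}}=0$) one gets $\langle\widehat B(u,w),u\rangle=\int u\cdot\nabla^{\perp}(w^{\perp}\cdot u)\,dx-\alpha\int\Delta u\cdot\nabla^{\perp}(w^{\perp}\cdot u)\,dx$. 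The first integral is bounded by $C|u|_{\mathbb{V}}^{2}|w|_{\mathbb{W}}$ directly via H\"older and $\mathbb{H}^{1}\hookrightarrow\mathbb{L}^{4}$. For the $\alpha\Delta u$ piece I would exploit the two-dimensional identity $\Delta u=-\nabla^{\perp}q$ (valid for divergence-free $u$, with $q=\mathrm{curl}\,u$), expand $\nabla^{\perp}(w^{\perp}\cdot u)$ by Leibniz, and carry out further integrations by parts. The highest-order-in-$u$ contribution collapses to an integral of the form $\int q^{2}\,\mathrm{div}\,w\,dx$, which vanishes because $\mathrm{div}\,w=0$; the residual terms are then estimated using the two-dimensional embeddings $\mathbb{H}^{2}\hookrightarrow\mathbb{L}^{\infty}$ and $\mathbb{H}^{3}\hookrightarrow\mathbb{W}^{1,\infty}$ to absorb the extra derivatives onto $w\in\mathbb{W}$, producing the desired bound $C|u|_{\mathbb{V}}^{2}|w|_{\mathbb{W}}$.

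The hard part is precisely this last sequence of integrations by parts in (ii): one must route every derivative onto $w$ without ever introducing a factor of $|u|_{\mathbb{H}^{2}}$, and this hinges on checking that each boundary trace truly vanishes. They do, once one notes that $u|_{\partial\mathcal{O}}=w|_{\partial\mathcal{O}}=0$ together with $\mathrm{div}\,u=\mathrm{div}\,w=0$ forces in particular $\partial_{n}(w^{\perp}\cdot u)|_{\partial\mathcal{O}}=0$, and on spotting the cancellation in the top-order term provided by $\mathrm{div}\,w=0$; once these are in place the remaining interior estimates are routine applications of H\"older combined with the 2D Sobolev embeddings.
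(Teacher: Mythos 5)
The paper itself does not prove this lemma: immediately after (\ref{definition of B hat}) it writes ``The following lemma can be found in \cite{2012-Razafimandimby-p4251-4270}'' and simply cites Razafimandimby--Sango, so there is no in-paper argument to compare against. Judged on its own terms, your sketch is correct. The starting identity $\langle\widehat B(u,v),w\rangle=\int_{\mathcal O}\mathrm{curl}(u-\alpha\Delta u)\,(v^{\perp}\cdot w)\,dx$ follows from (\ref{inverse operator transform}) and self-adjointness of the Leray projector, and (iii), (iv) are exactly the pointwise orthogonality plus polarisation that you describe. Your (i) is the routine Cauchy--Schwarz/$\mathbb{H}^{1}\hookrightarrow\mathbb{L}^{4}$ estimate. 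The one genuinely delicate point is (ii), and your route is the right one: reduce via (iv) to $\langle\widehat B(u,w),u\rangle$, peel off the $\mathrm{curl}$ with the boundary term killed by $u|_{\partial\mathcal O}=0$, and in the $\alpha$-piece replace $\Delta u$ by $\pm\nabla^{\perp}\mathrm{curl}\,u$ (the sign depends only on the orientation convention for $\nabla^{\perp}$; it is immaterial). Writing $q=\mathrm{curl}\,u$ and expanding by Leibniz, the only term carrying two derivatives on $u$ is $\int q\,w\cdot\nabla q\,dx=\tfrac12\int w\cdot\nabla(q^{2})\,dx$, which vanishes by $\mathrm{div}\,w=0$ and $w|_{\partial\mathcal O}=0$; all remaining terms are of the form ($q$ or $\nabla u$ or $u$) times derivatives of $w$, and the 2D embeddings $\mathbb{H}^{3}\hookrightarrow\mathbb{W}^{1,\infty}$, $\mathbb{H}^{2}\hookrightarrow\mathbb{L}^{4}\cap\mathbb{L}^{\infty}$ close them at $C|u|_{\mathbb{V}}^{2}|w|_{\mathbb{W}}$. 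Two small remarks: first, when you track the boundary terms you only ever need $u|_{\partial\mathcal O}=0$ or $w|_{\partial\mathcal O}=0$ in each step (each surface integrand carries a factor of $u$ or $w^{\perp}$), so the stronger observation $\partial_{n}(w^{\perp}\cdot u)|_{\partial\mathcal O}=0$ is true but not strictly necessary once you substitute $\Delta u=\pm\nabla^{\perp}q$; second, in the Leibniz expansion the piece $\int q\,(\Delta w^{\perp})\cdot u\,dx$ needs the H\"older split $|q|_{L^{2}}|\Delta w|_{L^{4}}|u|_{L^{4}}$ rather than putting $\Delta w$ in $L^{\infty}$, since $w\in\mathbb{H}^{3}$ only gives $\Delta w\in\mathbb{H}^{1}\hookrightarrow\mathbb{L}^{4}$. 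With that bookkeeping your argument is complete.
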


From the above lemma, the following estimate can be easily derived,
\begin{align}\label{add 0408.1}
  |\widehat{B}(u,v)|_{\mathbb{V}}\leq C|u|_\mathbb{W}|v|_\mathbb{W},\quad\forall\,u, v\in\mathbb{W}.
\end{align}
%
%
If $u\in\mathbb{W}\cap \mathbb{H}^4(\mathcal{O})$ and $v\in\mathbb{W}$, then $\widehat{B}(u,v)\in\mathbb{W}$. By (\ref{curlP=curl}) and (\ref{W equivalence norm}), we have for $w\in\mathbb{W}$,
\begin{align}\label{Eq B-05}
\begin{aligned}
\big(\widehat{B}(u,v),w\big)_{\mathbb{W}}
=&\Big({\rm curl}\big[\mathbb{P}\big({\rm curl}\big(u-\alpha\Delta u\big)\times v\big)\big],{\rm curl}\big(w-\alpha\Delta w\big)\Big) \\
=&\Big({\rm curl}\big[{\rm curl}\big(u-\alpha\Delta u\big)\times v\big],{\rm curl}\big(w-\alpha\Delta w\big)\Big)\\
=& \Big(\big(v\cdot\nabla\big){\rm curl}\big(u-\alpha\Delta u\big),{\rm curl}\big(w-\alpha\Delta w\big)\Big).
\end{aligned}
\end{align}

\noindent In particular, we have the following new identity, which is very useful to estimates of $\mathbb{W}$-norm.
\begin{align}\label{Eq B-06}
\begin{aligned}
\big(\widehat{B}(u,v),u\big)_{\mathbb{W}}
=&\Big(\big(v\cdot\nabla\big){\rm curl}\big(u-\alpha\Delta u\big),{\rm curl}\big(u-\alpha\Delta u\big)\Big) \\ =& 0, \quad \forall\,u\in\mathbb{W}\cap \mathbb{H}^4(\mathcal{O}),\, v\in\mathbb{W}.
\end{aligned}
\end{align}

Next, we introduce some preliminaries about random dynamical systems, see \cite{1998-Arnold-p586-586,1997-Crauel-p307-341,1994-Crauel-p365-393,1996-Flandoli-p21-45} for more details.
Throughout this paper, we take the complete probability space $(\Omega,\mathcal{F},P)$ as that
$\Omega:=\{\omega\in C(\mathbb{R};\mathbb{R})|\omega(0)=0\}$ is equipped with the compact open topology, $\mathcal{F}$ is the completion of the Borel $\sigma$-algebra of $\Omega$ with respect to the Wiener measure $P$.
We identify the one-dimensional two-sided Brownian motion $W$ with the canonical Wiener process, i.e.
\begin{align}
  W(t,\omega)=\omega(t), \quad t\in\mathbb{R}, \omega\in\Omega.
\end{align}
Let $\theta:\mathbb{R}\times\Omega\rightarrow\Omega$
be the Wiener shift on $(\Omega,\mathcal{F},P)$, i.e. $\theta(t,\omega)(s):=\omega(t+s)-\omega(t)$, $t,s\in\mathbb{R},\ \omega\in\Omega$.
Let $(X,\|\cdot\|_{X})$ be a separable Hilbert space with the Borel $\sigma$-algebra $\mathcal{B}(X)$.

%
%

\begin{dfn}
  A continuous random dynamical system or continuous perfect cocycle on $X$ is a $\mathcal{B}(\mathbb{R}^+)\otimes\mathcal{B}(X)\otimes\mathcal{F}/\mathcal{B}(X)$ measurable map $\varphi:\mathbb{R}^+\times X\times\Omega\rightarrow X,\quad (t,x,\omega)\mapsto \varphi(t,x,\omega)$, and for all $\omega\in\Omega$,
  \begin{itemize}
    \item [(i)]  $\varphi(0,x,\omega)=x ,\quad\forall\, x\in X$.
    \item [(ii)]  (cocycle property) $\varphi(t,\varphi(s,x,\omega),\theta(s,\omega))=\varphi(t+s,x,\omega), \quad\forall\,t,s\geq 0,x\in X$.
    \item [(iii)]  The map $\mathbb{R}^{+}\times X\ni (t,x)\mapsto \varphi(t,x,\omega)\in X$ is continuous.
  \end{itemize}
\end{dfn}


For nonempty sets $A,B\in 2^X$, we set
\begin{align}\label{add 0410.1}
  d(A,B):=\sup_{x\in A}\inf_{y\in B}\|x-y\|_X, \quad d(x,B):=d(\{x\},B),\quad d(B):=\sup_{x\in B}\|x\|_X,
\end{align}
here $d(A,B)$ is called the Hausdorff semi-metric between $A$ and $B$.

\begin{dfn} Let $\varphi$ be a random dynamical system on $X$.
  \item [(1)]  A set-valued map $B:\Omega\rightarrow 2^X\backslash\emptyset$ is said to be a random set if the map $\omega\mapsto d(x,B(\omega))$ is measurable for any $x\in X$.
      If $B$ is a random set, and $B(\omega)$ is closed (compact) for a.e. $\omega\in\Omega$, then $B$ is called a random closed (compact) set.
      A random set $B$ is said to be bounded if there exists a random variable $R(\omega)\geq 0$ such that for a.e. $\omega\in\Omega$,
      \begin{align*}
         B(\omega)\subset\{x\in X: \|x\|_X\leq R(\omega)\}.
      \end{align*}
  \item [(2)]  A random set $B$ is called tempered if for a.e. $\omega\in\Omega$,
      \[\lim_{t\rightarrow +\infty} e^{-\beta t}d(B(\theta(-t,\omega)))=0, \quad \forall\, \beta>0.\]
  \item [(3)]  Let $A,B$ be random sets. $A$ is said to absorb $B$ if for a.e. $\omega\in\Omega$, there exists an absorption time $t_B(w)$ such that for any $t\geq t_B(\omega)$, $\varphi(t,B(\theta(-t,\omega)),\theta(-t,\omega))\subset A(\omega)$. $A$ is said to attract $B$ if a.e. $\omega\in\Omega$,
      \begin{align}
        d\Big(\varphi\big(t,B(\theta(-t,\omega)),\theta(-t,\omega)\big),A(\omega)\Big)\rightarrow 0, \quad  \text{ as } t\rightarrow\infty .
      \end{align}
  \item [(4)]  A random set $G$ is called a random absorbing set, if it absorbs every bounded tempered random set of $X$.
  \item [(5)]  $\varphi$  is said to be asymptotically compact in $X$ if for any bounded tempered random set $D$,  for a.e. $\omega\in\Omega$, $\{\varphi(t_n,x_n,\theta(-t_n,\omega))\}_{n=1}^{\infty}$ has a convergent subsequence in $X$ whenever $t_n\rightarrow\infty$, and $x_n\in D(\theta(-t_n,\omega))$.
  \item [(6)]  A random compact set $A(\omega)$ is said to be the random attractor of $\varphi$ in $X$ if it attracts every bounded tempered random set of $X$ and $\varphi(t,A(\omega),\omega)=A(\theta(t,\omega))$ for a.e. $\omega\in\Omega$ and all $t\geq0$.
\end{dfn}


The following theorem is an important result to obtain the existence of random attractors, see Proposition 4.1 in \cite{2006-Bates-p1-21} or Theorem 7 in \cite{2006-Caraballo-p484-498}.

\begin{lem}\label{randomattractor}
 Let $\varphi$ be a continuous random dynamical system of $X$, if $\varphi$ is asymptotically compact, and there exists a closed bounded tempered random absorbing set, then $\varphi$ has a unique random attractor in $X$.
\end{lem}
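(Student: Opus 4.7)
The plan is to construct the random attractor as the $\omega$-limit set of the absorbing set, then verify compactness, invariance, attraction, measurability, and finally uniqueness. This is the standard Crauel--Flandoli framework adapted to the asymptotically compact (non-compact-absorbing) setting of Bates--Lu--Wang.

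Let $B$ be the given closed bounded tempered random absorbing set. First I would define, for a.e. $\omega\in\Omega$,
\begin{equation*}
A(\omega) := \bigcap_{T\geq 0}\overline{\bigcup_{t\geq T}\varphi\bigl(t,B(\theta(-t,\omega)),\theta(-t,\omega)\bigr)}^{\,X}.
\end{equation*}
The first step is to verify that $A(\omega)$ is non-empty and compact: take any sequences $t_n\to\infty$ and $x_n\in B(\theta(-t_n,\omega))$; by asymptotic compactness the sequence $\{\varphi(t_n,x_n,\theta(-t_n,\omega))\}$ has a convergent subsequence whose limit lies in $A(\omega)$, which simultaneously shows non-emptiness and, by a diagonal argument on $T\to\infty$, that every sequence in $A(\omega)$ itself has a convergent subsequence with limit in $A(\omega)$.

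Second, I would prove the attraction property. Suppose, for contradiction, that for some bounded tempered random set $D$ there exist $\delta>0$, $t_n\to\infty$ and $x_n\in D(\theta(-t_n,\omega))$ with $d(\varphi(t_n,x_n,\theta(-t_n,\omega)),A(\omega))\geq\delta$. Since $B$ absorbs $D$, for $n$ large we can write $t_n=s_n+\tau_n$ with $\tau_n\to\infty$ and $\varphi(s_n,x_n,\theta(-t_n,\omega))\in B(\theta(-\tau_n,\omega))$; using the cocycle property,
\begin{equation*}
\varphi(t_n,x_n,\theta(-t_n,\omega)) = \varphi\bigl(\tau_n,\varphi(s_n,x_n,\theta(-t_n,\omega)),\theta(-\tau_n,\omega)\bigr),
\end{equation*}
which, by asymptotic compactness, has a subsequence converging to a point of $A(\omega)$, contradicting the lower bound $\delta$. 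Third, for invariance, the inclusion $\varphi(t,A(\omega),\omega)\subset A(\theta(t,\omega))$ follows from the continuity of $\varphi(t,\cdot,\omega)$ and the cocycle identity applied to sequences defining points of $A(\omega)$. The reverse inclusion $A(\theta(t,\omega))\subset\varphi(t,A(\omega),\omega)$ is the delicate step: given $y\in A(\theta(t,\omega))$, pick $t_n\to\infty$ and $x_n\in B(\theta(-t_n+t,\omega))$ with $\varphi(t_n,x_n,\theta(-t_n+t,\omega))\to y$, then rewrite $\varphi(t_n,x_n,\cdot)=\varphi(t,\varphi(t_n-t,x_n,\cdot),\theta(t_n-t,\cdot))$ shifted appropriately; asymptotic compactness extracts a subsequence of $\varphi(t_n-t,x_n,\theta(-(t_n-t),\omega))$ converging to some $z\in A(\omega)$, and continuity of $\varphi(t,\cdot,\omega)$ yields $\varphi(t,z,\omega)=y$.

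Fourth, I would establish measurability: the map $\omega\mapsto d(x,A(\omega))$ is measurable for each $x\in X$. Since the absorbing set is closed and bounded and the cocycle is measurable in $\omega$ and continuous in the state, $A(\omega)$ coincides with the $\omega$-limit set of a measurable family; measurability then follows from the standard Crauel-type characterization of random closed sets in a separable Hilbert space (the countable dense subset of $X$ reduces everything to countably many measurable operations of intersection and closure of unions of measurable sets). Finally, uniqueness: if $A'$ is another random attractor, then by invariance $A'(\omega)=\varphi(t,A'(\theta(-t,\omega)),\theta(-t,\omega))$, and since $A'$ is compact hence tempered, $A$ attracts it and vice versa; taking $t\to\infty$ in both directions yields $A(\omega)=A'(\omega)$ a.s.

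The main obstacle I expect is the ``$\supset$'' direction of invariance combined with measurability: both require careful use of asymptotic compactness together with the cocycle identity rather than of a compact absorbing set, and measurability of $\omega$-limit sets in the merely asymptotically compact case typically needs Crauel's characterization of random closed sets in separable metric spaces, which must be invoked correctly because the union defining $A(\omega)$ is over an uncountable parameter.
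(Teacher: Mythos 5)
The paper does not prove this lemma at all; it is quoted directly from the literature, citing Proposition~4.1 of Bates--Lisei--Lu \cite{2006-Bates-p1-21} and Theorem~7 of Caraballo--{\L}ukaszewicz--Real \cite{2006-Caraballo-p484-498}, so there is no internal proof to compare your outline against. Your proposal reproduces the standard Crauel--Flandoli omega-limit-set construction adapted to the asymptotically compact setting, which is exactly the route taken in those references.

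One step in your attraction argument needs tightening. You write $t_n = s_n + \tau_n$ with $\tau_n\to\infty$ and claim $\varphi(s_n,x_n,\theta(-t_n,\omega))\in B(\theta(-\tau_n,\omega))$, but this requires $s_n \geq t_B(\theta(-\tau_n,\omega))$, and the absorption time $\tau\mapsto t_B(\theta(-\tau,\omega))$ is only measurable and may grow without control, so you cannot freely send both $s_n\to\infty$ and $\tau_n\to\infty$. The cleaner argument applies asymptotic compactness once, directly to $\{\varphi(t_n,x_n,\theta(-t_n,\omega))\}$ with $x_n\in D(\theta(-t_n,\omega))$, to extract a limit $y$; then, for each \emph{fixed} $T>0$, the cocycle identity with the fixed split $t_n=(t_n-T)+T$ gives
\begin{equation*}
\varphi(t_n,x_n,\theta(-t_n,\omega))=\varphi\bigl(T,\varphi(t_n-T,x_n,\theta(-(t_n-T),\theta(-T,\omega))),\theta(-T,\omega)\bigr),
\end{equation*}
and for $n$ large one has $t_n-T\geq t_B(\theta(-T,\omega))$, so the inner point lies in $B(\theta(-T,\omega))$, placing $\varphi(t_n,x_n,\theta(-t_n,\omega))$ in $\bigcup_{t\geq T}\varphi(t,B(\theta(-t,\omega)),\theta(-t,\omega))$; passing to the limit shows $y\in A(\omega)$ and yields the contradiction. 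The remaining steps of your outline (compactness, both directions of invariance, measurability via Crauel's characterization of random closed sets, and uniqueness) agree with the cited proofs.
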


Finally, we present an abstract result for establishing the upper semi-continuity of a family of random attractors, see Theorem 3.1 in \cite{2009-Wang-p1-18}.
\begin{lem}\label{add 0409.8}
  Let $\{\varphi^\epsilon\}_{0<|\epsilon|< 1}$ be a family of continuous random dynamical system of $X$ with the random attractors $\{\mathcal{A}^{\epsilon}\}_{0<|\epsilon|< 1}$ and random absorbing sets $\{G^{\epsilon}\}_{0<|\epsilon|< 1}$, $\varphi_0$ is a deterministic continuous dynamical system of $X$ with the attractor $\mathcal{A}^0$(i.e. compact and invariant and attract every bounded set of $X$). Assume that the following conditions are satisfied.
\begin{itemize}
  \item[(i)] For any $t\geq 0$ and a.e. $\omega\in\Omega$, $\varphi^{\epsilon_n}(t,x_n,\omega)\rightarrow\varphi^0(t,x)$ as $n\rightarrow\infty$, whenever $\epsilon_n\rightarrow 0$ and $x_n\rightarrow x$ in $X$.
  \item[(ii)] There exists a deterministic constant $C$ such that
    $\limsup_{\epsilon\rightarrow 0} d(G^{\epsilon}(\omega))\leq C$ for a.e. $\omega\in\Omega$ .
  \item[(iii)] There exists $\epsilon_0>0$ such that for a.e. $\omega\in\Omega$, the union
$\bigcup_{0<|\epsilon|<\epsilon_0}\mathcal{A}^{\epsilon}(\omega)$ is precompact in  X.
\end{itemize}
Then for a.e. $\omega\in\Omega$, $d(\mathcal{A}^{\epsilon}(\omega),\mathcal{A}^0)\rightarrow 0$ as $\epsilon\rightarrow 0$, here $d$ is the Hausdorff semi-metric. In this case, we say that the family of random attractors $\{\mathcal{A}^{\epsilon}\}_{0<|\epsilon|<1}$ is upper semi-continuous as $\epsilon\rightarrow 0$.

\end{lem}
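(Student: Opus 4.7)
The plan is to argue by contradiction, combining the invariance of each perturbed attractor $\mathcal{A}^{\epsilon}$ with the relative compactness from condition (iii) to realize any limit point of a hypothetical ``bad'' sequence as a forward image $\varphi^0(k,y_k)$ of a uniformly bounded preimage $y_k$. The attraction of $\mathcal{A}^0$ for bounded sets then forces the limit point into $\mathcal{A}^0$, contradicting its bounded-away-from-$\mathcal{A}^0$ construction.

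I would fix $\omega$ in a full-measure set on which (ii), (iii) hold, each $\mathcal{A}^{\epsilon}$ is compact and invariant, and $\mathcal{A}^{\epsilon}(\omega)\subset G^{\epsilon}(\omega)$. The last inclusion follows from the pullback invariance of $\mathcal{A}^{\epsilon}$ together with the fact that the random absorbing set $G^{\epsilon}$ absorbs the tempered random compact set $\mathcal{A}^{\epsilon}$. Suppose, for contradiction, that $d(\mathcal{A}^{\epsilon_n}(\omega),\mathcal{A}^0)\not\to 0$ along some $\epsilon_n\to 0$. Then there exist $\delta>0$ and $x_n\in\mathcal{A}^{\epsilon_n}(\omega)$ with $d(x_n,\mathcal{A}^0)\geq\delta$. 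Since $\{x_n\}\subset\bigcup_{0<|\epsilon|<\epsilon_0}\mathcal{A}^{\epsilon}(\omega)$, condition (iii) provides a subsequence (still labelled $x_n$) converging to some $x_0\in X$ with $d(x_0,\mathcal{A}^0)\geq\delta$.

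Next I pull the trajectories backward and diagonalize. For each integer $k\geq 1$, invariance gives $y_{n,k}\in\mathcal{A}^{\epsilon_n}(\theta(-k,\omega))$ with $x_n=\varphi^{\epsilon_n}(k,y_{n,k},\theta(-k,\omega))$. Applying condition (iii) at $\theta(-k,\omega)$ (which holds off a common null set by taking a countable intersection over $k\in\mathbb{N}$) yields a relatively compact family $\{y_{n,k}\}_n$, and a standard diagonal extraction produces a single subsequence along which $y_{n,k}\to y_k$ for every $k$. Condition (i) then lets me pass to the limit in $n$ inside $\varphi^{\epsilon_n}(k,y_{n,k},\theta(-k,\omega))$ to conclude $x_0=\varphi^0(k,y_k)$. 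Meanwhile, the inclusion $\mathcal{A}^{\epsilon_n}(\theta(-k,\omega))\subset G^{\epsilon_n}(\theta(-k,\omega))$ combined with (ii) bounds $\|y_{n,k}\|_X\leq C+1$ uniformly in $n$ (for $n$ large) and, crucially, uniformly in $k$, because the constant $C$ in (ii) is deterministic and does not depend on the sample point. Hence $\|y_k\|_X\leq C+1$ for all $k$, so $y_k\in B:=\{x\in X:\|x\|_X\leq C+1\}$. Attraction of $\mathcal{A}^0$ for the bounded set $B$ gives $d(\varphi^0(k,B),\mathcal{A}^0)\to 0$ as $k\to\infty$, whence $d(x_0,\mathcal{A}^0)=\lim_k d(\varphi^0(k,y_k),\mathcal{A}^0)=0$, the desired contradiction.

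The main obstacle I expect is the uniform-in-$k$ boundedness of the preimages $y_k$. This hinges on the inclusion $\mathcal{A}^{\epsilon}\subset G^{\epsilon}$ a.s., which in turn relies on the temperedness of $\mathcal{A}^{\epsilon}$ and the pullback nature of absorption; once secured, condition (ii) supplies a bound independent of the shifted sample point $\theta(-k,\omega)$, and the rest reduces to a routine diagonal-plus-continuity argument from conditions (i) and (iii). A secondary technicality is arranging all countably many sub-extractions and the shifts $\theta(-k,\cdot)$ on a single full-measure subset of $\Omega$, which is handled by countable intersection over $k\in\mathbb{N}$ and the Wiener-shift invariance of $P$.
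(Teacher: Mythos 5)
The paper does not prove this lemma itself; it cites Theorem~3.1 of \cite{2009-Wang-p1-18} as a black box. Your blind reconstruction is correct and follows the same standard argument used in that reference: argue by contradiction, extract a convergent subsequence $x_n\to x_0$ from the perturbed attractors via (iii), pull back by invariance to $y_{n,k}\in\mathcal{A}^{\epsilon_n}(\theta(-k,\omega))$, diagonalize to get limits $y_k$ with $x_0=\varphi^0(k,y_k)$ via (i), bound the $y_k$ uniformly in $k$ by combining $\mathcal{A}^{\epsilon}\subset G^{\epsilon}$ with the deterministic constant in (ii), and then invoke the attraction of $\mathcal{A}^0$ on bounded sets to force $x_0\in\mathcal{A}^0$. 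Your explicit handling of the measure-zero exceptional sets by countable intersection over $k$ and the careful observation that the bound $\|y_k\|_X\le C+1$ is uniform in $k$ precisely because $C$ in (ii) is deterministic are the two points where a careless argument would break down, and you got both right.
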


\section{Hypotheses and main results}

In this section, we will formulate precise assumptions on the external force $F$,
and state the main results of this paper.

Let $F:\mathbb{V}\rightarrow\mathbb{V}$
be a given measurable map. We introduce the following Lipschitz condition of $F$:

\noindent {\bf(F1)} There is a constant $C_F$ such that
\begin{align}\label{add 0321.1}
|F(u_1)-F(u_2)|_\mathbb{V}\leq C_F|u_1-u_2|_\mathbb{V},\quad\forall\,u_1,u_2\in\mathbb{V}.
\end{align}

Set
\[
\widehat{F}(u):=(I+\alpha A)^{-1} F(u).
\]
Applying $(I+\alpha A)^{-1}$ to each term of the equation (\ref{1.a}), we can rewrite it in the following abstract form:
\begin{align}\label{Abstract}
\left\{
\begin{aligned}
& du(t)+\nu \widehat{A}u(t)dt+\widehat{B}\big(u(t),u(t)\big)dt=\widehat{F}\big(u(t)\big)dt+ \epsilon u(t) \circ dW(t),\quad t> 0\\
& u(0)=f .
\end{aligned}
\right.
\end{align}

Let $\{\mathcal{F}_t\}_{t\geq 0}$ be the augmented natural filtration of the Brownian motion $\{W(t): t\geq 0\}$.
\begin{dfn}\label{Def 01}
A $\mathbb{V}$-valued continuous and $\mathbb{W}$-valued weakly continuous $\{\mathcal{F}_t\}_{t\geq 0}$-adapted stochastic process $u$ is called a solution of the system (\ref{1.a}), if the following conditions hold:

\noindent (1) $u\in L^2(\Omega\times[0,T];\mathbb{W})$, for any $T>0$ .

\noindent (2) for any $t\geq 0$, the following equation holds in $\mathbb{W}^*$ $P$-a.s.:
\begin{align*}
u(t)+\nu\int_0^t\widehat{A}u(r)dr+\int_0^t \widehat{B}\big(u(r),u(r)\big)dr
=
f+\int_0^t\widehat{F}\big(u(r)\big)dr+\epsilon\int_0^t u(r) \circ  dW(r).
\end{align*}
\end{dfn}


With a minor modification of Theorem 3.2 in \cite{2017-Shang-p-}(or see \cite{2012-Razafimandimby-p4251-4270}), we have the following theorem.
\begin{thm}\label{thm existunique}
Assume {\bf(F1)} and $f\in\mathbb{W}$. Then there exists a unique solution to (\ref{Abstract}).
\end{thm}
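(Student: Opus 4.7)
The plan is to carry out a Galerkin approximation based on the orthonormal basis $\{e_i\}_{i=1}^\infty$ of $\mathbb{W}$ from (\ref{Basis}), projecting (\ref{Abstract}) onto $\mathbb{W}_n := \mathrm{span}\{e_1,\dots,e_n\}$. Since $\widehat{A}$ is a continuous linear operator on $\mathbb{W}$, $\widehat{B}$ is bilinear and bounded on $\mathbb{W}\times\mathbb{W}$ into $\mathbb{V}$ by (\ref{add 0408.1}), and $\widehat{F}$ is globally Lipschitz on $\mathbb{V}$ by \textbf{(F1)} combined with (\ref{add 0329.2}), the projected equation is a finite-dimensional Stratonovich SDE with locally Lipschitz coefficients and linear multiplicative noise, so classical SDE theory yields a unique local strong solution $u^n$; global existence on $[0,\infty)$ will follow from the a priori estimates below.

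For the a priori bounds, I would first convert the noise $\epsilon u^n\circ dW$ to Itô form, incorporating the correction $\tfrac{1}{2}\epsilon^2 u^n\,dt$, and apply Itô's formula to $|u^n(t)|_\mathbb{V}^2$. Using $(\widehat{A}u^n,u^n)_\mathbb{V}=\|u^n\|^2$ from (\ref{A transform 01}), the cancellation $\langle \widehat{B}(u^n,u^n),u^n\rangle=0$ of Lemma \ref{Lem-B-01}, the Lipschitz bound on $\widehat{F}$, and a Burkholder--Davis--Gundy estimate for the martingale part, I would obtain
\begin{align*}
\mathbb{E}\!\left[\sup_{0\leq t\leq T}|u^n(t)|_\mathbb{V}^2\right] + 2\nu\,\mathbb{E}\!\int_0^T\!\|u^n(s)\|^2\,ds \leq C(T,\epsilon,|f|_\mathbb{V}).
\end{align*}
The key and nontrivial step is the $\mathbb{W}$-level analogue. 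Because $e_i\in \mathbb{H}^4(\mathcal{O})$, we have $u^n(t)\in \mathbb{W}\cap \mathbb{H}^4(\mathcal{O})$, so the new identity (\ref{Eq B-06}) gives $(\widehat{B}(u^n,u^n),u^n)_{\mathbb{W}}=0$; applying Itô's formula to $|u^n(t)|_\mathbb{W}^2$ then produces no uncontrolled cubic term from the curl-nonlinearity, and handling the remaining linear-in-$u^n$ terms together with BDG yields $\mathbb{E}[\sup_{t\leq T}|u^n(t)|_\mathbb{W}^2]\leq C(T,\epsilon,|f|_\mathbb{W})$, which also rules out blow-up and gives global existence of the Galerkin solutions.

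With these uniform bounds, I would extract a subsequence converging weakly-$\star$ in $L^2(\Omega;L^\infty(0,T;\mathbb{W}))$, weakly in $L^2(\Omega\times[0,T];\mathbb{W})$, and, via a stochastic Aubin--Lions-type compactness argument in the spirit of Razafimandimby--Sango, strongly in $L^2(\Omega\times[0,T];\mathbb{V})$; this strong convergence is enough to identify the weak limit of $\widehat{B}(u^n,u^n)$, pass to the limit in the projected equation, and recover a probabilistically strong solution in the sense of Definition \ref{Def 01}, with $\mathbb{V}$-continuity and $\mathbb{W}$-weak continuity following from the equation plus a standard Strauss-type argument. For uniqueness, given two solutions $u_1,u_2$ and setting $w:=u_1-u_2$, I would apply Itô's formula to $|w|_\mathbb{V}^2$, exploit bilinearity $\widehat{B}(u_1)-\widehat{B}(u_2)=\widehat{B}(w,u_1)+\widehat{B}(u_2,w)$ with the orthogonality and boundedness of $\widehat{B}$ from Lemma \ref{Lem-B-01}, and obtain an inequality of the form $d|w|_\mathbb{V}^2 \leq C(1+|u_1|_\mathbb{W}+|u_2|_\mathbb{W})|w|_\mathbb{V}^2\,dt + 2\epsilon|w|_\mathbb{V}^2\,dW$, closing the argument by a stochastic Gronwall lemma since both $|u_i|_\mathbb{W}^2$ are path-wise integrable. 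The main obstacle throughout is the $\mathbb{W}$-estimate: without the cancellation (\ref{Eq B-06}), the third-order curl-term in (\ref{1.a}) would dominate, so the choice of a basis in $\mathbb{H}^4\cap\mathbb{W}$ at (\ref{Basis}) is essential to keep the approximations inside the regime where that cancellation is available.
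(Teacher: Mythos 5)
Your proposal is a reasonable reconstruction, but it is worth noting that the paper does not actually prove Theorem \ref{thm existunique}; it cites Theorem 3.2 of \cite{2017-Shang-p-} and \cite{2012-Razafimandimby-p4251-4270}. Your plan aligns closely with those references and with what the paper itself carries out in Section 4 for the pathwise random PDE (Lemma \ref{Lemma 6.I} and Proposition \ref{5.I}): Galerkin projection onto $\mathbb{V}_n\subset\mathbb{W}\cap\mathbb{H}^4(\mathcal{O})$, the $\mathbb{V}$-estimate via $\langle\widehat{B}(u^n),u^n\rangle=0$, the crucial $\mathbb{W}$-estimate via the new cancellation (\ref{Eq B-06}) (which is indeed the whole point of choosing an $\mathbb{H}^4$-valued basis), and uniqueness via $\widehat{B}(u_1)-\widehat{B}(u_2)=\widehat{B}(w,u_1)+\widehat{B}(u_2,w)$ together with a Gronwall argument against the pathwise-integrable quantity $|u_1(\cdot)|_\mathbb{W}$. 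All of this is correct and is the right mechanism.

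The one step that is stated too loosely is the passage to the limit: uniform moment bounds cannot give strong compactness in $L^2(\Omega\times[0,T];\mathbb{V})$, since there is no compact embedding in the $\omega$-variable. In this stochastic setting you need either (a) tightness of the laws in a suitable path space, Skorokhod representation to get a martingale solution, and then the Yamada--Watanabe theorem using the pathwise uniqueness you already established; or (b) a direct proof that the Galerkin sequence $\{u^n\}$ is Cauchy in $L^2\big(\Omega;C([0,T];\mathbb{V})\big)$ by subtracting two levels $u^n-u^m$, using the same bilinear decomposition and the uniform $\mathbb{W}$-bounds as Gronwall weights, and BDG for the martingale difference. The references you cite carry out one of these; it should be spelled out rather than labeled an ``Aubin--Lions-type'' argument. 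Alternatively, one could sidestep the issue entirely by performing the conjugation $u=\mathrm{e}^{\epsilon W}v$ first, as the paper does in Section 4, which turns (\ref{Abstract}) into a deterministic evolution with random coefficients so that the ordinary Aubin--Lions and Temam's Chapter~III lemmas apply $\omega$-by-$\omega$; strong $\mathbb{V}$-continuity and weak $\mathbb{W}$-continuity then follow pathwise exactly as in Proposition \ref{5.I}, and one recovers the adapted solution by multiplying back.
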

In the deterministic case(i.e. $\epsilon=0$) of (\ref{Abstract}),
the following lemma is immediately deduced from Theorem 5.6 in \cite{1997-Cioranescu-p317-335} and Lemma
\uppercase\expandafter{\romannumeral3}.1.4 in \cite{1977-Temam-p500-500}.
\begin{lem}\label{Regularity}
Let $\epsilon=0$ in (\ref{Abstract}). Assume {\bf(F1)} and $f\in\mathbb{W}\cap \mathbb{H}^4(\mathcal{O})$. Then for any $T\geq 0$, the solution $u\in L^\infty\big([0,T];{\mathbb{H}}^4(\mathcal{O})\big)$. Moreover, $u(t)\in\mathbb{H}^4(\mathcal{O})$ for each $t\geq 0$, and
\begin{align}\label{Regularity estimate}
\sup_{t\in[0,T]}|u(t)|_{{\mathbb{H}}^4(\mathcal{O})}\leq C(T,|f|_{{\mathbb{H}}^4(\mathcal{O})}),\quad\forall\, T>0.
\end{align}
\end{lem}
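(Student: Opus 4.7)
The plan is to use the lemma as essentially a black-box consequence of two pre-existing results, one deterministic theorem for second grade fluids (Cioranescu–Girault, Thm 5.6) and one Stokes regularity result (Temam, Lem III.1.4). The first provides propagation of Sobolev regularity for the second grade fluid system in an $\mathbb{H}^m$–scale given a sufficiently smooth boundary (here $\mathcal{C}^{3,1}$ is enough to reach $m=4$), while the second supplies the elliptic regularity of $(I+\alpha A)^{-1}$ needed inside that proof to convert $\mathbb{H}^m$ estimates on $(I-\alpha\Delta)u$ (equivalently on the modified vorticity $\mathrm{curl}(u-\alpha\Delta u)$) into $\mathbb{H}^{m+2}$ estimates on $u$ itself.

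The concrete steps I would take: first, invoke Theorem \ref{thm existunique} with $\epsilon=0$ to obtain a unique solution $u\in L^{\infty}([0,T];\mathbb{W})$; in particular $u\in L^{\infty}([0,T];\mathbb{H}^3(\mathcal{O}))$, so by the two-dimensional Sobolev embedding $\mathbb{H}^3\hookrightarrow \mathcal{C}^{1}(\bar{\mathcal O})$ the velocity field $u$ is Lipschitz in space uniformly on $[0,T]$. Second, write the equation in its vorticity form: setting $\omega:=\mathrm{curl}(u-\alpha\Delta u)$ and taking $\mathrm{curl}$ of (\ref{1.a}) turns the cubic nonlinearity into the transport term $(u\cdot\nabla)\omega$, producing an equation of the shape $\partial_t\omega+(u\cdot\nabla)\omega=\nu\Delta\,\mathrm{curl}(u)+\mathrm{curl}(F(u))$ where, crucially, $\nu\Delta\,\mathrm{curl}(u)=-(\nu/\alpha)\bigl(\omega-\mathrm{curl}(u)\bigr)$ is of order zero in $\omega$. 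Third, with $f\in\mathbb{H}^4(\mathcal{O})$ the initial vorticity $\omega(0)\in \mathbb{H}^1(\mathcal{O})$; propagate this regularity along the Lipschitz flow of $u$ (which is exactly the content of Lemma III.1.4 of Temam, guaranteeing well-posedness of the characteristics and the associated $\mathbb{H}^1$-transport estimate), and apply a Gronwall argument to obtain $\omega\in L^{\infty}([0,T];\mathbb{H}^1(\mathcal O))$ with bound depending on $T$ and $|f|_{\mathbb{H}^4}$. Finally, recover $u\in L^{\infty}([0,T];\mathbb{H}^4)$ via Stokes regularity for $(I-\alpha\Delta)u$ with divergence-free and Dirichlet boundary conditions; this is precisely the $\mathbb{H}^m$-isomorphism property that Lemma III.1.4 of Temam, combined with $\partial\mathcal O\in\mathcal{C}^{3,1}$, delivers.

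The main obstacle is the mismatch between assumption $(F1)$, which controls $F$ only in the $\mathbb{V}$-norm, and the target regularity $\mathbb{H}^4$. The naive bootstrap fails because $\mathrm{curl}(F(u))$ is only in $L^2$ a priori, not in $\mathbb{H}^1$. The resolution, which is exactly what Cioranescu–Girault Thm 5.6 encodes, is that one does not need $\mathbb{H}^1$-control on $\mathrm{curl}(F(u))$ pointwise in space: because the dissipation $\Delta\mathrm{curl}(u)$ contributes a zeroth-order term in $\omega$ plus a lower-order remainder, the $\mathbb{H}^1$ transport estimate only has to absorb an $L^2$ source in the commutator $[\nabla,u\cdot\nabla]\omega=\nabla u\cdot\nabla\omega$, and this commutator is controlled by $\|u\|_{\mathbb{H}^3}\|\omega\|_{\mathbb{H}^1}$, which closes the Gronwall loop. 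Once that technical point is accepted from the cited paper, the bound (\ref{Regularity estimate}) follows immediately.
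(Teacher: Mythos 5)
The paper's own ``proof'' is just a pointer to two references, so your attempt to reconstruct the underlying argument is in the right spirit, and your identification of Cioranescu--Girault Theorem 5.6 as the workhorse supplying propagation of $\mathbb{H}^4$ regularity via the modified-vorticity transport equation is consistent with the paper. However, you have misread the role of the Temam citation. Lemma III.1.4 in Temam is not a Stokes-regularity result, nor a well-posedness-of-characteristics result, nor an ``$\mathbb{H}^m$-isomorphism'' statement; it is the purely functional-analytic fact that a function in $L^\infty([0,T];X)$ that is (weakly) continuous with values in a larger space $Y\supset X$ is in fact weakly continuous with values in $X$. The paper invokes the very same lemma in the proof of Proposition \ref{5.I} exactly in that sense. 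In Lemma \ref{Regularity} its job is to upgrade the $L^\infty([0,T];\mathbb{H}^4(\mathcal{O}))$ bound coming out of the a priori estimate to the pointwise claims ``$u(t)\in\mathbb{H}^4(\mathcal{O})$ for every $t\geq 0$'' and the genuine $\sup_{t\in[0,T]}$ in (\ref{Regularity estimate}); your sketch stops at the essential-sup bound and never supplies this step.

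Second, the ``main obstacle'' you correctly identify --- that \textbf{(F1)} only gives $F(u)\in\mathbb{V}$, so $\mathrm{curl}(F(u))$ is merely $L^2$ --- is not resolved by what you wrote. When you differentiate the transport equation for $\omega:=\mathrm{curl}(u-\alpha\Delta u)$ once in space to run the $\mathbb{H}^1$ estimate, the derivative falls not only on $u\cdot\nabla\omega$ (producing the commutator $\nabla u\cdot\nabla\omega$, which you correctly control by $\|u\|_{\mathbb{H}^3}\|\omega\|_{\mathbb{H}^1}$) but also on the source $\mathrm{curl}(F(u))$, and $\nabla\mathrm{curl}(F(u))$ lives only in $\mathbb{H}^{-1}$ under \textbf{(F1)}, not in $L^2$; integrating by parts to put that derivative back onto $\omega$ is circular because it requires $\Delta\omega\in L^2$. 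The damping $\tfrac{\nu}{\alpha}\omega$ is of order zero and cannot absorb this. So the Gronwall loop you describe does not close. Whether this signals an implicit additional regularity hypothesis on $F$ hidden in the citation, or a subtlety of Cioranescu--Girault Theorem 5.6 that your sketch does not reproduce, your argument as written has a real gap at exactly this point.
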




We also introduce the following two conditions used in this paper.

\noindent {\bf(F2)} $F$ is $C^1$, i.e.
the Fr\'{e}chet derivative $\mathbb{D}F:\mathbb{V} \rightarrow L(\mathbb{V},\mathbb{V})$ is continuous, where $L(\mathbb{V},\mathbb{V})$ denotes the Banach space of all bounded linear operators from $\mathbb{V}$ to $\mathbb{V}$, for simplicity we write $L(\mathbb{V}):=L(\mathbb{V},\mathbb{V})$.

\noindent {\bf(F3)}  $F$ is Lipschitz continuous, i.e. $F$ satisfies (\ref{add 0321.1}). Furthermore, the Lipschitz constant $C_F<\frac{\nu}{\mathcal{P}^2}$, here $\mathcal{P}$ is the $\rm Poincar\acute{e}$ constant taken from (\ref{add 0329.1}), $\nu$ is the kinematic viscosity taken from (\ref{1.a}).

%

\vskip 0.3cm



Finally, we state our main results of this paper.

\begin{thm}\label{32.I}

Assume {\bf(F1)}. Then there is a version $u:\mathbb{R}^+\times\mathbb{W}\times\Omega\rightarrow\mathbb{W}$ of solutions to (\ref{Abstract}) that generates a continuous random dynamical system.

\end{thm}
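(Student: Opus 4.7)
The plan is to kill the Stratonovich noise in (\ref{Abstract}) by the pathwise exponential transform $u(t,f,\omega)=Q(t,\omega)v(t,f,\omega)$ with $Q(t,\omega):=e^{\epsilon W(t,\omega)}$, as announced in (\ref{add 0325.1}). Applying the Stratonovich chain rule to $v=Q^{-1}u$ and using bilinearity of $\widehat{B}$, one finds that $v$ should satisfy the $\omega$-wise random PDE
\begin{equation*}
\frac{dv}{dt}+\nu\widehat{A}v+Q(t,\omega)\widehat{B}(v,v)=Q(t,\omega)^{-1}\widehat{F}\bigl(Q(t,\omega)v\bigr),\qquad v(0)=f,
\end{equation*}
which, for each fixed $\omega\in\Omega$, is a deterministic evolution equation with scalar coefficients continuous in $t$.

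For every $f\in\mathbb{W}$ and every $\omega\in\Omega$, I would first establish existence and uniqueness of a $\mathbb{V}$-continuous, $\mathbb{W}$-weakly continuous solution $v(\cdot,f,\omega)$ of the random PDE by mimicking the Galerkin argument underlying Theorem \ref{thm existunique}; the only change is that $\widehat{B}$ and $\widehat{F}$ are multiplied by bounded continuous scalars on each compact time interval, which does not disturb the a priori estimates because the cancellation identity (\ref{Eq B-06}) is unaffected. I would then set $u(t,f,\omega):=Q(t,\omega)v(t,f,\omega)$ and, via the Stratonovich/It\^o formula applied to $Q$, verify that this $u$ is an $\{\mathcal{F}_t\}$-adapted solution of (\ref{Abstract}) in the sense of Definition \ref{Def 01}; the uniqueness part of Theorem \ref{thm existunique} then ensures that $u$ is indeed a version of the SPDE solution. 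Joint measurability of $(t,f,\omega)\mapsto u(t,f,\omega)$ follows from separate continuity in $(t,f)$ (addressed below) together with measurability in $\omega$ of $Q$ and of the Galerkin solutions.

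The cocycle identity $u(t+s,f,\omega)=u(t,u(s,f,\omega),\theta(s,\omega))$ reduces to uniqueness for $v$. Using $Q(t+s,\omega)=Q(t,\theta(s,\omega))Q(s,\omega)$ and the bilinearity of $\widehat{B}$, the rescaled path $h(r):=Q(s,\omega)v(r+s,f,\omega)$ is checked to satisfy precisely the random PDE governing $v(\cdot,g,\theta(s,\omega))$ with datum $g:=Q(s,\omega)v(s,f,\omega)=u(s,f,\omega)$; hence $h(r)=v(r,u(s,f,\omega),\theta(s,\omega))$, and multiplying by $Q(r,\theta(s,\omega))$ yields the desired cocycle identity. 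The initial condition $u(0,f,\omega)=f$ is clear from $Q(0,\omega)=1$.

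The main obstacle is to upgrade the joint continuity of $(t,f)\mapsto u(t,f,\omega)$ from weak $\mathbb{W}$-continuity, which is built into Definition \ref{Def 01}, to the strong $\mathbb{W}$-continuity required by the definition of a continuous RDS on $\mathbb{W}$. Because the curl term in (\ref{1.a}) is of order three, there is no smoothing in $\mathbb{W}$ and no direct compactness in the target space. I would instead work pathwise on $v$ and appeal to the energy-equation method recorded as Lemma \ref{add 0323 lemma.1}: testing the random PDE against $v$ in the $\mathbb{W}$ inner product and exploiting (\ref{Eq B-06}) yields an identity for $|v(t,f,\omega)|_{\mathbb{W}}^2$ whose right-hand side is sequentially continuous under $(t_n,f_n)\to(t,f)$ in $\mathbb{R}^+\times\mathbb{W}$. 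Combined with the weak $\mathbb{W}$-continuity of $v(t_n,f_n,\omega)$, the resulting convergence of the $\mathbb{W}$-norms upgrades weak to strong $\mathbb{W}$-convergence uniformly on compact time intervals. Since $Q(\cdot,\omega)$ is continuous in $t$, the same strong $\mathbb{W}$-continuity transfers to $u=Qv$, and the theorem follows.
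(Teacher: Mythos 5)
Your argument follows the paper's own route essentially step for step: the exponential pathwise transform $u=Qv$ with $Q=e^{\epsilon W}$ reducing (\ref{Abstract}) to the random PDE (\ref{4.b}), Galerkin existence/uniqueness for $v$ with the cancellation (\ref{Eq B-06}), the energy-equation trick of Lemma \ref{add 0323 lemma.1} to upgrade weak to strong $\mathbb{W}$-convergence and obtain joint continuity of $(t,f)\mapsto v(t,f,\omega)$, and finally the rescaling computation for the cocycle identity (which the paper outsources to Mohammed--Zhang but which you spell out correctly; it is the same calculation the paper makes in Lemma \ref{add 0328 lemma.1}). The only imprecision is cosmetic: the weak $\mathbb{W}$-continuity of the map $(t,f)\mapsto v(t,f,\omega)$ is \emph{not} built into Definition \ref{Def 01} (which only gives weak $\mathbb{W}$-continuity in $t$ for a fixed datum); it has to be derived, as the paper does in Step 2 of Proposition \ref{16.6.I} from uniform $\mathbb{W}$-bounds together with strong $L^2([0,T];\mathbb{V})$ convergence along a subsequence, before the energy identity can close the argument.
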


\begin{thm}\label{32.II}
Assume {\bf(F1)} and {\bf(F2)}. Then for each $(t,\omega)\in\mathbb{R}^+\times\Omega$, the solution  $u(t,\cdot\,,\omega):\mathbb{W}\rightarrow\mathbb{V}$ in Theorem \ref{32.I} is continuously Fr\'{e}chet differentiable on the subset $\mathbb{W}\cap \mathbb{H}^4(\mathcal{O})$, where the `continuously' means that if $f\in\mathbb{W}\cap \mathbb{H}^4(\mathcal{O})$, $\{f_n\}_{n=1}^{\infty}\subset\mathbb{W}\cap \mathbb{H}^4(\mathcal{O})$ and $|f_n -f|_{\mathbb{W}}\rightarrow 0$, then the Fr\'{e}chet derivatives $\|\mathbb{D}u(t,f_n,\omega)-\mathbb{D}u(t,f,\omega)\|_{L(\mathbb{W},\mathbb{V})}\rightarrow 0$ as $n\rightarrow\infty$.
\end{thm}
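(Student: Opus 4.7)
The plan is to reduce the stochastic problem to a pathwise random PDE through the transformation $u(t,f,\omega)=Q(t,\omega)v(t,f,\omega)$ with $Q(t,\omega)=\mathrm{e}^{\epsilon W(t,\omega)}$, which yields
\[
\partial_t v+\nu\widehat{A}v+Q\,\widehat{B}(v,v)=Q^{-1}\widehat{F}(Qv),\qquad v(0)=f.
\]
Since $Q(t,\omega)$ is a scalar, $f$-independent factor, Fréchet differentiability of $u(t,\cdot,\omega):\mathbb{W}\to\mathbb{V}$ is equivalent to that of $v(t,\cdot,\omega):\mathbb{W}\to\mathbb{V}$, so I work entirely with $v$. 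A crucial gain is that, with $\omega$ frozen, Lemma \ref{Regularity} applies pathwise (the randomness having been absorbed into the smooth coefficient $Q$), yielding $v(\cdot,f,\omega)\in L^{\infty}(0,T;\mathbb{H}^4(\mathcal{O}))$ whenever $f\in\mathbb{W}\cap\mathbb{H}^4(\mathcal{O})$; this extra regularity of the base point drives every subsequent estimate.

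I then construct the candidate derivative $\eta=\eta(\cdot,f,\omega;h)$ as the unique $C([0,T];\mathbb{V})$-solution of the formal linearization
\[
\partial_t\eta+\nu\widehat{A}\eta+Q\bigl[\widehat{B}(\eta,v)+\widehat{B}(v,\eta)\bigr]=(I+\alpha A)^{-1}\mathbb{D}F(Qv)\eta,\qquad \eta(0)=h\in\mathbb{W}.
\]
A Galerkin scheme coupled with a $\mathbb{V}$-energy identity, exploiting the cancellation $(\widehat{B}(v,\eta),\eta)_{\mathbb{V}}=0$ and the skew-symmetric bound $|(\widehat{B}(\eta,v),\eta)_{\mathbb{V}}|=|(\widehat{B}(\eta,\eta),v)_{\mathbb{V}}|\leq C|\eta|_{\mathbb{V}}^2|v|_{\mathbb{W}}$ coming from Lemma \ref{Lem-B-01}, together with the controlled $L^{\infty}(0,T;\mathbb{W})$-norm of $v(\cdot,f,\omega)$, produces via Gronwall the well-posedness of $\eta$ and the bound $|\eta(t)|_{\mathbb{V}}\leq C(t,\omega,f)|h|_{\mathbb{V}}$, so $h\mapsto\eta(t,f,\omega;h)$ is a bounded linear operator from $\mathbb{W}$ into $\mathbb{V}$.

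To verify that $\eta$ is indeed the Fréchet derivative, I set $w=v(\cdot,f+h,\omega)-v(\cdot,f,\omega)$ and $\rho=w-\eta$, so $\rho(0)=0$ and
\[
\partial_t\rho+\nu\widehat{A}\rho+Q\bigl[\widehat{B}(\rho,v)+\widehat{B}(v,\rho)\bigr]=-Q\,\widehat{B}(w,w)+R_F(w,\eta),
\]
where $R_F$ collects the second-order Taylor remainder of $\widehat{F}$, which is $o(|w|_{\mathbb{V}})$ by hypothesis \textbf{(F2)}. The core estimate is $|w(t)|_{\mathbb{W}}\leq C|h|_{\mathbb{W}}$, in addition to the easy $|w|_{\mathbb{V}}\leq C|h|_{\mathbb{V}}$: after decomposing $\widehat{B}(v(f+h),w)=\widehat{B}(v(f),w)+\widehat{B}(w,w)$ in the $\mathbb{W}$-energy identity for $w$, the cubic term $(\widehat{B}(w,w),w)_{\mathbb{W}}$ vanishes by a Galerkin-justified version of identity (\ref{Eq B-06}) (the basis $\{e_i\}$ lies in $\mathbb{H}^4(\mathcal{O})$), while the cross terms are dominated by $C|v(f)|_{\mathbb{H}^4}|w|_{\mathbb{W}}^2$ via formula (\ref{Eq B-05}) and the 2D Sobolev embedding $\mathbb{W}\hookrightarrow L^{\infty}$. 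With both bounds on $w$, the quadratic remainder in the $\mathbb{V}$-energy identity for $\rho$ is controlled by $|(\widehat{B}(w,w),\rho)_{\mathbb{V}}|\leq C|w|_{\mathbb{V}}|w|_{\mathbb{W}}|\rho|_{\mathbb{V}}=O(|h|_{\mathbb{W}}^2)|\rho|_{\mathbb{V}}$, and a final Gronwall gives $|\rho(t)|_{\mathbb{V}}=o(|h|_{\mathbb{W}})$, proving Fréchet differentiability with derivative $\eta$.

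Continuity of $f\mapsto\mathbb{D}u(t,f,\omega)$ in $L(\mathbb{W},\mathbb{V})$ on $\mathbb{W}\cap\mathbb{H}^4(\mathcal{O})$ is then obtained by the same $\mathbb{V}$-energy technique applied to $\eta_n-\eta$ (where $\eta_n$ corresponds to $f_n\to f$ in $\mathbb{W}$, with $f,f_n\in\mathbb{W}\cap\mathbb{H}^4$), whose source depends linearly on $v(f_n)-v(f)$ and tends to zero uniformly in $|h|_{\mathbb{W}}\leq1$ thanks to the $\mathbb{H}^4$-continuous dependence of $v$. The principal obstacle throughout is the \emph{weak dissipation} of the cubic curl-term $\widehat{B}$: the quadratic remainder $\widehat{B}(w,w)$ in the $\mathbb{V}$-energy equation for $\rho$ cannot be absorbed without a $\mathbb{W}$-bound on $w$, and that $\mathbb{W}$-bound forces the cancellation identity (\ref{Eq B-06}) and hence the $\mathbb{H}^4$-regularity of the base-point solution $v(f)$. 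This is precisely why differentiability can be established only on $\mathbb{W}\cap\mathbb{H}^4(\mathcal{O})$, and not on the whole phase space $\mathbb{W}$.
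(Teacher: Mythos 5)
Your proposal reproduces the paper's proof faithfully: reduce via the transform $u=Qv$ to a pathwise random PDE, invoke Lemma~\ref{Regularity} pathwise for $\mathbb{H}^4$-regularity of the base trajectory, derive the $\mathbb{W}$-Lipschitz estimate $|v(t,f+h)-v(t,f)|_{\mathbb{W}}\leq C|h|_{\mathbb{W}}$ (this is the paper's Proposition~\ref{15.1.I}) via the cancellation (\ref{Eq B-06}) and the cross-term bound (\ref{Eq B-05}), introduce the linearized equation for the candidate derivative (the paper's (\ref{17.a}) and Proposition~\ref{Lemma 18.I}), and close with a $\mathbb{V}$-energy/Gronwall estimate on the remainder followed by the same technique for continuity of the derivative. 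The only difference is cosmetic: you work with the absolute increment $w=v(f+h)-v(f)$ and remainder $\rho=w-\eta$, whereas the paper uses the scalar-normalized difference quotient $X=\frac{v(f+hg)-v(f)}{h}-z(g)$, but these are trivially equivalent formulations.
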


\begin{thm}\label{32.III}
  Assume {\bf(F3)}. Then the random dynamical system in Theorem \ref{32.I} is asymptotically compact and possesses a unique random attractor $\mathcal{A}^{\epsilon}$. Moreover, the family of random attractors $\{\mathcal{A}^{\epsilon}\}_{|\epsilon|>0}$ is upper semi-continuous as the noise intensity $\epsilon\rightarrow 0$.

\end{thm}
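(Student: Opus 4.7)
The plan is to verify the hypotheses of Lemma \ref{randomattractor} to obtain the random attractor $\mathcal{A}^\epsilon$, and then apply Lemma \ref{add 0409.8} for upper semi-continuity. Continuity of the cocycle is supplied by Theorem \ref{32.I}, so the work splits into (a) producing a closed bounded tempered random absorbing set in $\mathbb{W}$, (b) proving asymptotic compactness of the cocycle in $\mathbb{W}$, and (c) verifying the three conditions of Lemma \ref{add 0409.8}. Throughout I would work pathwise through the auxiliary process $v(t,\omega) = Q(t,\omega)^{-1} u(t,\omega)$ with $Q(t,\omega)=e^{\epsilon W(t,\omega)}$ from (\ref{add 0325.1}); $v$ satisfies a random PDE whose linear part is still $\nu\widehat{A}$ and where the Stratonovich noise term has been absorbed into the drift, so classical energy methods apply.

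For the absorbing set, I would derive an energy inequality for $|v|_\mathbb{V}^2$ using $\langle\widehat{B}(v,v),v\rangle=0$ (Lemma \ref{Lem-B-01}) and $(\widehat{A}v,v)_\mathbb{V}=\|v\|^2$ (from (\ref{A transform 01})), obtaining $\tfrac{d}{dt}|v|_\mathbb{V}^2+2\nu\|v\|^2\le 2Q^{-1}(F(Qv),v)$. By (F3) the Lipschitz bound on $F$ together with (\ref{add 0329.1}) yields $2(F(Qv),v)\le 2C_F\mathcal{P}^2\|v\|\cdot|v|_\mathbb{V}+\cdots$, and the smallness $C_F\mathcal{P}^2<\nu$ is precisely what lets the quadratic term be absorbed into $\nu\|v\|^2$, producing exponential decay of $|v|_\mathbb{V}^2$ plus a tempered remainder driven by $W$ and $F(0)$. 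The supplementary property in Lemma \ref{add 0328 lemma.1} then upgrades this to an absorbing bound in $\mathbb{W}$. Translating back to $u$ supplies a random closed ball $G^\epsilon(\omega)\subset\mathbb{W}$ with tempered radius $R^\epsilon(\omega)$ that absorbs every tempered random set.

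For asymptotic compactness I would adopt the Moise--Rosa--Wang energy-equation method. Given $\omega$, a tempered random set $D$, $t_n\to\infty$ and $x_n\in D(\theta(-t_n,\omega))$, the absorbing bound makes $u_n:=\varphi(t_n,x_n,\theta(-t_n,\omega))$ uniformly bounded in $\mathbb{W}$, so along a subsequence $u_n\rightharpoonup u_\ast$ weakly in $\mathbb{W}$ and strongly in $\mathbb{V}$ by compact embedding. To upgrade to $\mathbb{W}$-strong convergence I would run a diagonal argument: for each $T\in\mathbb{N}$ the backwards translates $\tilde u_n^T(s):=u(s+t_n-T,x_n,\theta(-t_n,\omega))$ on $[0,T]$ are uniformly bounded in $L^\infty([0,T];\mathbb{W})$, a diagonal subsequence converges weakly-$\ast$ to a limit $\bar u$, and invariance/backward-in-time consistency gives $\bar u(T)=u_\ast$. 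The $\mathbb{W}$-level energy equality, whose critical input is $\bigl(\widehat{B}(v,v),v\bigr)_\mathbb{W}=0$ from (\ref{Eq B-06}), combined with weak-lower-semicontinuity of $|\cdot|_\mathbb{W}$, forces $\limsup_n|u_n|_\mathbb{W}\le|u_\ast|_\mathbb{W}$, which in the Hilbert space $\mathbb{W}$ promotes weak to strong convergence.

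For the upper semi-continuity, condition (i) of Lemma \ref{add 0409.8} follows by writing $u^{\epsilon_n}(t)-u^0(t)=(Q^{\epsilon_n}(t)-1)v^{\epsilon_n}(t)+(v^{\epsilon_n}(t)-v^0(t))$: the first term vanishes since $Q^{\epsilon_n}\to 1$ uniformly on compact $t$-intervals, while Grönwall on $v^{\epsilon_n}-v^0$ in $\mathbb{V}$, using (F1), Lemma \ref{Lem-B-01}, and the uniform $\mathbb{W}$-bound, controls the second. Condition (ii) is immediate from the explicit form of $R^\epsilon(\omega)$ in the second paragraph, which is continuous in $\epsilon$ at $\epsilon=0$. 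Condition (iii), precompactness of $\bigcup_{0<|\epsilon|<\epsilon_0}\mathcal{A}^\epsilon(\omega)$ in $\mathbb{W}$, is obtained by applying the energy-equation scheme to a mixed sequence $u_n\in\mathcal{A}^{\epsilon_n}(\omega)$: invariance represents $u_n=\varphi^{\epsilon_n}(t_n,y_n,\theta(-t_n,\omega))$ with $y_n$ in a common absorbing set, and the diagonal extraction together with the energy equality yields a subsequence converging strongly in $\mathbb{W}$ uniformly in $\epsilon$. The main obstacle is precisely step (b): deriving and exploiting the $\mathbb{W}$-energy equality for the cubic-in-curl nonlinearity, where no parabolic smoothing is available; the identity (\ref{Eq B-06}) is what makes this tractable.
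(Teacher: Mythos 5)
Your proposal follows essentially the same route as the paper: a $\mathbb{V}$-level decay estimate under {\bf(F3)}, upgraded to a tempered $\mathbb{W}$-absorbing ball via a $\mathbb{W}$-energy estimate built on the identity (\ref{Eq B-06}), then asymptotic compactness by the Moise--Rosa--Wang energy-equation method with a diagonal extraction, and finally upper semi-continuity by verifying the three conditions of Lemma \ref{add 0409.8}. One small clarification: Lemma \ref{add 0328 lemma.1} is not what lifts $\mathbb{V}$-bounds to $\mathbb{W}$-bounds; its role is to rewrite the pullback $u(t,f,\theta(-t,\omega))$ as $v(0,-t,Q(-t,\omega)^{-1}f,\omega)$, which is needed precisely because the transform $u=Qv$ is non-stationary and $v$ alone is not a cocycle, while the $\mathbb{V}\to\mathbb{W}$ step is a separate Gronwall estimate on $|v|_\mathbb{W}^2$ using (\ref{Eq B-06}).
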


\begin{remark}\label{add1}
Theorem \ref{32.II} reflects the fact that the solution $u(t,f,\omega)$ of (\ref{1.a}) has no smoothing effect in finite time.
Moreover, if we strengthen the regularity of $F$, then the map $u(t,\cdot\,,\omega):\mathbb{W}\rightarrow\mathbb{W}$ will be continuously Fr\'{e}chet differentiable on $\mathbb{W}\cap \mathbb{H}^5(\mathcal{O})$, the proof is similar to that of Theorem \ref{32.II}.

\end{remark}

\section{Proof of Theorem \ref{32.I}}
Our approach is to transform (\ref{Abstract}) into an equation (\ref{4.b})(see below) with random coefficients.
Let
\begin{align}
\label{add 0328.3} Q(t,\omega):&=\mathrm{e}^{\epsilon W(t,\omega)}, \\
\label{add 0406.1}\widehat{F}_Q(u,Q(t,\omega)):&=\widehat{F}\big(Q(t,\omega)u\big)/Q(t,\omega), \\
\label{add 0406.2}\widehat{\mathcal{A}}_Q (u,Q(t,\omega)):&= -\nu\widehat{A}u-Q(t,\omega)\widehat{B}(u,u)+\widehat{F}_Q\big(u,Q(t,\omega)\big).
\end{align}
For simplicity, we sometimes omit the parameter $\omega$ in the following when it is clear from the context.
Consider the following system for each fixed $\omega\in\Omega$,
\begin{align}\label{4.b}
\left\{
\begin{aligned}
& dv(t,f)=\widehat{\mathcal{A}}_Q (v(t,f),Q(t))\,dt,\quad t>0, \\
& v(0,f)=f,\quad \text{in}\ \mathbb{W}.
\end{aligned}
\right.
\end{align}
%
%
%
\noindent Once the existence and uniqueness of solutions to (\ref{4.b}) are established, by It\^{o}'s formula it is easy to see that
%
%
%
$Q(t)v(t,f)$ is a version of the solution to (\ref{Abstract}). Furthermore, we can show that $Q(t)v(t,f)$ generates a random dynamical system.



In the following of this section, we first establish the existence and uniqueness of solutions to (\ref{4.b}) by Galerkin approximations. Then we prove the continuity of the solution $v(t,f)$ with respect to $(t,f)\in \mathbb{R}^+\times\mathbb{W}$. Finally, we give the proof of Theorem \ref{32.I}.

From (\ref{Basis}), we know that $\{\sqrt{\lambda_i}e_i\}_{i=1}^{\infty}$ is an orthonormal basis of $\mathbb{V}$. $\mathbb{V}_n$ denotes the n-dimensional subspace spanned by $\{e_1,e_2,\ldots ...,e_n\}$ in $\mathbb{W}$. Let $\Pi_n:\mathbb{W}^*\rightarrow\mathbb{V}_n$ be defined by
\[ \Pi_ng:=\sum_{i=1}^n\lambda_i\langle g,e_i\rangle e_i,\quad \forall\, g\in\mathbb{W}^*. \]
Now for any integer $n\geq1$, we seek a solution $v_n\in\mathbb{V}_n$ to the equation
\begin{align}\label{5.a}
\left\{
\begin{aligned}
& dv_n(t)= \Pi_n\widehat{\mathcal{A}}_Q (v_n(t),Q(t))\,dt,\quad t>0, \\
& v_n(0)=\Pi_n f .
\end{aligned}
\right.
\end{align}



\begin{lem}\label{Lemma 6.I}
Assume {\bf(F1)} and $f\in\mathbb{W}$. Then there exists a unique global solution to (\ref{5.a}). Moreover, for any $\omega\in\Omega$ and $T\geq 0$, the following estimate holds:
\begin{align}
\label{W norm estimate of v_n} \sup_{n\in\mathbb{N}}\sup_{t\in[0,T]}\big|v_n(t)\big|_{\mathbb{W}}^2\leq C(T)\bigg(|f|_{\mathbb{W}}^2+|F(0)|_{\mathbb{V}}^2\int_0^T \frac{1}{Q(s)^2}\,ds\bigg) .
\end{align}
\end{lem}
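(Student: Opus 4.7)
The plan is to first establish local existence and uniqueness of $v_n$ on the finite-dimensional subspace $\mathbb{V}_n$ via the classical Cauchy--Lipschitz theorem, and then derive the a priori estimate (\ref{W norm estimate of v_n}), which precludes blow-up and therefore yields global existence. Since $\widehat{A}$ maps $\mathbb{W}$ into itself continuously, $\widehat{B}(\cdot,\cdot)$ is bilinear (hence locally Lipschitz on the finite-dimensional $\mathbb{V}_n$), and $\widehat{F}_Q(\cdot,Q(t))$ is globally Lipschitz in its first argument by (F1), the right-hand side of (\ref{5.a}) is locally Lipschitz in $v_n$ and continuous in $t$ through $Q(t)$. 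Hence Cauchy--Lipschitz produces a unique maximal solution valued in $\mathbb{V}_n$.

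The central trick for the a priori estimate is to test (\ref{5.a}) in the $\mathbb{W}$-inner product, relying on two key observations. First, whenever $g\in\mathbb{W}$, the relation $(u,e_i)_{\mathbb{W}}=\lambda_i(u,e_i)_{\mathbb{V}}$ from (\ref{Basis}) together with the Gelfand-triple identification $\langle g,e_i\rangle=(g,e_i)_{\mathbb{V}}$ shows that $\Pi_n g=\sum_{i=1}^n(g,e_i)_{\mathbb{W}}e_i$ coincides with the $\mathbb{W}$-orthogonal projection of $g$ onto $\mathbb{V}_n$. Because $\widehat{A}v_n\in\mathbb{W}$, because $\widehat{B}(v_n,v_n)\in\mathbb{W}$ (using $v_n\in\mathbb{W}\cap\mathbb{H}^4(\mathcal{O})$, the hypothesis preceding (\ref{Eq B-05})), and because $\widehat{F}_Q(v_n,Q)\in\mathbb{W}$ by (\ref{inverse operator transform inequality}), it follows that $(\Pi_n\widehat{\mathcal{A}}_Q(v_n,Q),v_n)_{\mathbb{W}}=(\widehat{\mathcal{A}}_Q(v_n,Q),v_n)_{\mathbb{W}}$. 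Second, since every basis element $e_i\in\mathbb{H}^4(\mathcal{O})$, the approximation $v_n$ lies in $\mathbb{W}\cap\mathbb{H}^4(\mathcal{O})$, so the key identity (\ref{Eq B-06}) applies and the cubic term vanishes: $(\widehat{B}(v_n,v_n),v_n)_{\mathbb{W}}=0$.

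It remains to estimate the two surviving contributions. By the continuity of $\widehat{A}$ on $\mathbb{W}$ and Cauchy--Schwarz, $|(\widehat{A}v_n,v_n)_{\mathbb{W}}|\leq C|v_n|_{\mathbb{W}}^2$. For the forcing term, (\ref{inverse operator transform inequality}) and (F1) yield $|\widehat{F}_Q(v_n,Q)|_{\mathbb{W}}\leq C|F(Qv_n)|_{\mathbb{V}}/Q\leq (C/Q)|F(0)|_{\mathbb{V}}+C|v_n|_{\mathbb{V}}$; using that $\mathbb{W}$ embeds continuously into $\mathbb{V}$, this is bounded by $(C/Q)|F(0)|_{\mathbb{V}}+C|v_n|_{\mathbb{W}}$. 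Combining these bounds via Cauchy--Schwarz and Young's inequality leads to
\[
\frac{d}{dt}|v_n(t)|_{\mathbb{W}}^2 \leq C|v_n(t)|_{\mathbb{W}}^2 + \frac{C}{Q(t)^2}|F(0)|_{\mathbb{V}}^2,
\]
and Gronwall's lemma delivers (\ref{W norm estimate of v_n}) uniformly in $n$, so in particular the local solution extends to all of $[0,T]$.

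The main obstacle is the absence of any coercive lower bound for $(\widehat{A}v_n,v_n)_{\mathbb{W}}$: one gains no dissipation in the $\mathbb{W}$-norm, and consequently only obtains an exponentially growing $T$-dependent bound. This is nevertheless acceptable, but it crucially relies on the exact cancellation provided by (\ref{Eq B-06}); without that identity a term of size $Q(t)|v_n|_{\mathbb{W}}^3$ would survive, and could drive the $\mathbb{W}$-norm to blow up in finite time.
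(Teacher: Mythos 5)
Your proof is correct and follows essentially the same route as the paper: local existence on $\mathbb{V}_n$ by Cauchy--Lipschitz, identification of $\Pi_n$ as the $\mathbb{W}$-orthogonal projection via (\ref{Basis}), testing in the $\mathbb{W}$-inner product, cancellation of the cubic term via (\ref{Eq B-06}), and Gronwall to close. The one place you part ways is the $\widehat{A}$-term, which you dispose of with the crude bound $|(\widehat{A}v_n,v_n)_{\mathbb{W}}|\le C|v_n|_{\mathbb{W}}^2$, and you then assert in your final paragraph that no $\mathbb{W}$-norm dissipation is available. That assertion is not quite right. Using $Au=-\Pi_{\mathbb{H}}\Delta u$ together with (\ref{W equivalence norm}) and (\ref{curlP=curl}), one has $-\nu(\widehat{A}v_n,v_n)_{\mathbb{W}}=\nu\big({\rm curl}\,\Delta v_n,{\rm curl}(v_n-\alpha\Delta v_n)\big)$; substituting $\Delta v_n=\tfrac{1}{\alpha}v_n-\tfrac{1}{\alpha}(v_n-\alpha\Delta v_n)$ turns this into $-\tfrac{\nu}{\alpha}|v_n|_{\mathbb{W}}^2+\tfrac{\nu}{\alpha}\big({\rm curl}\,v_n,{\rm curl}(v_n-\alpha\Delta v_n)\big)$. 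The paper keeps the coercive first piece, controls the lower-order cross term via (\ref{curl and V}) and Young's inequality, and thereby obtains the extra dissipative integral $\frac{\nu}{\alpha}\int_0^T|v_n|_{\mathbb{W}}^2\,ds$ on the left of (\ref{18.1.1}); it also runs a preliminary $\mathbb{V}$-estimate (\ref{7.a}), which simultaneously supplies the global-existence argument. Your shortcut delivers the claimed sup-bound just as well and is perfectly acceptable for this lemma, but the closing remark about the absence of $\mathbb{W}$-level coercivity understates the structure and should be revised.
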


\begin{proof}
Actually, $v_n(t)$ solves (\ref{5.a}) if and only if the Fourier coefficients of $v_n$ solve a system of random ordinary differential equations with locally Lipschitz coefficients.
%
Therefore, (\ref{5.a}) admits a unique local solution defined on the maximal existence time interval $\big[0,T_0(\omega)\big)$.
From (\ref{5.a}) it follows that
\begin{align}\label{6.add}
      d\big|v_n(t)\big|_{\mathbb{V}}^2=2 \big\langle \widehat{\mathcal{A}}_Q (v_n(t),Q(t)), v_n(t)\big\rangle\,dt.
\end{align}

\noindent By (\ref{inverse operator transform}), (\ref{A transform 01}), Lemma \ref{Lem-B-01} and {\bf(F1)}, we get
\begin{align*}
      \big|v_n(t)\big|_{\mathbb{V}}^2+2\nu\int_0^t\big\|v_n(s)\big\|^2\,ds\leq |f|_{\mathbb{V}}^2 + C|F(0)|_{\mathbb{V}}^2\int_0^{T_0}\frac{1}{Q(s)^2}\,ds + C\int_0^t \big|v_n(s)\big|_{\mathbb{V}}^2\,ds .
\end{align*}
Hence Gronwall's inequality implies
\begin{align}\label{7.a}
\sup_{0\leq t <T_0}\big|v_n(t)\big|_{\mathbb{V}}^2+2\nu\int_0^{T_0} \big\|v_n(t)\big\|^2\,dt\leq C(T_0)\bigg(|f|_{\mathbb{V}}^2 + |F(0)|_{\mathbb{V}}^2\int_0^{T_0}\frac{1}{Q(s)^2}\,ds\bigg) ,
\end{align}
which concludes that $v_n$ is global in time, i.e. $T_0=\infty$ for every $\omega\in\Omega$.

Next, we estimate the $\mathbb{W}$-norm of $v_n$.
By (\ref{5.a}), we have
\begin{align}\label{14.1}
\begin{aligned}
& d(v_n(t),e_i)_{\mathbb{V}}=(\widehat{\mathcal{A}}_Q (v_n(t),Q(t)),e_i)_{\mathbb{V}}\,dt,\quad i=1,\ldots,n.
\end{aligned}
\end{align}

\noindent Since $\widehat{\mathcal{A}}_Q (v_n(t),Q(t))\in\mathbb{W}$, multiplying both sides of (\ref{14.1}) by $\lambda_i$, we can use (\ref{Basis}) to obtain
\begin{align}\label{14.1.a}
d(v_n(t),e_i)_{\mathbb{W}}=(\widehat{\mathcal{A}}_Q (v_n(t),Q(t)),e_i)_{\mathbb{W}}\,dt,\quad i=1,\ldots,n.
\end{align}

\noindent Applying the chain rule to $(v_n(t),e_i)_{\mathbb{W}}^2$ and then summing over $i$ from $1$ to $n$ yields
\begin{align}\label{16.1}
\begin{aligned}
d|v_n(t)|_{\mathbb{W}}^2= 2(\widehat{\mathcal{A}}_Q (v_n(t),Q(t)),v_n(t))_{\mathbb{W}}\,dt.
\end{aligned}
\end{align}

\noindent Due to (\ref{Eq B-06}), (\ref{W equivalence norm}) and (\ref{curlP=curl}), we derive
\begin{align}\label{16.2}
\begin{aligned}
 &\big(\widehat{\mathcal{A}}_Q (v_n(t),Q(t)),v_n(t)\big)_{\mathbb{W}}\\
=& \big(\widehat{F}_Q(v_n(t),Q(t)),v_n(t)\big)_{\mathbb{W}}+ \nu\Big({\rm curl}(\Delta v_n(t)),{\rm curl}\big(v_n(t)-\alpha\Delta v_n(t)\big)\Big) \\
=& \big(\widehat{F}_Q(v_n(t),Q(t)),v_n(t)\big)_{\mathbb{W}}-\frac{\nu}{\alpha}\big|v_n(t)\big|_{\mathbb{W}}^2 + \frac{\nu}{\alpha}\Big({\rm curl}\big(v_n(t)\big),{\rm curl}\big(v_n(t)-\alpha\Delta v_n(t)\big)\Big) .
\end{aligned}
\end{align}

\noindent Substituting (\ref{16.2}) into (\ref{16.1}) gives
\begin{align}\label{17.1}
\begin{aligned}
& |v_n(t)|_{\mathbb{W}}^2+ \frac{2\nu}{\alpha}\int_0^t|v_n(s)|_{\mathbb{W}}^2\,ds \\
\leq & |\Pi_n f|_{\mathbb{W}}^2 + \frac{2\nu}{\alpha}\int_0^t \big|{\rm curl}\big(v_n(s)\big)\big|\big|v_n(s)\big|_{\mathbb{W}}\,ds + 2\int_0^t \big|\widehat{F}_Q(v_n(s),Q(s))\big|_{\mathbb{W}}\big|v_n(s)\big|_{\mathbb{W}}\,ds \\
\leq & |\Pi_n f|_{\mathbb{W}}^2 + \frac{2\nu}{\alpha}\int_0^t \big|{\rm curl}\big(v_n(s)\big)\big|^2\,ds + \frac{\nu}{\alpha}\int_0^t \big|v_n(s)\big|_{\mathbb{W}}^2 ds \\ &+ \frac{2\alpha}{\nu}\int_0^t \big|\widehat{F}_Q(v_n(s),Q(s))\big|_{\mathbb{W}}^2\,ds .
\end{aligned}
\end{align}

\noindent Hence
\begin{align}
\begin{aligned}
&\big|v_n(t)\big|_{\mathbb{W}}^2 + \frac{\nu}{\alpha}\int_0^t \big|v_n(s)\big|_{\mathbb{W}}^2\,ds \\ \leq &|\Pi_n f|_{\mathbb{W}}^2 + \frac{2\nu}{\alpha}\int_0^t \big|{\rm curl}\big(v_n(s)\big)\big|^2\,ds + \frac{2\alpha}{\nu}\int_0^t \big|\widehat{F}_Q(v_n(s),Q(s))\big|_{\mathbb{W}}^2\,ds.
\end{aligned}
\end{align}


\noindent By (\ref{curl and V}), (\ref{inverse operator transform inequality}), {\bf (F1)} and Gronwall's inequality, we deduce
%
\begin{align}\label{18.1.1}
\sup_{0\leq t\leq T}\big|v_n(t)\big|_{\mathbb{W}}^2 + \frac{\nu}{\alpha}\int_0^T \big|v_n(s)\big|_{\mathbb{W}}^2\,ds \leq C(T)\bigg(|\Pi_n f|_{\mathbb{W}}^2+|F(0)|_{\mathbb{V}}^2\int_0^T \frac{1}{Q(s)^2}\,ds\bigg) .
\end{align}
Obviously, $|\Pi_n f|_{\mathbb{W}}^2\leq |f|_{\mathbb{W}}^2$ for any $n\in\mathbb{N}$. Hence (\ref{W norm estimate of v_n}) follows from (\ref{18.1.1}).
\end{proof}

\begin{prp}\label{5.I}
Assume {\bf(F1)} and $f\in\mathbb{W}$. Then for any $\omega\in\Omega$ and $T>0$, there exists a unique solution $v(\cdot\,,f,\omega)\in C\big([0,T];\mathbb{V}\big)\cap L^\infty\big([0,T];{\mathbb{W}}\big)$ to (\ref{4.b}).
Moreover, the following estimate holds:
\begin{align}
\label{W norm estimate of v} &\sup_{t\in[0,T]}\big|v(t,f,\omega)\big|_{\mathbb{W}}^2\leq C(T)\bigg(|f|_{\mathbb{W}}^2+|F(0)|_{\mathbb{V}}^2\int_0^T \frac{1}{Q(s)^2}\,ds\bigg) .
\end{align}

\end{prp}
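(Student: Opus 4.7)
I plan to pass to the limit in the Galerkin approximations $v_n$ from Lemma \ref{Lemma 6.I}. For each fixed $\omega$, that lemma already provides a uniform bound of $\{v_n\}$ in $L^\infty([0,T];\mathbb{W})$. To obtain strong compactness, I would additionally bound $\tfrac{dv_n}{dt}$ in $L^\infty([0,T];\mathbb{V})$ using (\ref{A transform 03}) for the Stokes term, (\ref{add 0408.1}) for the bilinear term, \textbf{(F1)} for the forcing, and the uniform $\mathbb{V}$-boundedness of $\Pi_n$ that follows from (\ref{Basis}). Aubin--Lions with the compact embedding $\mathbb{W}\hookrightarrow\hookrightarrow\mathbb{V}$ then yields a subsequence converging strongly in $C([0,T];\mathbb{V})$ and weakly-$*$ in $L^\infty([0,T];\mathbb{W})$ to some limit $v$.

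\textbf{Passing to the limit.} Testing (\ref{5.a}) against a fixed basis element $e_j$, the linear and forcing terms pass to the limit by dominated convergence. For the nonlinearity, combining the alternating identity (fourth line of Lemma \ref{Lem-B-01}) with the first estimate there produces the dual bound $|\widehat{B}(u,w)|_{\mathbb{W}^*}\le C|u|_\mathbb{V}|w|_\mathbb{W}$ valid for $u,w\in\mathbb{W}$. Writing $\widehat{B}(v_n)-\widehat{B}(v)=\widehat{B}(v_n-v,v_n)+\widehat{B}(v,v_n-v)$ and applying this dual bound together with Lemma \ref{Lem-B-01} to each piece, the uniform $\mathbb{W}$-bound and strong $\mathbb{V}$-convergence force $\widehat{B}(v_n)\to\widehat{B}(v)$ in $C([0,T];\mathbb{W}^*)$. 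Hence $v$ satisfies (\ref{4.b}) in $\mathbb{W}^*$; the estimate (\ref{W norm estimate of v}) is then inherited from (\ref{W norm estimate of v_n}) via weak-$*$ lower semi-continuity of $\|\cdot\|_{L^\infty_t\mathbb{W}}$, while $\mathbb{V}$-continuity of $v$ comes from the integrated form of the equation combined with the $\mathbb{V}$-boundedness of every term.

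\textbf{Uniqueness and main obstacle.} For two solutions $v_1,v_2$ and the difference $w=v_1-v_2$, I would derive the $\mathbb{V}$-energy identity using (\ref{A transform 01}), the bilinear splitting $\widehat{B}(v_1)-\widehat{B}(v_2)=\widehat{B}(w,v_1)+\widehat{B}(v_2,w)$, the vanishing $\langle\widehat{B}(v_2,w),w\rangle=0$ from Lemma \ref{Lem-B-01}, and \textbf{(F1)}. Controlling the remaining trilinear term by $|\langle\widehat{B}(w,v_1),w\rangle|\le C|w|_\mathbb{V}^{2}|v_1|_\mathbb{W}$ (via the alternating identity together with the second line of Lemma \ref{Lem-B-01}) and then applying Gronwall with the integrable coefficient $|v_1|_\mathbb{W}\in L^\infty([0,T])$ yields uniqueness. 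The main obstacle is the weak dissipation of the system: the Stokes term is only $\|\cdot\|$-coercive, while the nonlinearity naturally lives in $\mathbb{W}$, so a direct energy argument in $\mathbb{W}$ is not available. This is precisely why the symmetric $\mathbb{W}^*$-bound on $\widehat{B}$ obtained from the alternating property -- essentially the only extra ingredient beyond Lemma \ref{Lem-B-01} -- is indispensable both for the limit passage and for the uniqueness estimate.
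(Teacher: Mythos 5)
Your overall architecture -- Galerkin approximation, uniform $\mathbb{W}$ and $\mathbb{V}$-derivative bounds, Aubin--Lions compactness, pass to the limit, uniqueness by $\mathbb{V}$-energy estimate plus Gronwall -- matches the paper's route, and the uniqueness argument, including the bound $|\langle\widehat{B}(w,v_1),w\rangle|\le C|w|_\mathbb{V}^2|v_1|_\mathbb{W}$ obtained from the alternating identity $\langle\widehat{B}(w,v_1),w\rangle=-\langle\widehat{B}(w,w),v_1\rangle$ and the second estimate of Lemma \ref{Lem-B-01}, is correct and is exactly what the paper does.

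The gap is the "dual bound'' you claim, $|\widehat{B}(u,w)|_{\mathbb{W}^*}\le C|u|_\mathbb{V}|w|_\mathbb{W}$. Combining the alternating identity (which swaps only the \emph{last two} arguments of the trilinear form) with the first estimate of Lemma \ref{Lem-B-01} gives
\begin{equation*}
|\langle\widehat{B}(u,w),\phi\rangle|=|\langle\widehat{B}(u,\phi),w\rangle|\le C|u|_\mathbb{W}\,|\phi|_\mathbb{V}\,|w|_\mathbb{W},
\end{equation*}
hence at best $|\widehat{B}(u,w)|_{\mathbb{W}^*}\le C|u|_\mathbb{W}|w|_\mathbb{W}$ -- you can trade the $\mathbb{V}$-norm on the second slot for a $\mathbb{W}$-norm on the third, but you cannot lower $u$ from $\mathbb{W}$ to $\mathbb{V}$, because ${\rm curl}(u-\alpha\Delta u)$ sits irreducibly in the first slot. (The special-case estimate $|\widehat{B}(u,u)|_{\mathbb{W}^*}\le C|u|_\mathbb{V}^2$ rests on a cancellation available only when both slots carry the same $u$, and it does not polarize to the asymmetric form; moreover an attempt to move the two missing derivatives from $u$ onto $w$ and $\phi$ by integrating the $\alpha\Delta{\rm curl}$ piece by parts twice leaves a boundary term $\int_{\partial\mathcal{O}}{\rm curl}(u)\,\partial_n(w\wedge\phi)\,d\sigma$ that is not controllable for $u\in\mathbb{V}$.) Consequently, in your splitting $\widehat{B}(v_n)-\widehat{B}(v)=\widehat{B}(v_n-v,v_n)+\widehat{B}(v,v_n-v)$ the second piece converges via $|\widehat{B}(v,v_n-v)|_{\mathbb{W}^*}\le C|v|_\mathbb{W}|v_n-v|_\mathbb{V}$, but the first piece does not: the available bound is $C|v_n-v|_\mathbb{W}|v_n|_\mathbb{V}$, and $|v_n-v|_\mathbb{W}$ only converges weakly-$*$, so the claimed strong convergence of $\widehat{B}(v_n)$ in $C([0,T];\mathbb{W}^*)$ is unsupported.

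The paper sidesteps this by aiming lower: it only establishes $\widehat{B}(v_{n_k})\rightharpoonup\widehat{B}(v)$ \emph{weakly} in $L^1([0,T];\mathbb{W}^*)$, which is all that is needed to pass to the limit in the integrated Galerkin equation. For that, one pairs with a test $\phi\in L^\infty([0,T];\mathbb{W})$, handles the piece $\widehat{B}(v_{n_k},v_{n_k}-v)$ with the first estimate of Lemma \ref{Lem-B-01} together with the strong $L^2([0,T];\mathbb{V})$ convergence, and handles the piece $\widehat{B}(v_{n_k}-v,v)$ by rewriting the duality as an $L^2$-pairing of ${\rm curl}[(v_{n_k}-v)-\alpha\Delta(v_{n_k}-v)]$ (which converges to zero weakly-$*$ in $L^\infty([0,T];L^2)$) against the fixed $L^1([0,T];L^2)$ function $v\wedge\phi$. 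You should replace your strong-convergence claim by this weak $L^1$ convergence and drop the unjustified dual bound; the rest of your proof then survives unchanged.
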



\begin{proof}
Fix $\omega\in\Omega$ and $T>0$.
Obviously, $\|Q\|_{\infty,T}:=\sup_{0\leq t\leq T}Q(t)<\infty$.
By (\ref{A transform 03}), (\ref{add 0408.1}), (\ref{inverse operator transform inequality}), {\bf{(F1)}} and (\ref{W norm estimate of v_n}), we have
\begin{align}\label{11.c}
\begin{aligned}
&\sup_{0\leq t\leq T} \left|v_n'(t)\right|_{\mathbb{V}}  \\
=&\sup_{0\leq t\leq T} \big|\Pi_n\widehat{\mathcal{A}}_Q (v_n(t),Q(t))\big|_{\mathbb{V}} \\
\leq & \sup_{0\leq t\leq T}\big|\widehat{A}v_n(t)\big|_{\mathbb{V}}  +
  \sup_{0\leq t\leq T}\big|Q(t)\widehat{B}\big(v_n(t)\big)\big|_{\mathbb{V}}   +
  \sup_{0\leq t\leq T}\big|\widehat{F}_Q\big(v_n(t),Q(t)\big)\big|_{\mathbb{V}} \\
\leq& C\sup_{0\leq t\leq T}\big|v_n(t)\big|_{\mathbb{V}}  +
  C\sup_{0\leq t\leq T}|Q(t)|\sup_{0\leq t\leq T}\big|v_n(t)\big|_{\mathbb{W}}^2
  +C\sup_{0\leq t\leq T} \frac{|F(0)|_{\mathbb{V}}}{Q(s)}  \\
  \leq & C(T,|f|_{\mathbb{W}},|F(0)|_{\mathbb{V}},\omega), \quad \forall\, n\in\mathbb{N}.
\end{aligned}
\end{align}


\noindent (\ref{11.c}) and (\ref{W norm estimate of v_n}) imply that there exist a subsequence $\{v_{n_k}\}_{k=1}^{\infty}$ and $v$ such that:
\begin{align}
\label{12.a.b}& v_{n_k}\rightharpoonup v \,\quad\text{weakly star in}\  L^{\infty}\big([0,T];\mathbb{W}\big) , \\
\label{12.a.d}& v_{n_k}'\rightharpoonup v' \quad\text{weakly star in}\  L^{\infty}\big([0,T];\mathbb{V}\big) .
\end{align}

\noindent Moreover, $v$ satisfies the estimate (\ref{W norm estimate of v}), and $v'(\cdot\, ,f,\omega)\in L^{\infty}\big([0,T];\mathbb{V}\big)$.
Using Lemma
\uppercase\expandafter{\romannumeral3}.1.2 in \cite{1977-Temam-p500-500}, we deduce that $v(\cdot\, ,f,\omega)\in C\big([0,T];\mathbb{V}\big)$ for any $f\in\mathbb{W}$.
Lemma
\uppercase\expandafter{\romannumeral3}.1.4 in \cite{1977-Temam-p500-500} implies that $v(t,f,\omega)\in\mathbb{W}$ for any $t\geq 0$, and $v(\cdot\, ,f,\omega)$ is $\mathbb{W}$-valued weakly continuous.
Since the embedding $\mathbb{W}\hookrightarrow\mathbb{V}$ is compact, by Theorem \uppercase\expandafter{\romannumeral3}.2.1 in \cite{1977-Temam-p500-500}, it follows that
\begin{align}\label{vn strongly convergent in V}
v_{n_k}\rightarrow v \quad \text{strongly in}\ L^2\big([0,T];\mathbb{V}\big).
\end{align}

\noindent By (\ref{12.a.b}), (\ref{vn strongly convergent in V}), the bilinear property of $\widehat{B}$ and the following estimate
\begin{align}\label{B(u,v) estimate}
  \begin{aligned}
    \|\widehat{B}(v(\cdot),v(\cdot))\|_{L^1([0,T];\mathbb{W}^*)}\leq C\|v(\cdot)\|_{L^2([0,T];\mathbb{W})}\|v(\cdot)\|_{L^2([0,T];\mathbb{V})},
  \end{aligned}
\end{align}

\noindent we obtain that
\begin{align}\label{B(vn,vn) limit }
\widehat{B}\big(v_{n_k}(s)\big)\rightharpoonup \widehat{B}\big(v(s)\big)\,\quad\text{weakly in}\   L^1([0,T];\mathbb{W}^*).
\end{align}

\noindent Therefore, we can pass to the limit in (\ref{5.a}) to conclude that $v$ is a solution of (\ref{4.b}).

%
%
%
%

To prove the uniqueness, we consider the following equation:
\begin{align}\label{13.a}
\left\{
\begin{aligned}
& d\big(v(t,f)-v(t,g)\big)=\big(\widehat{\mathcal{A}}_Q (v(t,f),Q(t))-\widehat{\mathcal{A}}_Q (v(t,g),Q(t))\big)\,dt,\quad t>0,\\
& v(0,f)-v(0,g)=f-g .
\end{aligned}
\right.
\end{align}

\noindent where $f,g\in\mathbb{W}$. By the chain rule and Lemma \ref{Lem-B-01}, we have
\begin{align*}
& \big|v(t,f)-v(t,g)\big|_{\mathbb{V}}^2 \\
=&\,|f-g|_{\mathbb{V}}^2-2\nu\int_0^t\big\langle\widehat{\mathcal{A}}_Q (v(s,f),Q(s))-\widehat{\mathcal{A}}_Q (v(s,g),Q(s)),v(s,f)-v(s,g)\big\rangle\,ds \\
=&\,|f-g|_{\mathbb{V}}^2- 2\nu\int_0^t\big\|v(s,f)-v(s,g)\big\|^2\,ds
+ 2\int_0^t Q(s)\big\langle\widehat{B}\big(v(s,f)-v(s,g)\big),v(s,g)\big\rangle\,ds \\ &+2\int_0^t\big\langle\widehat{F}_Q\big(v(s,f),Q(s)\big)-\widehat{F}_Q\big(v(s,g),Q(s)\big),v(s,f)-v(s,g)\big\rangle\,ds\\
\leq & |f-g|_{\mathbb{V}}^2- 2\nu\int_0^t\big\|v(s,f)-v(s,g)\big\|^2\,ds  +C\int_0^t Q(s)\big|v(s,f)-v(s,g)\big|_{\mathbb{V}}^2\big|v(s,g)\big|_{\mathbb{W}}\,ds\\
& +C\int_0^t\big|v(s,f)-v(s,g)\big|_{\mathbb{V}}^2\,ds.
\end{align*}
Applying Gronwall's inequality and using (\ref{W norm estimate of v}), we obtain
\begin{align}\label{14.a}
\begin{aligned}
& \sup_{0\leq t\leq T}\big|v(s,f)-v(s,g)\big|_{\mathbb{V}}^2 + 2\nu\int_0^T\big\|v(s,f)-v(s,g)\big\|^2\,ds \\
\leq &\,|f-g|_{\mathbb{V}}^2\exp\Big(CT+C\int_0^T Q(s)\big|v(s,g)\big|_{\mathbb{W}}\,ds\Big)\\
\leq &\,|f-g|_{\mathbb{V}}^2\exp\Big(CT+\|Q\|_{\infty,T}C(|g|_{\mathbb{W}},T,\omega)\Big) ,
\end{aligned}
\end{align}

\noindent which implies the uniqueness.
\end{proof}

The following energy equation will be used several times in this paper, so we give it here.
\begin{lem}\label{add 0323 lemma.1}
Assume {\bf(F1)} and $f\in\mathbb{W}$. Then the solution $v(t,f)$ of (\ref{4.b}) satisfies the following energy equation:
\begin{align}\label{energy equation}
   |v(t,f)|_{\mathbb{W}}^2=|f|_{\mathbb{W}}^2\mathrm{e}^{-\frac{2\nu}{\alpha}t}+2\int_0^{t}K(v(s,f),Q(s))\mathrm{e}^{-\frac{2\nu}{\alpha}(t-s)}\,ds, \quad \forall\, t\geq 0 ,
\end{align}
where
\begin{align}\label{add 0323.2}
\begin{aligned}
  &K(v(s,f),Q(s))\\ :=&\bigg(\frac{\nu}{\alpha}{\rm curl}\Big(v(s,f)\Big)+{\rm curl}\Big(Q(s)^{-1}F\big(Q(s)v(s,f)\big)\Big),{\rm curl}\Big(v(s,f)-\alpha\Delta v(s,f)\Big)\bigg).
\end{aligned}
\end{align}

\end{lem}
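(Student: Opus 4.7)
The plan is to derive the energy identity first at the Galerkin level---where the finite-dimensional approximation enjoys the extra regularity needed to invoke the vanishing identity (\ref{Eq B-06})---and then to pass to the limit. Each approximation $v_n$ from Lemma~\ref{Lemma 6.I} lies in $\mathbb{V}_n \subset \mathbb{W}\cap\mathbb{H}^4(\mathcal{O})$, because every basis element $e_i \in \mathbb{H}^4(\mathcal{O})$. Consequently $(\widehat{B}(v_n,v_n),v_n)_{\mathbb{W}}=0$ by (\ref{Eq B-06}). Starting from (\ref{16.1})--(\ref{16.2}) and rewriting the $F$-contribution via (\ref{W equivalence norm}) and (\ref{curlP=curl}) as $(\mathrm{curl}(Q^{-1}F(Qv_n)),\mathrm{curl}(v_n-\alpha\Delta v_n))$, one obtains the scalar linear ODE
\begin{align*}
\tfrac{d}{dt}|v_n(t)|_{\mathbb{W}}^2+\tfrac{2\nu}{\alpha}|v_n(t)|_{\mathbb{W}}^2=2K(v_n(t),Q(t)),\qquad |v_n(0)|_{\mathbb{W}}^2=|\Pi_n f|_{\mathbb{W}}^2,
\end{align*}
whose Duhamel solution is precisely the Galerkin analogue of (\ref{energy equation}).

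For the passage to the limit, $|\Pi_n f|_{\mathbb{W}}^2\to|f|_{\mathbb{W}}^2$ since $\{e_i\}$ is an orthonormal basis of $\mathbb{W}$. For the $K$-integral, I would use the convergences established in the proof of Proposition~\ref{5.I}: along a subsequence $v_{n_k}\to v$ strongly in $L^2([0,T];\mathbb{V})$ and weakly-$\ast$ in $L^\infty([0,T];\mathbb{W})$. Since $\mathrm{curl}$ is bounded from $\mathbb{V}$ to $L^2$, $\mathrm{curl}(v_{n_k})\to\mathrm{curl}(v)$ strongly in $L^2_tL^2_x$, and the Lipschitz assumption {\bf(F1)} propagates this to strong $L^2_tL^2_x$ convergence of $\mathrm{curl}(Q^{-1}F(Qv_{n_k}))$. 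Meanwhile $\mathrm{curl}(v_{n_k}-\alpha\Delta v_{n_k})$ stays bounded in $L^\infty_tL^2_x$ and converges weakly to $\mathrm{curl}(v-\alpha\Delta v)$. The weak--strong pairing, together with the bounded smooth weight $e^{-\frac{2\nu}{\alpha}(t-s)}$, gives pointwise convergence of the $K$-integral, so that $|v_{n_k}(t)|_{\mathbb{W}}^2$ converges pointwise to $L(t):=|f|_{\mathbb{W}}^2e^{-\frac{2\nu}{\alpha}t}+2\int_0^tK(v(s),Q(s))e^{-\frac{2\nu}{\alpha}(t-s)}ds$ for every $t\geq 0$.

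It remains to identify $L(t)$ with $|v(t)|_{\mathbb{W}}^2$. The bound (\ref{11.c}) on $v_n'$ in $L^\infty_t\mathbb{V}$ combined with the compact embedding $\mathbb{W}\hookrightarrow\mathbb{V}$ gives, via Arzel\`a--Ascoli, $v_{n_k}\to v$ in $C([0,T];\mathbb{V})$ along a further subsequence; together with the $L^\infty_t\mathbb{W}$ bound, this yields pointwise weak convergence $v_{n_k}(t)\rightharpoonup v(t)$ in $\mathbb{W}$, and weak lower semi-continuity then supplies one inequality $|v(t)|_{\mathbb{W}}^2\leq L(t)$. The main obstacle is the reverse inequality, since weak $\mathbb{W}$-convergence alone does not preserve the norm. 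I would address this through the Hilbert identity $|v_{n_k}(t)-v(t)|_{\mathbb{W}}^2=|v_{n_k}(t)|_{\mathbb{W}}^2-2(v_{n_k}(t),v(t))_{\mathbb{W}}+|v(t)|_{\mathbb{W}}^2$, upgrading to strong $\mathbb{W}$-convergence of the Galerkin sequence via a Cauchy estimate on $v_n-v_m$ in which (\ref{Eq B-06}) is again invoked to annihilate the dangerous cubic curl-term, leaving only the linear dissipation and a vanishing projection error. This forces equality in the energy identity for every $t\geq 0$, completing the proof.
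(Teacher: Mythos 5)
The Galerkin energy identity, the limit of the $K$-integral via weak--strong pairing, the Arzel\`a--Ascoli upgrade to $C([0,T];\mathbb{V})$, and the one-sided inequality $|v(t,f)|_{\mathbb{W}}^2 \le L(t)$ from weak lower semi-continuity all match the paper's strategy and look sound. The gap is in your treatment of the reverse inequality.

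You propose to obtain strong $\mathbb{W}$-convergence of the Galerkin sequence through a Cauchy estimate on $v_n - v_m$, asserting that (\ref{Eq B-06}) annihilates ``the dangerous cubic curl-term.'' It does not. Writing $w_{nm}:=v_n-v_m$ and
\begin{align*}
\widehat{B}(v_n)-\widehat{B}(v_m)=\widehat{B}(w_{nm},v_n)+\widehat{B}(v_m,w_{nm}),
\end{align*}
only $(\widehat{B}(w_{nm},v_n),w_{nm})_{\mathbb{W}}$ vanishes by (\ref{Eq B-06}); the cross-term survives and, by (\ref{Eq B-05}),
\begin{align*}
\big(\widehat{B}(v_m,w_{nm}),w_{nm}\big)_{\mathbb{W}}
=\Big(\big(w_{nm}\cdot\nabla\big)\,\mathrm{curl}\big(v_m-\alpha\Delta v_m\big),\,\mathrm{curl}\big(w_{nm}-\alpha\Delta w_{nm}\big)\Big).
\end{align*}
To close a Gronwall argument from here you must control $\nabla\,\mathrm{curl}(v_m-\alpha\Delta v_m)$, i.e.\ an $\mathbb{H}^4(\mathcal{O})$-norm of $v_m$, \emph{uniformly in $m$}. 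That bound is unavailable under the hypotheses: $f$ is only in $\mathbb{W}$, so $|\Pi_m f|_{\mathbb{H}^4(\mathcal{O})}\to\infty$ in general, and the equation has no smoothing effect in finite time (cf.\ Remark~\ref{add1}); the paper itself only proves $\mathbb{W}$-Lipschitz dependence on the data (Proposition~\ref{15.1.I}) at points with $\mathbb{H}^4(\mathcal{O})$ regularity, and its proof relies on Lemma~\ref{Regularity} precisely to supply that extra regularity. There are also non-vanishing ``projection error'' terms from $\Pi_n\neq\Pi_m$ that you dismiss without estimate, but the cross-term above is the essential obstruction. The paper avoids all of this: it does not attempt strong convergence of the Galerkin sequence, but instead obtains the reverse inequality from the time-reversing technique of Moise--Rosa--Wang~\cite{1998-Moise-p1369-1393}, which exploits lower semi-continuity along the time-reversed flow and never requires $\mathbb{H}^4$ control of $v_n$.
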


\begin{proof}
We only give the idea of the proof, since the details are similar to the proof of Theorem 4.1.2 in \cite{1998-Moise-p1369-1393}.
From (\ref{16.1}) and (\ref{16.2}), it follows that $v_n(t,f_n)$ satisfies this energy equation. Then let $n\rightarrow\infty$. Due to the weak convergence of $v_n(t,f_n)$, we can obtain that the left hand side of (\ref{energy equation}) is less than or equal to the right hand side of (\ref{energy equation}). The opposite inequality can be proved using the time reversing technique.

\end{proof}


\begin{prp}\label{16.6.I}
Assume {\bf(F1)}. Then for any $\omega\in\Omega$, the map $\mathbb{R}^{+}\times\mathbb{W}\ni (t,f)\mapsto v(t,f,\omega)\in\mathbb{W}$ is continuous.
\end{prp}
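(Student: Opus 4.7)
The plan is to combine strong continuous dependence in the weaker space $\mathbb{V}$ with the energy equation (\ref{energy equation}) of Lemma \ref{add 0323 lemma.1} so as to upgrade weak $\mathbb{W}$-convergence to strong $\mathbb{W}$-convergence. Fix $\omega \in \Omega$ and let $(t_n, f_n) \to (t, f)$ in $\mathbb{R}^{+} \times \mathbb{W}$; choose $T > 0$ with $t, t_n \in [0, T]$ for all $n$. Since $\sup_n |f_n|_\mathbb{W} < \infty$, the estimate (\ref{W norm estimate of v}) gives the uniform bound $M := \sup_n \sup_{s \in [0, T]} |v(s, f_n, \omega)|_\mathbb{W} < \infty$.

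I would first establish strong $\mathbb{V}$-convergence. The splitting
\[
|v(t_n, f_n) - v(t, f)|_\mathbb{V} \leq |v(t_n, f_n) - v(t_n, f)|_\mathbb{V} + |v(t_n, f) - v(t, f)|_\mathbb{V}
\]
reduces matters to the uniqueness estimate (\ref{14.a}) (applied uniformly in the time variable) combined with the continuous embedding $\mathbb{W} \hookrightarrow \mathbb{V}$ for the first term, and the $\mathbb{V}$-path continuity from Proposition \ref{5.I} for the second term. The same argument at each fixed $s \in [0, T]$ yields $v(s, f_n, \omega) \to v(s, f, \omega)$ in $\mathbb{V}$. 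Coupled with the $\mathbb{W}$-bound $M$ and uniqueness of subsequential weak limits, this gives weak $\mathbb{W}$-convergence both at the moving argument $v(t_n, f_n) \rightharpoonup v(t, f)$ and pointwise $v(s, f_n) \rightharpoonup v(s, f)$ in $\mathbb{W}$ for each $s \in [0, T]$.

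The decisive step writes the energy equation (\ref{energy equation}) for $v(t_n, f_n)$. The boundary term $|f_n|_\mathbb{W}^2 \mathrm{e}^{-\frac{2\nu}{\alpha} t_n}$ converges to $|f|_\mathbb{W}^2 \mathrm{e}^{-\frac{2\nu}{\alpha} t}$. For the integral, the integrand $K(v(s, f_n), Q(s))$ from (\ref{add 0323.2}) is an $L^2(\mathcal{O})$-inner product whose first factor $\tfrac{\nu}{\alpha} {\rm curl}(v(s, f_n)) + {\rm curl}(Q(s)^{-1} F(Q(s) v(s, f_n)))$ converges strongly in $L^2(\mathcal{O})$ by (\ref{curl and V}), the $\mathbb{V}$-Lipschitz continuity of $F$ from (F1), and the strong $\mathbb{V}$-convergence just established; its second factor ${\rm curl}(v(s, f_n) - \alpha \Delta v(s, f_n))$ converges only weakly in $L^2(\mathcal{O})$ (transport of the weak $\mathbb{W}$-limit through the linear isometry (\ref{W equivalence norm})), but weak-times-strong is enough for pointwise convergence $K(v(s, f_n), Q(s)) \to K(v(s, f), Q(s))$. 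A uniform bound $|K(v(s, f_n), Q(s))| \leq C(M, |F(0)|_\mathbb{V}) (1 + Q(s)^{-1})$, integrable on $[0, T]$ because $Q^{-1}$ is continuous, lets dominated convergence pass to the limit in the Duhamel integral (absorbing the indicator $\mathbf{1}_{[0, t_n]}(s)$ induced by $t_n \to t$). Therefore $|v(t_n, f_n)|_\mathbb{W} \to |v(t, f)|_\mathbb{W}$, and combining with the weak $\mathbb{W}$-convergence gives strong convergence in the Hilbert space $\mathbb{W}$, as required.

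The main obstacle is the absence of smoothing: because $\widehat{B}$ is of third order, no bounded solution operator from $\mathbb{V}$ to $\mathbb{W}$ exists, so a direct Gronwall estimate on $|v(t_n, f_n) - v(t, f)|_\mathbb{W}^2$ is out of reach. The whole plan works only thanks to the identity (\ref{Eq B-06}), which eliminates the dangerous trilinear term from the $\mathbb{W}$-energy balance; this leaves the lower-order expression $K$ whose two-factor structure matches a strongly convergent factor (requiring only $\mathbb{V}$-regularity) with a weakly convergent factor (requiring only the $\mathbb{W}$-bound), and it is this compensation that finally closes the argument.
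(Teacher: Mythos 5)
Your proof is correct and shares the essential architecture with the paper's: establish strong $\mathbb{V}$-convergence of $v(t_n,f_n)$ to $v(t,f)$, couple it with the uniform $\mathbb{W}$-bound from (\ref{W norm estimate of v}) to obtain weak $\mathbb{W}$-convergence, and then upgrade to strong $\mathbb{W}$-convergence via the energy equation (\ref{energy equation}). The routes to the $\mathbb{V}$-convergence, however, are genuinely different. The paper reruns the Galerkin compactness extraction of Proposition \ref{5.I} to get $L^2([0,T];\mathbb{V})$-strong and $L^\infty([0,T];\mathbb{W})$-weak-star convergence of the trajectories $v(\cdot,f_n)$, identifies the limit by uniqueness plus a contradiction argument, and then promotes trajectory convergence to convergence at the moving time $t_n$ via almost-everywhere convergence along a further subsequence, an equi-Lipschitz-in-time bound descending from (\ref{11.c}), and a four-term decomposition with a double limit. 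You short-circuit all of this by invoking the Lipschitz estimate (\ref{14.a}) directly, which already gives the stronger statement $\sup_{s\in[0,T]}|v(s,f_n)-v(s,f)|_\mathbb{V}\to 0$, and by pairing that with the $C([0,T];\mathbb{V})$-path regularity from Proposition \ref{5.I} to absorb the moving time; this is markedly more direct and avoids the subsequence gymnastics. In the final step you pass to the limit in the energy integral by pointwise convergence of the integrand plus dominated convergence with the integrable envelope $C(M,|F(0)|_\mathbb{V})(1+Q(s)^{-1})$, whereas the paper pairs trajectory-level strong $L^2([0,T];L^2(\mathcal{O}))$-convergence of the first factor of $K$ against weak-star $L^\infty([0,T];L^2(\mathcal{O}))$-convergence of the second; both are valid instances of weak-times-strong, and both rely on the two-factor structure of $K$ in (\ref{add 0323.2}), which in turn exists because the cancellation (\ref{Eq B-06}) removed the dangerous trilinear term from the $\mathbb{W}$-energy balance, exactly as you point out.
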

\begin{proof}
Fix $t\geq 0$ and $f\in\mathbb{W}$. Let $\{t_n\}_{n=1}^{\infty}$ be any sequence in $\mathbb{R}^{+}$ such that $t_n\rightarrow t$. So there exists a constant $T> 0$ such that $\sup_n t_n< T$.
Let $\{f_n\}_{n=1}^{\infty}$ be any sequence in $\mathbb{W}$ such that $f_n\rightarrow f$ strongly in $\mathbb{W}$. So there exists a constant $c> 0$ such that $\sup_n |f_n|_{\mathbb{W}}^2< c$.
Our aim is to show that $v(t_n,f_n)\rightarrow v(t,f)$ strongly in $\mathbb{W}$. We prove this with the following three steps.

Step 1.
By the same method as the proof of Proposition \ref{5.I}, we can extract a subsequence $\{v(\cdot,f_{n_j})\}_{j=1}^{\infty}$ such that
\begin{align}
\label{continuous convergence1}& v(\cdot,f_{n_j})\rightharpoonup \widetilde{v}(\cdot) \,~\quad\text{weakly star in}\  L^{\infty}\big([0,T];\mathbb{W}\big) , \\
\label{continuous convergence3}& v'(\cdot,f_{n_j})\rightharpoonup \widetilde{v}'(\cdot) \quad\text{weakly star in}\  L^{\infty}\big([0,T];\mathbb{V}\big),\\
\label{continuous convergence4}& v(\cdot,f_{n_j})\rightarrow \widetilde{v}(\cdot) \,~\quad\text{strongly in}\  L^2\big([0,T];\mathbb{V}\big) ,
\end{align}
for some $\widetilde{v}\in L^{\infty}\big([0,T];\mathbb{W}\big)$ with $\widetilde{v}'\in L^{\infty}\big([0,T];\mathbb{V}\big)$. (\ref{continuous convergence1})-(\ref{continuous convergence4}) allow us to pass to the limit in the equation for $v(\cdot,f_{n_j},\omega)$ to find that $\widetilde{v}(\cdot)$ is also a solution of (\ref{4.b}) with $\widetilde{v}(0)=f$.
Then by the uniqueness of solutions to (\ref{4.b}), we obtain $\widetilde{v}(\cdot)=v(\cdot,f)$.
Moreover, by a contradiction argument, the uniqueness of solutions to (\ref{4.b}) implies that the whole sequence $\{v(\cdot,f_n)\}_{n=1}^{\infty}$ converges to $v(\cdot,f)$ in the sense of (\ref{continuous convergence1})-(\ref{continuous convergence4}).

Step 2.
Since
\begin{align}
 v(\cdot,f_{n})\rightarrow v(\cdot,f) \,\quad\text{strongly in}\  L^2\big([0,T];\mathbb{V}\big),
\end{align}
there exists a subsequence $\{n_k\}_{k=1}^{\infty}$ and a subset $\Lambda\subset [0,T]$ of Lebesgue measure zero such that
\begin{align}\label{add 0409.1}
  v(s,f_{n_k})\rightarrow v(s,f),\quad \text{ for every } s\in [0,T]\backslash\Lambda.
\end{align}
Take any sequence $\{s_m\}_{m=1}^{\infty}\subset [0,T]\backslash\Lambda$ such that $s_m\rightarrow t$.
Similar to (\ref{11.c}), we have
\begin{align}
  \sup_{n\in\mathbb{N}}\sup_{0\leq r\leq T} |v'(r,f_n)|_{\mathbb{V}}\leq C(T,c,|F(0)|_{\mathbb{V}},\omega).
\end{align}
Hence for any $a,b\in [0,T]$ and $a<b$,
\begin{align}\label{add 0409.2}
  \sup_{n\in\mathbb{N}}|v(a,f_n)-v(b,f_n)|_{\mathbb{V}}\leq \sup_{n\in\mathbb{N}}\int_a^b |v'(r,f_n)|_{\mathbb{V}}dr \leq C(T,c,|F(0)|_{\mathbb{V}},\omega)(b-a).
\end{align}
Note
\begin{align}\label{add 0416.1}
\begin{aligned}
  v(t_{n_k},f_{n_k})-v(t,f)=&v(t_{n_k},f_{n_k})-v(t,f_{n_k})+v(t,f_{n_k})-v(s_m,f_{n_k}) \\
  &+v(s_m,f_{n_k})-v(s_m,f)+v(s_m,f)-v(t,f) .
\end{aligned}
\end{align}
First let $k\rightarrow\infty$, and then let $m\rightarrow\infty$ in (\ref{add 0416.1}), together with (\ref{add 0409.1}) and (\ref{add 0409.2}), we obtain
\begin{align}
  |v(t_{n_k},f_{n_k})-v(t,f)|_{\mathbb{V}}\rightarrow 0 ,\quad \text{ as } k\rightarrow\infty.
\end{align}
Again, by a contradiction argument, the whole sequence $\{v(t_n,f_n)\}_{n=1}^{\infty}$ converges to $v(t,f)$ strongly in $\mathbb{V}$.
Since $\sup_{n}|v(t_n,f_n)|_{\mathbb{W}}<\infty$ by (\ref{W norm estimate of v}), and $\mathbb{V}$ is dense in $\mathbb{W}^*$, we get
%
\begin{align}\label{continuous weak W}
  v(t_n,f_n)\rightharpoonup v(t,f) \quad\text{weakly in }\mathbb{W} \text{ as } n\rightarrow\infty.
\end{align}

Step 3.
By Lemma \ref{add 0323 lemma.1}, we have the following energy equation for $v(t_n,f_n)$:
\begin{align}\label{energy equation v_n}
  |v(t_n,f_n)|_{\mathbb{W}}^2=|f_n|_{\mathbb{W}}^2\mathrm{e}^{-\frac{2\nu}{\alpha}t_n}+2\int_0^{t_n}K(v(s,f_n),Q(s))\mathrm{e}^{-\frac{2\nu}{\alpha}(t_n-s)}\,ds,
\end{align}
Letting $n\rightarrow\infty$ in (\ref{energy equation v_n}), together with (\ref{continuous convergence1}) and (\ref{continuous convergence4}), we obtain
\begin{align}
  \lim_{n\rightarrow\infty}|v(t_n,f_n)|_{\mathbb{W}}^2=|f|_{\mathbb{W}}^2\mathrm{e}^{-\frac{2\nu}{\alpha}t}+2\int_0^{t}K(v(s,f),Q(s))\mathrm{e}^{-\frac{2\nu}{\alpha}(t-s)}\,ds.
\end{align}
From the energy equation for $v(t,f)$, we see that
\begin{align}
  \lim_{n\rightarrow\infty}|v(t_n,f_n)|_{\mathbb{W}}^2=|v(t,f)|_{\mathbb{W}}^2,
\end{align}
which together with (\ref{continuous weak W}) yields
\begin{align}
  v(t_n,f_n)\rightarrow v(t,f) \quad\text{strongly in }\mathbb{W}  \text{ as } n\rightarrow\infty.
\end{align}

\end{proof}

\begin{remark}\label{add 0320 remark}
Assume {\bf(F1)}. Then for any $\omega\in\Omega$ and $t\geq 0$, the map $\mathbb{W}\ni f\mapsto v(t,f,\omega)\in\mathbb{W}$ is weakly continuous, i.e. if $f_n$ converges weakly to $f$ in $\mathbb{W}$, then $v(t,f_n,\omega)$ converges weakly to $v(t,f,\omega)$ in $\mathbb{W}$ as $n\rightarrow\infty$. This is followed from (\ref{continuous weak W}).

\end{remark}

We are now in the position to give the proof of Theorem \ref{32.I}.
\begin{proof}[{\bf Proof of Theorem \ref{32.I}}]

Let $v(t,f,\omega)$ be the solution of (\ref{4.b}). Define the required version $u:\mathbb{R}^+\times\mathbb{W}\times\Omega\rightarrow\mathbb{W}$ by
\begin{align}\label{34.a}
 u(t,f,\omega):=Q(t,w)v(t,f,\omega).
\end{align}
By It\^{o}'s formula, it is easy to see that this process $u$ is a solution of (\ref{Abstract}).
Using the similar arguments as the proof of Theorem 3.2(iii) in \cite{2010-Mohammed-p3543-3591}, we can verify the cocycle property of $u$. The continuity of the cocycle follows from (\ref{34.a}) and Proposition \ref{16.6.I}.

\end{proof}

\section{Proof of Theorem \ref{32.II}}


To prove Theorem \ref{32.II}, it suffices to show the Fr\'{e}chet differentiability of $v(t,f,\omega)$ in view of (\ref{34.a}). In this section, we fix $\omega\in\Omega$.
We first give the following estimate of the solution $v(t,f)$ to (\ref{4.b}).
\begin{prp}\label{15.1.I}
Assume {\bf(F1)}, $f\in\mathbb{W}\cap \mathbb{H}^4(\mathcal{O})$ and $g\in\mathbb{W}$. Then for any $\omega\in\Omega$ and $T\geq 0$,
\begin{align}\label{15.1.a}
\sup_{0\leq t\leq T}\big|v(t,f)-v(t,g)|_{\mathbb{W}}^2\leq C(|f|_{\mathbb{H}^4(\mathcal{O})},T,\omega)|f-g|_{\mathbb{W}}^2.
\end{align}
\end{prp}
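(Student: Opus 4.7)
A direct $\mathbb{W}$-energy estimate on the limit equation for $w:=v(t,f)-v(t,g)$ is obstructed by the trilinear term $(\widehat{B}(w,v(t,g)),w)_{\mathbb{W}}$: in order to make it vanish via the identity (\ref{Eq B-06}) one needs $w\in\mathbb{W}\cap\mathbb{H}^4(\mathcal{O})$, whereas only $w\in\mathbb{W}$ is available since $g$ is not assumed to be in $\mathbb{H}^4(\mathcal{O})$. My plan is therefore to argue at the Galerkin level from (\ref{5.a}), where $w_n(t):=v_n(t,f)-v_n(t,g)\in\mathbb{V}_n\subset\mathbb{W}\cap\mathbb{H}^4(\mathcal{O})$ (recall $e_i\in\mathbb{H}^4(\mathcal{O})$), derive a differential inequality for $|w_n|_{\mathbb{W}}^2$, and then pass to the limit using the weak convergence already established in Section 4.

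The heart of the computation is to take the $\mathbb{W}$-inner product of the difference equation for $w_n$ with $w_n$ itself. This is legitimate because $\Pi_n$ agrees with the $\mathbb{W}$-orthogonal projection onto $\mathbb{V}_n$ (the family $\{e_i\}$ is $\mathbb{W}$-orthonormal by (\ref{Basis})). Using the viscous identity recorded in (\ref{16.2}) and the bilinear decomposition $\widehat{B}(v_n(t,f))-\widehat{B}(v_n(t,g))=\widehat{B}(v_n(t,f),w_n)+\widehat{B}(w_n,v_n(t,g))$, the two trilinear contributions are treated asymmetrically. The term $(\widehat{B}(w_n,v_n(t,g)),w_n)_{\mathbb{W}}$ vanishes by (\ref{Eq B-06}) applied with $u=w_n\in\mathbb{W}\cap\mathbb{H}^4(\mathcal{O})$ and $v=v_n(t,g)\in\mathbb{W}$; this is precisely the step that forces the Galerkin detour. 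The other term $(\widehat{B}(v_n(t,f),w_n),w_n)_{\mathbb{W}}$ is expanded by (\ref{Eq B-05}) (now with $u=v_n(t,f)\in\mathbb{H}^4(\mathcal{O})$) and bounded by $C|v_n(t,f)|_{\mathbb{H}^4(\mathcal{O})}|w_n|_{\mathbb{W}}^2$ by pulling out $|w_n|_{L^\infty}$ via the two-dimensional embedding $\mathbb{W}\hookrightarrow L^\infty(\mathcal{O})$. Combining with the cross-term from (\ref{16.2}) (controlled via (\ref{curl and V})) and the Lipschitz difference $\widehat{F}_Q(v_n(t,f),Q)-\widehat{F}_Q(v_n(t,g),Q)$ (whose $\mathbb{W}$-norm is bounded by $C_F|w_n|_{\mathbb{V}}\leq C|w_n|_{\mathbb{W}}$ via (\ref{inverse operator transform inequality}) and \textbf{(F1)}) yields
\[
\tfrac{d}{dt}|w_n(t)|_{\mathbb{W}}^2\leq C\bigl(1+Q(t,\omega)|v_n(t,f)|_{\mathbb{H}^4(\mathcal{O})}\bigr)|w_n(t)|_{\mathbb{W}}^2 .
\]

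Gronwall's inequality together with $|w_n(0)|_{\mathbb{W}}=|\Pi_n(f-g)|_{\mathbb{W}}\leq|f-g|_{\mathbb{W}}$ then gives the Galerkin version of (\ref{15.1.a}); the limit $n\to\infty$ via the weak-$\ast$ convergence of $v_n(\cdot,f)$ and $v_n(\cdot,g)$ in $L^\infty([0,T];\mathbb{W})$ and the weak lower semi-continuity of $|\cdot|_{\mathbb{W}}$ delivers (\ref{15.1.a}). The main obstacle is therefore to establish the uniform-in-$n$ regularity
\[
\sup_{n\in\mathbb{N}}\sup_{t\in[0,T]}|v_n(t,f)|_{\mathbb{H}^4(\mathcal{O})}\leq C(T,|f|_{\mathbb{H}^4(\mathcal{O})},\omega) ,
\]
a random-coefficient counterpart of Lemma \ref{Regularity}; this is exactly where the hypothesis $f\in\mathbb{W}\cap\mathbb{H}^4(\mathcal{O})$ enters, and I would prove it by adapting the higher-regularity argument of \cite{1997-Cioranescu-p317-335} to the Galerkin scheme (\ref{5.a}), exploiting that $Q(\cdot,\omega)$ is positive and bounded on $[0,T]$ for each fixed $\omega\in\Omega$ so that the random coefficients do not spoil the deterministic regularity mechanism.
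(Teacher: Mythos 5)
Your diagnosis of the obstruction to a direct $\mathbb{W}$-energy estimate is accurate, and the algebraic core of your computation---the decomposition $\widehat{B}(a)-\widehat{B}(b)=\widehat{B}(a,w)+\widehat{B}(w,b)$, the vanishing of $(\widehat{B}(w,b),w)_{\mathbb{W}}$ via (\ref{Eq B-06}), and the bound on $(\widehat{B}(a,w),w)_{\mathbb{W}}$ by $C|a|_{\mathbb{H}^4(\mathcal{O})}|w|_{\mathbb{W}}^2$ via (\ref{Eq B-05}) and $\mathbb{W}\hookrightarrow L^\infty(\mathcal{O})$---is exactly the paper's computation, as is the resulting Gronwall inequality. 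Where you part ways is the device used to make (\ref{Eq B-06}) applicable: you retreat to the Galerkin level so that $w_n\in\mathbb{V}_n\subset\mathbb{W}\cap\mathbb{H}^4(\mathcal{O})$ automatically; the paper instead first reduces, by density of $\mathbb{W}\cap\mathbb{H}^4(\mathcal{O})$ in $\mathbb{W}$ together with the continuity from Proposition \ref{16.6.I}, to the case $g\in\mathbb{W}\cap\mathbb{H}^4(\mathcal{O})$, after which $x(t)=v(t,f)-v(t,g)$ is $\mathbb{H}^4(\mathcal{O})$-valued on the limit equation (Lemma \ref{Regularity}), so the $\mathbb{W}$-energy identity (\ref{15.2.a}) and the cancellation (\ref{Eq B-06}) apply directly to $x$.

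The genuine gap in your route is precisely the step you yourself defer as ``the main obstacle'': the uniform bound $\sup_n\sup_{[0,T]}|v_n(t,f)|_{\mathbb{H}^4(\mathcal{O})}\leq C(T,|f|_{\mathbb{H}^4(\mathcal{O})},\omega)$ for the Galerkin approximations. It is not safe to take for granted that this follows by ``adapting'' the higher-regularity argument of \cite{1997-Cioranescu-p317-335} to the scheme (\ref{5.a}): that argument works on the PDE itself and leans on the transport structure of $\mathrm{curl}(v-\alpha\Delta v)$, which $\Pi_n$ destroys; and while (\ref{Basis}) makes $\Pi_n$ the orthogonal projection simultaneously for $(\cdot,\cdot)_{\mathbb{V}}$ and $(\cdot,\cdot)_{\mathbb{W}}$, there is no analogous compatibility with any $\mathbb{H}^4$ scalar product, so not even uniform boundedness of $\Pi_n$ on $\mathbb{H}^4(\mathcal{O})$ is available. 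The paper sidesteps this entirely: after the density reduction it runs (\ref{15.2.a})--(\ref{15.2.d}) on the limit solutions, where the needed bound (\ref{15.3.a}) on $v(\cdot,f)$ is Lemma \ref{Regularity} applied pathwise to (\ref{4.b}) (this is the same ``random-coefficient counterpart'' of the regularity lemma that your route would also need, so there is no saving there), while the $\mathbb{H}^4$ regularity of $v(\cdot,g)$ enters only qualitatively, to justify the chain rule and the identity (\ref{Eq B-06}). Because the resulting constant depends only on $|f|_{\mathbb{H}^4(\mathcal{O})}$, the estimate survives the approximation $g_m\to g$ in $\mathbb{W}$ and yields (\ref{15.1.a}) for general $g\in\mathbb{W}$. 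I would replace the Galerkin detour by this density-plus-limit-equation argument: the detour creates a harder regularity problem than the one it is meant to avoid.
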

\begin{proof}
Since the subset $\mathbb{W}\cap \mathbb{H}^4(\mathcal{O})$ is dense in $\mathbb{W}$, by Proposition \ref{16.6.I}, it suffices to assume $g\in\mathbb{W}\cap \mathbb{H}^4(\mathcal{O})$ in the following proof. Consider the equation (\ref{13.a}) and let $x(t):=v(t,f)-v(t,g)$.
According to Lemma \ref{Regularity}, we see that $\widehat{\mathcal{A}}_Q (v(t,f),Q(t)), \widehat{\mathcal{A}}_Q (v(t,g),Q(t))\in\mathbb{W}$.
%
Using the similar argument as for (\ref{16.1}), we have
\begin{align}\label{15.2.a}
\begin{aligned}
|x(t)|_{\mathbb{W}}^2= |x(0)|_{\mathbb{W}}^2 +2\int_0^t\big(\widehat{\mathcal{A}}_Q (v(s,f),Q(s))-\widehat{\mathcal{A}}_Q (v(s,g),Q(s)),x(s)\big)_{\mathbb{W}}\,ds.
\end{aligned}
\end{align}

\noindent Due to (\ref{Eq B-06}), (\ref{W equivalence norm}), (\ref{curlP=curl}) and (\ref{Eq B-05}), we derive
\begin{align}\label{15.2.b}
& \big(\widehat{\mathcal{A}}_Q (v(s,f),Q(s))-\widehat{\mathcal{A}}_Q (v(s,g),Q(s)),x(s)\big)_{\mathbb{W}}\nonumber\\
=&\, \big(\widehat{F}_Q(v(s,f),Q(s))-\widehat{F}_Q(v(s,g),Q(s)),x(s)\big)_{\mathbb{W}}+ \nu\Big({\rm curl}\big(\Delta x(s)\big),{\rm curl}\big(x(s)-\alpha\Delta x(s)\big)\Big) \nonumber\\
&\,-Q(s)\big(\widehat{B}(v(s,f)),x(s)\big)_{\mathbb{W}}+Q(s)\big(\widehat{B}(v(s,g)),x(s)\big)_{\mathbb{W}}\nonumber\\
=&\, \big(\widehat{F}_Q(v(s,f),Q(s))-\widehat{F}_Q(v(s,g),Q(s)),x(s)\big)_{\mathbb{W}}\nonumber\\ &\,-\frac{\nu}{\alpha}\big|x(s)\big|_{\mathbb{W}}^2 + \frac{\nu}{\alpha}\Big({\rm curl}\big(x(s)\big),{\rm curl}\big(x(s)-\alpha\Delta x(s)\big)\Big) \nonumber\\
&\,-Q(s)\Big(\big(x(s)\cdot\nabla\big){\rm curl}\big(v(s,f)-\alpha\Delta v(s,f)\big),{\rm curl}\big(x(s)-\alpha\Delta x(s)\big)\Big).
\end{align}
Substituting the above equality into (\ref{15.2.a}) gives
\begin{align}
\begin{aligned}
& |x(t)|_{\mathbb{W}}^2+ \frac{2\nu}{\alpha}\int_0^t|x(s)|_{\mathbb{W}}^2\,ds \\
\leq & |f-g|_{\mathbb{W}}^2 + \frac{2\nu}{\alpha}\int_0^t \big|{\rm curl}\big(x(s)\big)\big|\big|x(s)\big|_{\mathbb{W}}\,ds \\ &+ 2\int_0^t \big|\widehat{F}_Q(v(s,f),Q(s))-\widehat{F}_Q(v(s,g),Q(s))\big|_{\mathbb{W}}\big|x(s)\big|_{\mathbb{W}}\,ds \\
& +C\int_0^t Q(s)\big|x(s)\big|_{L^{\infty}(\mathcal{O})}\big|v(s,f)\big|_{\mathbb{H}^{4}(\mathcal{O})}\big|x(s)\big|_{\mathbb{W}}\,ds.
\end{aligned}
\end{align}

\noindent By $2ab\leq \epsilon a^2+ \epsilon^{-1}b^2$, (\ref{curl and V}), {\bf(F1)} and $|x(s)|_{L^{\infty}(\mathcal{O})}\leq C|x(s)|_{\mathbb{W}}$, there holds
\begin{align}\label{15.2.d}
|x(t)|_{\mathbb{W}}^2+ \frac{\nu}{\alpha}\int_0^t|x(s)|_{\mathbb{W}}^2\,ds
\leq |f-g|_{\mathbb{W}}^2 + C\int_0^t \big(1+Q(s)\big|v(s,f)\big|_{\mathbb{H}^{4}(\mathcal{O})}\big)\big|x(s)\big|_{\mathbb{W}}^2\,ds.
\end{align}

\noindent According to Lemma \ref{Regularity}, we see that
\begin{align}\label{15.3.a}
\big|v(\cdot\,,f)\big|_{L^{\infty}([0,T];\mathbb{H}^4(\mathcal{O}))}\leq C(|f|_{\mathbb{H}^4(\mathcal{O})},T,\omega).
\end{align}

\noindent Therefore, applying Gronwall's inequality to (\ref{15.2.d}), we get
\begin{align}
|x(t)|_{\mathbb{W}}^2+ \frac{\nu}{\alpha}\int_0^t|x(s)|_{\mathbb{W}}^2\,ds
\leq  C (|f|_{\mathbb{H}^4(\mathcal{O})},T,\omega)|f-g|_{\mathbb{W}}^2.
\end{align}
The proof of Proposition \ref{15.1.I} is complete.
\end{proof}


Let $f\in\mathbb{W}\cap \mathbb{H}^4(\mathcal{O})$ and $g\in\mathbb{W}$. Consider the following random equation
\begin{align}\label{17.a}
\left\{
\begin{aligned}
& d z(t,f)(g)
=-\nu \widehat{A}z(t,f)(g)\,dt-Q(t)\widehat{B}\big(z(t,f)(g),v(t,f)\big)\,dt \\ &~~~~~~~~~~~~~~~~~\,-Q(t)\widehat{B}\big(v(t,f),z(t,f)(g)\big)\,dt+\mathbb{D}\widehat{F}\big(Q(t)v(t,f)\big)z(t,f)(g)\,dt ,\quad t>0,\\
&z(0,f)(g)=g ,
\end{aligned}
\right.
\end{align}
where $\mathbb{D}\widehat{F}$ is the Fr\'{e}chet derivative of the map $\widehat{F}:\mathbb{V}\rightarrow\mathbb{V}$. By the definition of Fr\'{e}chet derivatives, $\mathbb{D}\widehat{F}(u)v=(I+\alpha A)^{-1}(\mathbb{D}F(u)v)$.
We will prove $z(t,f,\omega)=\mathbb{D}v(t,f,\omega)$, see Step 1 in the proof of Theorem \ref{32.II}. Before this, we need the following proposition.
\begin{prp}\label{Lemma 18.I}
Assume {\bf(F1)}, {\bf(F2)}, $f\in\mathbb{W}\cap \mathbb{H}^4(\mathcal{O})$ and $g\in\mathbb{W}$. Then for any $\omega\in\Omega$, there exists a unique solution $z(\cdot\, ,f,\omega)(g)$ to (\ref{17.a}),
and the following estimates hold
\begin{align}\label{V and W norm estimate of z}
\begin{aligned}
&\sup_{0\leq t\leq T}\big|z(t,f,\omega)(g)\big|_{\mathbb{V}}^2\leq C(|f|_{\mathbb{W}},T,\omega)|g|_{\mathbb{V}}^2, \quad\forall\, T>0,\\
& \sup_{0\leq t\leq T}\big|z(t,f,\omega)(g)\big|_{\mathbb{W}}^2\leq C(|f|_{\mathbb{H}^4(\mathcal{O})},T,\omega)|g|_{\mathbb{W}}^2, \quad\forall\, T>0.
\end{aligned}
\end{align}

\end{prp}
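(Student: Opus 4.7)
My plan is to use a Galerkin scheme on the basis $\{e_i\}$, exploiting the fact that the equation (\ref{17.a}) is \emph{linear} in $z$ so that global existence in $\mathbb{V}_n$ is automatic, and then to derive the two a priori bounds separately before passing to the limit. Set $z_n(t) := \sum_{i=1}^n \beta_i(t) e_i \in \mathbb{V}_n$, the projection of (\ref{17.a}) onto $\mathbb{V}_n$. Since $v(\cdot,f)\in C([0,T];\mathbb{V})$ by Proposition \ref{5.I} and $Q(\cdot)$ is continuous, the resulting ODE for $\beta=(\beta_i)$ is linear with time-continuous coefficients, hence has a unique global solution. Crucially, $\mathbb{V}_n\subset\mathbb{W}\cap\mathbb{H}^4(\mathcal{O})$ since $e_i\in\mathbb{H}^4(\mathcal{O})$, so the special cancellations (\ref{Eq B-06}) are available at the Galerkin level even though we do not know a priori that $z\in\mathbb{H}^4$.

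For the $\mathbb{V}$-estimate, I would take the $\mathbb{V}$-inner product of the Galerkin equation with $z_n$. Using (\ref{A transform 01}) the $\widehat{A}$-term contributes $-\nu\|z_n\|^2\le 0$. The third identity of Lemma \ref{Lem-B-01} makes $(\widehat{B}(v,z_n),z_n)_{\mathbb{V}}$ vanish, while the antisymmetry gives $(\widehat{B}(z_n,v),z_n)_{\mathbb{V}}=-\langle\widehat{B}(z_n,z_n),v\rangle$, which the second bound in Lemma \ref{Lem-B-01} controls by $C|z_n|_{\mathbb{V}}^2|v|_{\mathbb{W}}$. Condition (F1) together with (F2) implies $\|\mathbb{D}F(u)\|_{L(\mathbb{V})}\le C_F$, and combining with (\ref{add 0329.2}) gives $(\mathbb{D}\widehat{F}(Qv)z_n,z_n)_{\mathbb{V}}\le C|z_n|_{\mathbb{V}}^2$. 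Thus
\begin{equation*}
\tfrac{d}{dt}|z_n|_{\mathbb{V}}^2 \le C\bigl(1 + Q(t)|v(t,f)|_{\mathbb{W}}\bigr)|z_n|_{\mathbb{V}}^2,
\end{equation*}
and Gronwall combined with (\ref{W norm estimate of v}) and the boundedness of $Q$ on $[0,T]$ yields the first estimate in (\ref{V and W norm estimate of z}).

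For the $\mathbb{W}$-estimate, assume further $f\in\mathbb{W}\cap\mathbb{H}^4(\mathcal{O})$, so that $|v(\cdot,f)|_{L^\infty([0,T];\mathbb{H}^4(\mathcal{O}))}<\infty$ via (\ref{15.3.a}). Taking the $\mathbb{W}$-inner product of the Galerkin equation with $z_n$ and mimicking the decomposition in (\ref{16.2}), the $\widehat{A}$-term produces $-\tfrac{\nu}{\alpha}|z_n|_{\mathbb{W}}^2$ plus a remainder absorbed by $\tfrac{\nu}{2\alpha}|z_n|_{\mathbb{W}}^2+C|z_n|_{\mathbb{V}}^2$ using (\ref{curl and V}) and Young's inequality. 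The first cubic term is killed by (\ref{Eq B-06}): $(\widehat{B}(z_n,v),z_n)_{\mathbb{W}}=0$, since $z_n\in\mathbb{V}_n\subset\mathbb{H}^4(\mathcal{O})$. The second term is estimated via (\ref{Eq B-05}),
\begin{equation*}
\bigl|(\widehat{B}(v,z_n),z_n)_{\mathbb{W}}\bigr|\le |z_n|_{L^\infty}\,\bigl|\nabla\,\mathrm{curl}(v-\alpha\Delta v)\bigr|\,|z_n|_{\mathbb{W}} \le C|v|_{\mathbb{H}^4(\mathcal{O})}|z_n|_{\mathbb{W}}^2,
\end{equation*}
using the 2D Sobolev embedding $\mathbb{W}\hookrightarrow L^\infty$. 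Finally, (\ref{inverse operator transform inequality}) and (F1) give $|\mathbb{D}\widehat{F}(Qv)z_n|_{\mathbb{W}}\le C|z_n|_{\mathbb{V}}\le C|z_n|_{\mathbb{W}}$. Gronwall, combined with the $\mathbb{V}$-estimate already obtained and the $\mathbb{H}^4$-bound on $v$, yields the second estimate in (\ref{V and W norm estimate of z}).

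Passing to the limit follows the pattern of Proposition \ref{5.I}: uniform bounds in $L^\infty([0,T];\mathbb{W})$ and $L^\infty([0,T];\mathbb{V})$ for $z_n$ and $z_n'$ give weak-$\ast$ limits that solve (\ref{17.a}) and inherit the estimates by lower semi-continuity; the bilinear structure and strong convergence in $L^2([0,T];\mathbb{V})$ handle the $\widehat{B}$-terms, while continuity of $\mathbb{D}\widehat{F}$ handles the inhomogeneity. Uniqueness is immediate: the difference $\tilde z:=z^{(1)}-z^{(2)}$ of two solutions solves (\ref{17.a}) with $g=0$, and applying the $\mathbb{V}$-estimate with zero initial data forces $\tilde z\equiv 0$. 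The principal obstacle is the $\mathbb{W}$-estimate, where the only chance of closure comes from the cancellation (\ref{Eq B-06}), which is why the Galerkin step must be carried out in $\mathbb{V}_n\subset\mathbb{H}^4(\mathcal{O})$ and why the hypothesis $f\in\mathbb{W}\cap\mathbb{H}^4(\mathcal{O})$ (which feeds $|v|_{\mathbb{H}^4(\mathcal{O})}<\infty$ into the surviving cubic term) cannot be dropped at this stage.
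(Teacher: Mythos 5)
Your proof is correct and follows exactly the route the paper intends: the paper omits the details, referring only to Propositions \ref{5.I} and \ref{15.1.I}, which amounts to the Galerkin construction plus the $\mathbb{V}$- and $\mathbb{W}$-energy estimates you carry out, with the $\mathbb{V}$-estimate closed by the antisymmetry of $\widehat{B}$ and the $\mathbb{W}$-estimate closed by (\ref{Eq B-06}) together with the $\mathbb{H}^4$-bound on $v$ from (\ref{15.3.a}). Your observations that $\mathbb{V}_n\subset\mathbb{W}\cap\mathbb{H}^4(\mathcal{O})$ makes (\ref{Eq B-06}) available at the Galerkin level, and that uniqueness falls out of the $\mathbb{V}$-estimate applied to the difference, are precisely the ``little adaptation'' the paper alludes to.
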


\begin{proof}
We omit the details, since the proof is similar to that of Proposition \ref{5.I} and Proposition \ref{15.1.I} just with a little adaptation.

\end{proof}

\begin{proof}[Proof of Theorem \ref{32.II}]
As pointed out at the beginning of this section, by (\ref{34.a}), we only need to show that
for any $\omega\in\Omega$ and $t\geq 0$, $v(t,\cdot,\omega): \mathbb{W}\ni f\mapsto v(t,f,\omega)\in\mathbb{V}$ is continuously Fr\'{e}chet differentiable on $\mathbb{W}\cap \mathbb{H}^4(\mathcal{O})$.

The proof is divided into two steps. In step 1, we prove the Fr\'{e}chet differentiability of $v$ and $\mathbb{D}v(t,f,\omega)=z(t,f,\omega)$. In step 2, we prove the continuity of the Fr\'{e}chet derivatives.

Step 1. Fix $T>0$, $\omega\in\Omega$, and $f\in\mathbb{W}\cap \mathbb{H}^4(\mathcal{O})$. Let $g\in\mathbb{W}$ and $h\in\mathbb{R}\verb|\|\{0\}$.
Set
\begin{align}
  X(s,f,g,h):=\frac{v(s,f+hg)-v(s,f)}{h}-z(s,f)(g) ,\quad s\in[0,T].
\end{align}

\noindent It follows from (\ref{4.b}) and (\ref{17.a}) that
\begin{align}\label{24.a}
 dX(s,f,g,h)=-\nu\widehat{A}X(s,f,g,h)\,ds -Q(s)\Upsilon(s)\,ds +\Gamma(s)\,ds,
\end{align}

\noindent where
\begin{align*}
&\begin{aligned}
  \Upsilon(s):=&\frac{-\widehat{B}\big(v(s,f+hg)\big) +\widehat{B}\big(v(s,f)\big)}{h} +\widehat{B}\big(z(s,f)(g),v(s,f)\big) +\widehat{B}\big(v(s,f),z(s,f)(g)\big),
&\end{aligned}
\\
&\begin{aligned}
  \Gamma(s):=&\frac{\widehat{F}_Q\big(v(s,f+hg),Q(s)\big)-\widehat{F}_Q\big(v(s,f),Q(s)\big)}{h} -\mathbb{D}\widehat{F}\big(Q(s)v(s,f)\big)z(s,f)(g).
&\end{aligned}
\end{align*}

\noindent Applying the chain rule to (\ref{24.a}) gives
\begin{align}\label{add 0414.1}
   \big|X(t,f,g,h)\big|_{\mathbb{V}}^2 =& -2\nu\int_0^t\big\|X(s,f,g,h)\big\|^2\,ds  -2\int_0^t Q(s)\big\langle\Upsilon(s),X(s,f,g,h)\big\rangle\,ds \nonumber\\
& +2\int_0^t \Big\langle\frac{\widehat{F}_Q\big(v(s,f+hg),Q(s)\big)-\widehat{F}_Q\big(v(s,f),Q(s)\big)}{h} \nonumber\\
& ~~~~~~~~~~~~-\mathbb{D}\widehat{F}\big(Q(s)v(s,f)\big)\frac{v(s,f+hg)-v(s,f)}{h},X(s,f,g,h)\Big\rangle\,ds \nonumber\\
& +2\int_0^t\big\langle\mathbb{D}\widehat{F}\big(Q(s)v(s,f)\big)X(s,f,g,h),X(s,f,g,h)\big\rangle\,ds .
\end{align}

\noindent Due to Lemma \ref{Lem-B-01}, by a simple calculation, we have
\begin{align}
\begin{aligned}\label{24.b}
  \big\langle\Upsilon(s),X(s,f,g,h)\big\rangle   =& -\big\langle\widehat{B}\big(X(s,f,g,h),v(s,f)\big),X(s,f,g,h)\big\rangle \\
& +h\big\langle\widehat{B}\big(z(s,f)(g),X(s,f,g,h)\big),z(s,f)(g)\big\rangle \\
& +h\big\langle\widehat{B}\big(X(s,f,g,h)\big),z(s,f)(g)\big\rangle,
\end{aligned}
\end{align}

\noindent From (\ref{add 0414.1}), (\ref{24.b}) and Lemma \ref{Lem-B-01}, we obtain
\begin{align}\label{25.b}
& \big|X(t,f,g,h)\big|_{\mathbb{V}}^2 +2\nu\int_0^t\big\|X(s,f,g,h)\big\|^2\,ds \nonumber\\
\leq & \ C\int_0^t Q(s)\big|X(s,f,g,h)\big|_{\mathbb{V}}^2\big|v(s,f)\big|_{\mathbb{W}}\,ds  +C\int_0^t Q(s)h\big|z(s,f)(g)\big|_{\mathbb{W}}^2\big|X(s,f,g,h)\big|_{\mathbb{V}}\,ds \nonumber\\
& +C\int_0^t Q(s)h\big|X(s,f,g,h)\big|_{\mathbb{V}}^2\big|z(s,f)(g)\big|_{\mathbb{W}}\,ds +\Phi(t)  +\int_0^t\big|X(s,f,g,h)\big|_{\mathbb{V}}^2\,ds \nonumber\\ &+2\int_0^t\big\|\mathbb{D}\widehat{F}\big(Q(s)v(s,f)\big)\big\|_{L(\mathbb{V})} \big|X(s,f,g,h)\big|_{\mathbb{V}}^2\,ds ,
\end{align}

\noindent where
\begin{align}
\Phi(t)
:=\int_0^t\bigg|&\frac{\widehat{F}_Q\big(v(s,f+hg),Q(s)\big)-\widehat{F}_Q\big(v(s,f),Q(s)\big)}{h} \nonumber\\
 &-\mathbb{D}\widehat{F}\big(Q(s)v(s,f)\big)\frac{v(s,f+hg)-v(s,f)}{h}\bigg|_{\mathbb{V}}^2\,ds .
\end{align}

\noindent {\bf(F1)} and {\bf(F2)} imply that
\begin{align}\label{DF is bounded}
\big\|\mathbb{D}F\big(Q(s)v(s,f)\big)\big\|_{L(\mathbb{V})}\leq C, \quad \forall\, s\in[0,T]  .
\end{align}
\noindent Note $h|z(s,f)(g)|_{\mathbb{W}}^2|X(s,f,g,h)|_{\mathbb{V}} \leq \frac{1}{2}h^2|z(s,f)(g)|_{\mathbb{W}}^2 +\frac{1}{2}|z(s,f)(g)|_{\mathbb{W}}^2|X(s,f,g,h)|_{\mathbb{V}}^2$. Applying Gronwall's inequality to (\ref{25.b}), we get
\begin{align}\label{26.b}
\begin{aligned}
& \sup_{0\leq t\leq T}\big|X(t,f,g,h)\big|_{\mathbb{V}}^2 +2\nu\int_0^T\big\|X(s,f,g,h)\big\|^2\,ds \\
\leq & \Big(Ch^2\int_0^T Q(s)\big|z(s,f)(g)\big|_{\mathbb{W}}^2\,ds +\Phi(T)\Big)
\times\exp\Big(C\int_0^T Q(s)\big|v(s,f)\big|_{\mathbb{W}}\,ds \\
&~~+C\int_0^T Q(s)\big|z(s,f)(g)\big|_{\mathbb{W}}^2\,ds
+Ch\int_0^T Q(s)|z(s,f)(g)\big|_{\mathbb{W}}\,ds +CT\Big).
\end{aligned}
\end{align}

\noindent By (\ref{14.a}), we have
\begin{align}\label{add 0414.2}
 \lim_{h\rightarrow0}\sup_{|g|_{\mathbb{W}}\leq 1}\Phi(T)
\leq &  \lim_{h\rightarrow0}\sup_{|g|_{\mathbb{W}}\leq1}
\bigg\{\int_0^T\int_0^1\Big\|\mathbb{D}\widehat{F}\Big(Q(s)\big[v(s,f)+\rho\big(v(s,f+hg)-v(s,f)\big)\big]\Big) \nonumber\\ &\quad\quad\quad\quad\ \ -\mathbb{D}\widehat{F}\big(Q(s)v(s,f)\big)\Big\|_{L(\mathbb{V})}^2 \left|\frac{v(s,f+hg)-v(s,f)}{h}\right|_{\mathbb{V}}^2\,d\rho ds\bigg\} \nonumber\\
\leq & C(|f|_{\mathbb{W}},T,\omega)\lim_{h\rightarrow0}\sup_{|g|_{\mathbb{W}}\leq 1}
\bigg\{\int_0^T\int_0^1\Big\|\mathbb{D}\widehat{F}\Big(Q(s)\big[v(s,f)\nonumber\\
&\quad\quad +\rho\big(v(s,f+hg)-v(s,f)\big)\big]\Big) -\mathbb{D}\widehat{F}\big(Q(s)v(s,f),s\big)\Big\|_{L(\mathbb{V})}^2\,d\rho ds\bigg\} \nonumber\\
= & 0,
\end{align}
where we have used (\ref{DF is bounded}), {\bf(F2)} and the dominated convergence theorem in the last step.
Therefore, from (\ref{26.b}), (\ref{W norm estimate of v}), (\ref{V and W norm estimate of z}) and (\ref{add 0414.2}), we obtain
\begin{align}
\lim_{h\rightarrow0}\sup_{|g|_{\mathbb{W}}\leq 1} \left\{\sup_{0\leq t\leq T}\left|X(t,f,g,h)\right|_{\mathbb{V}}^2 +2\nu\int_0^T\left\|X(s,f,g,h)\right\|\,ds\right\}=0 .
\end{align}
In particular, for any $0\leq t\leq T$,
\begin{align}
\lim_{h\rightarrow0}\sup_{|g|_{\mathbb{W}}\leq 1} \left|\frac{v(t,f+hg)-v(t,f)}{h}-z(t,f)(g)\right|_{\mathbb{V}}=0.
\end{align}
Therefore, by the arbitrariness of $T>0$, we conclude that for any $\omega\in\Omega$ and $t\geq 0$, if $f\in\mathbb{W}\cap \mathbb{H}^4(\mathcal{O})$, then the map $\mathbb{W}\ni f\mapsto v(t,f,\omega)\in\mathbb{V}$ is Fr\'{e}chet differentiable at $f$, and the Fr\'{e}chet derivative $\mathbb{D}v(t,f,\omega)=z(t,f,\omega)$.

Step 2. We show that for any $\omega\in\Omega$ and $t\geq 0$, the map $\mathbb{W}\ni f\rightarrow z(t,f)\in L(\mathbb{W},\mathbb{V})$ is continuous on $\mathbb{W}\cap \mathbb{H}^4(\mathcal{O})$,
i.e. for any $f\in\mathbb{W}\cap \mathbb{H}^4(\mathcal{O})$, $\{f_n\}_{n=1}^{\infty}\subset\mathbb{W}\cap \mathbb{H}^4(\mathcal{O})$ and $f_n\rightarrow f$ strongly in $\mathbb{W}$, we will prove $\|z(t,f_n)-z(t,f)\|_{L(\mathbb{W},\mathbb{V})}\rightarrow 0$.
Let $g\in\mathbb{W}$ and $|g|_{\mathbb{W}}\leq 1$. By (\ref{17.a}), we have for any $T>0$ and $t\in[0,T]$,
\begin{align}\label{27.b}
&\ \left|z(t,f_n)(g)-z(t,f)(g)\right|_{\mathbb{V}}^2 \nonumber\\
=& -2\nu\int_0^t\left\|z(s,f_n)(g)-z(s,f)(g)\right\|^2\,ds \nonumber\\
& -2\int_0^t Q(s)\big\langle\widehat{B}\big(z(s,f_n)(g)-z(s,f)(g),v(s,f_n)\big),z(s,f_n)(g)-z(s,f)(g)\big\rangle\,ds \nonumber\\
& -2\int_0^t Q(s)\big\langle\widehat{B}\big(z(s,f)(g),v(s,f_n)-v(s,f)\big),z(s,f_n)(g)-z(s,f)(g)\big\rangle\,ds \nonumber\\
& -2\int_0^t Q(s)\big\langle\widehat{B}\big(v(s,f_n),z(s,f_n)(g)-z(s,f)(g)\big),z(s,f_n)(g)-z(s,f)(g)\big\rangle\,ds \nonumber\\
& -2\int_0^t Q(s)\big\langle\widehat{B}\big(v(s,f_n)-v(s,f),z(s,f)(g)\big),z(s,f_n)(g)-z(s,f)(g)\big\rangle\,ds \nonumber\\
& +2\int_0^t \Big(\mathbb{D}\widehat{F}\big(Q(s)v(s,f_n)\big)\big(z(s,f_n)(g)-z(s,f)(g)\big),z(s,f_n)(g)-z(s,f)(g)\Big)_{\mathbb{V}}\,ds \nonumber\\
& +2\int_0^t \Big(\big[\mathbb{D}\widehat{F}\big(Q(s)v(s,f_n)\big)-\mathbb{D}\widehat{F}\big(Q(s)v(s,f)\big)\big]z(s,f)(g),z(s,f_n)(g)-z(s,f)(g)\Big)_{\mathbb{V}}\,ds \nonumber\\
=& -2\nu\int_0^t\big\|z(s,f_n)(g)-z(s,f)(g)\big\|^2\,ds +I_1+I_2+I_3+I_4+I_5+I_6 .
\end{align}
Now we estimate terms $I_1,I_2,I_3,I_4,I_5,I_6$.
It follows from Lemma \ref{Lem-B-01} that
\begin{align}
&&&\begin{aligned}
|I_1|\leq C\int_0^t Q(s)\big|z(s,f_n)(g)-z(s,f)(g)\big|_{\mathbb{V}}^2\big|v(s,f_n)\big|_{\mathbb{W}}\,ds ,
&&&\end{aligned}
\\
&&&\begin{aligned}
|I_2|\leq  & C\int_0^t Q(s)\big|z(s,f)(g)\big|_{\mathbb{W}} \big|z(s,f_n)(g)-z(s,f)(g)\big|_{\mathbb{V}}\big|v(s,f_n)-v(s,f)\big|_{\mathbb{W}}\,ds \\
\leq & C\int_0^t Q(s)\big|z(s,f)(g)\big|_{\mathbb{W}}^2\big|z(s,f_n)(g)-z(s,f)(g)\big|_{\mathbb{V}}^2\,ds  \\ & +C\int_0^t Q(s)\big|v(s,f_n)-v(s,f)\big|_{\mathbb{W}}^2\,ds ,
&&&\end{aligned}
\end{align}
\noindent $I_3=0$, $|I_4|$ has the same estimate as $|I_2|$. Obviously,
\begin{align}
&&&\begin{aligned}
|I_5|\leq 2\int_0^t\left\|\mathbb{D}\widehat{F}\big(Q(s)v(s,f_n)\big)\right\|_{L(\mathbb{V})}
\left|z(s,f_n)(g)-z(s,f)(g)\right|_{\mathbb{V}}^2\,ds ,
&&&\end{aligned}
\\
&&&\begin{aligned}
|I_6|\leq & 2\int_0^t\left\|\mathbb{D}\widehat{F}\big(Q(s)v(s,f_n)\big)-\mathbb{D}\widehat{F}\big(Q(s)v(s,f)\big)\right\|_{L(\mathbb{V})} \\ &~~~~~~~~\times\left|z(s,f)(g)\right|_{\mathbb{V}}\left|z(s,f_n)(g)-z(s,f)(g)\right|_{\mathbb{V}}\,ds\\
\leq & \int_0^t\left\|\mathbb{D}\widehat{F}\big(Q(s)v(s,f_n)\big)-\mathbb{D}\widehat{F}\big(Q(s)v(s,f)\big)\right\|_{L(\mathbb{V})}^2 ds \\ &+\int_0^t \left|z(s,f)(g)\right|_{\mathbb{V}}^2\left|z(s,f_n)(g)-z(s,f)(g)\right|_{\mathbb{V}}^2 ds .
&&&\end{aligned}
\end{align}

\noindent Substituting the above estimates of $I_1$-$I_6$ into (\ref{27.b}) gives
\begin{align}
& \left|z(t,f_n)(g)-z(t,f)(g)\right|_{\mathbb{V}}^2+2\nu\int_0^t\left\|z(t,f_n)(g)-z(t,f)(g)\right\|^2\,ds \nonumber\\
\leq &\, C\int_0^t Q(s)\big|v(s,f_n)-v(s,f)\big|_{\mathbb{W}}^2\,ds  +\int_0^t\left\|\mathbb{D}\widehat{F}\big(Q(s)v(s,f_n)\big) -\mathbb{D}\widehat{F}\big(Q(s)v(s,f)\big)\right\|_{L(\mathbb{V})}^2\,ds \nonumber\\
& +\int_0^t\Big[CQ(s)\left|v(s,f_n)\right|_{\mathbb{W}} +CQ(s)\left|z(s,f)(g)\right|_{\mathbb{W}}^2 +2\left\|\mathbb{D}\widehat{F}\big(Q(s)v(s,f_n)\big)\right\|_{L(\mathbb{V})} \nonumber\\
&~~~~~~~~~+\left|z(s,f)(g)\right|_{\mathbb{V}}^2\Big]\left|z(s,f_n)(g)-z(s,f)(g)\right|_{\mathbb{V}}^2\,ds .
\end{align}

\noindent Applying Gronwall's inequality, and using (\ref{W norm estimate of v}), (\ref{15.1.a}), (\ref{V and W norm estimate of z}) and $|g|_{\mathbb{W}}\leq 1$, we have
\begin{align}
& \sup_{0\leq t\leq T}\left|z(t,f_n)(g)-z(t,f)(g)\right|_{\mathbb{V}}^2 +2\nu\int_0^T\left\|z(t,f_n)(g)-z(t,f)(g)\right\|^2\,ds \nonumber\\
\leq & \bigg(C\int_0^T Q(s)\big|v(s,f_n)-v(s,f)\big|_{\mathbb{W}}^2\,ds  \nonumber\\ &~~+\int_0^T\left\|\mathbb{D}\widehat{F}\big(Q(s)v(s,f_n)\big) -\mathbb{D}\widehat{F}\big(Q(s)v(s,f)\big)\right\|_{L(\mathbb{V})}^2\,ds \bigg) \nonumber\\
&\times\exp\bigg(\int_0^T\Big[CQ(s)\left|v(s,f_n)\right|_{\mathbb{W}} +CQ(s)\left|z(s,f)(g)\right|_{\mathbb{W}}^2  \nonumber\\ &~~~~~~~~~~~+2\left\|\mathbb{D}\widehat{F}\big(Q(s)v(s,f_n)\big)\right\|_{L(\mathbb{V})}
 +\left|z(s,f)(g)\right|_{\mathbb{V}}^2\Big]\,ds\bigg) \nonumber\\
\leq & \bigg(C(|f|_{\mathbb{H}(\mathcal{O})}^4,T,\omega)|f_n-f|_{\mathbb{W}}^2  +\int_0^T\left\|\mathbb{D}\widehat{F}\big(Q(s)v(s,f_n)\big) -\mathbb{D}\widehat{F}\big(Q(s)v(s,f)\big)\right\|_{L(\mathbb{V})}^2\,ds\bigg) \nonumber\\
&\  \times C(|f_n|_{\mathbb{W}},|f|_{\mathbb{H}^4(\mathcal{O})},|f|_{\mathbb{W}},T,\omega).
\end{align}
\noindent Therefore, by the dominated convergence theorem, we obtain
\begin{align}
\lim_{n\rightarrow \infty}\sup_{0\leq t\leq T}\left\|z(t,f_n)-z(t,f)\right\|_{L(\mathbb{W},\mathbb{V})} =0 .
\end{align}
Since $T>0$ is arbitrary, we conclude that for any $\omega\in\Omega$ and $t\geq 0$, the map $\mathbb{W}\ni f\mapsto z(t,f,\omega)\in L(\mathbb{W},\mathbb{V})$
is continuous on the subset $\mathbb{W}\cap \mathbb{H}^4(\mathcal{O})$.

\end{proof}



\section{Proof of Theorem \ref{32.III}}

The asymptotic compactness of the random dynamical system in Theorem \ref{32.I} is established in Lemma \ref{add 0322 asymptotically compact}. According to Lemma \ref{randomattractor}, the existence of random attractors is followed immediately from Lemma \ref{add 0324 lemma.1} and Lemma \ref{add 0322 asymptotically compact}. The upper semi-continuity of random attractors is proved in Lemma \ref{add 0409.9}.

In the following Lemma \ref{add 0328.1}-\ref{add 0322 asymptotically compact}, we focus on the existence of random attractors, so we fix $\epsilon\in\mathbb{R}\backslash\{0\}$.
We first establish a supplementary property of the cocycle $u(t,f,\omega)$.
Recall (\ref{add 0328.3})-(\ref{add 0406.2}).
Consider the following equation with random coefficients,
\begin{align}\label{add 0328.1}
\left\{
\begin{aligned}
& dv(t,s,f)=\widehat{\mathcal{A}}_Q (v(t,s,f),Q(t))\,dt,\quad t> s, \\
& v(s,s,f)=f\quad \text{in}\ \mathbb{W}.
\end{aligned}
\right.
\end{align}
By the same arguments as for (\ref{4.b}), it is easy to prove the existence and uniqueness of (\ref{add 0328.1}) for every $\omega\in\Omega$.
The solution of (\ref{add 0328.1}) is denoted by $v(t,s,f,\omega)$, which is slightly different from the solution $v(t,f,\omega)$ to (\ref{4.b}), obviously $v(t,0,f,\omega)=v(t,f,\omega)$.

\begin{lem}\label{add 0328 lemma.1}
Assume {\bf(F1)}.  Let $u$ be the random dynamical system in Theorem \ref{32.I}, i.e. $u(t,f,\omega)=Q(t,\omega)v(t,f,\omega)$ is the version constructed in Section 4, then
  \begin{align}\label{add 0328.2}
    u(t,f,\theta(s,\omega))=Q(t+s,\omega)v(t+s,s,Q(s,\omega)^{-1}f,\omega), \quad\forall\, t\geq 0,\, s\in\mathbb{R}.
  \end{align}
\end{lem}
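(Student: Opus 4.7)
The plan is to use uniqueness of solutions to the random PDE (\ref{4.b}) together with the cocycle property of the shift $\theta$ on $Q$. Specifically, $Q(t,\theta(s,\omega)) = \mathrm{e}^{\epsilon(\omega(t+s)-\omega(s))} = Q(t+s,\omega)/Q(s,\omega)$, so writing $Q_s := Q(s,\omega)$, $\widetilde{Q}_t := Q(t,\theta(s,\omega))$ we have the identity $Q_{t+s} = Q_s\widetilde{Q}_t$. By the definition (\ref{34.a}), the right-hand side of (\ref{add 0328.2}) equals $Q_{t+s}\,V(t+s)$, where $V(r) := v(r,s,Q_s^{-1}f,\omega)$ solves (\ref{add 0328.1}), while the left-hand side equals $\widetilde{Q}_t\,v(t,f,\theta(s,\omega))$. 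So, dividing out $\widetilde{Q}_t$, it suffices to show that
\[
W(t) := Q_s\, V(t+s), \qquad t\ge 0,
\]
coincides with $v(\cdot,f,\theta(s,\omega))$, the solution of (\ref{4.b}) for the shifted noise.

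First I would verify the initial condition: $W(0) = Q_s V(s) = Q_s \cdot Q_s^{-1} f = f$, matching $v(0,f,\theta(s,\omega)) = f$. Next I would record the elementary scaling identity satisfied by $\widehat{\mathcal{A}}_Q$: for any constant $c>0$, scalar $q>0$, and $u\in\mathbb{W}$,
\[
c\,\widehat{\mathcal{A}}_Q(u/c,\,cq) = \widehat{\mathcal{A}}_Q(u,\,q),
\]
which follows immediately from linearity of $\widehat{A}$, bilinearity of $\widehat{B}$, and the definition $\widehat{F}_Q(u/c,cq) = \widehat{F}(qu)/(cq) = c^{-1}\widehat{F}_Q(u,q)$ via $\widehat{F}_Q(u,q)=\widehat{F}(qu)/q$.

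With this identity in hand, I compute
\[
dW(t) = Q_s\,dV(t+s) = Q_s\,\widehat{\mathcal{A}}_Q\bigl(V(t+s),\,Q_{t+s}\bigr)\,dt
= Q_s\,\widehat{\mathcal{A}}_Q\bigl(W(t)/Q_s,\,Q_s\widetilde{Q}_t\bigr)\,dt
= \widehat{\mathcal{A}}_Q\bigl(W(t),\,\widetilde{Q}_t\bigr)\,dt,
\]
where the last equality is the scaling identity with $c=Q_s$, $q=\widetilde{Q}_t$. Hence $W$ satisfies (\ref{4.b}) with driving $Q(t,\theta(s,\omega))$ and initial datum $f$. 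By the uniqueness part of Proposition \ref{5.I} (applied with $\omega$ replaced by $\theta(s,\omega)$), $W(t) = v(t,f,\theta(s,\omega))$. Multiplying by $\widetilde{Q}_t$ and using $\widetilde{Q}_t Q_s = Q_{t+s}$ yields (\ref{add 0328.2}).

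There is no serious obstacle here; the only point requiring care is making sure the scaling identity for $\widehat{\mathcal{A}}_Q$ is applied with the correct constants (the variable $q$ appears both multiplying $\widehat{B}$ and inside $\widehat{F}_Q$), and that uniqueness in Proposition \ref{5.I} is invoked for the shifted trajectory $\theta(s,\omega)$ rather than $\omega$ itself, which is legitimate since that proposition holds pathwise for every $\omega\in\Omega$.
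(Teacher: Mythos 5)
Your proof is correct and follows essentially the same route as the paper: rewrite both sides via the cocycle identity $Q(t+s,\omega)=Q(t,\theta(s,\omega))Q(s,\omega)$, show that the two candidate solutions satisfy the same random ODE with the same initial value, and conclude by the pathwise uniqueness established in Proposition \ref{5.I}. Your explicit isolation of the scaling identity $c\,\widehat{\mathcal{A}}_Q(u/c,cq)=\widehat{\mathcal{A}}_Q(u,q)$ is a slightly cleaner way to organize the algebra that the paper instead carries out term-by-term (multiplying by $Q(s,\omega)^{-1}$ and invoking linearity of $\widehat{A}$, bilinearity of $\widehat{B}$, and the definition of $\widehat{F}_Q$), but the underlying argument is the same.
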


\begin{proof}
  From (\ref{add 0328.3}), we see that $Q$ also has the cocycle property
  \begin{align}\label{add 0328.4}
    Q(t+s,\omega)=Q(t,\theta(s,\omega))Q(s,\omega) .
  \end{align}
So
\begin{align}
u(t,f,\theta(s,\omega))=Q(t,\theta(s,\omega))v(t,f,\theta(s,\omega))
=Q(t+s,\omega)Q(s,\omega)^{-1}v(t,f,\theta(s,\omega)).
\end{align}
Thus to prove (\ref{add 0328.2}), it is equivalent to prove the following identity
\begin{align}\label{add 0328.5}
  Q(s,\omega)^{-1}v(t,f,\theta(s,\omega))=v(t+s,s,Q(s,\omega)^{-1}f,\omega), \quad\forall\, t\geq 0, s\in\mathbb{R}.
\end{align}

\noindent Now we define the processes
\begin{align}
  y(t,\omega):&=Q(s,\omega)^{-1}v(t,f,\theta(s,\omega)),\quad t\geq 0,\\
  \overline{y}(t,\omega):&=v(t+s,s,Q(s,\omega)^{-1}f,\omega),\quad t\geq 0.
\end{align}
Shifting the time-variable $t$ by $s$ in (\ref{add 0328.1}), we see that $\overline{y}(t,\omega)$ satisfies
\begin{align}
\begin{aligned}
  \overline{y}(t,\omega)=& Q(s,\omega)^{-1}f + \int_0^t \widehat{\mathcal{A}}_Q \Big(\overline{y}(r,\omega),Q(r+s,\omega)\Big)dr \\
  =& Q(s,\omega)^{-1}f + \int_0^t \Big[ -\nu\widehat{A}\overline{y}(r,\omega) -Q(r+s,\omega)\widehat{B}\big(\overline{y}(r,\omega)\big) \\ &~~~~~~~~~~~~~~~~~~~~~~~~+\frac{1}{Q(r+s,\omega)}\widehat{F}\big(Q(r+s,\omega)\overline{y}(r,\omega)\big)\Big] dr,\quad t\geq 0.
\end{aligned}
\end{align}
On the other hand, since $v(t,f,\omega)$ satisfies (\ref{4.b}), we have
\begin{align}\label{add 0415.1}
\begin{aligned}
  v(t,f,\theta(s,\omega))=& f+\int_0^t \widehat{\mathcal{A}}_Q \Big(v(r,f,\theta(s,\omega)),Q(r,\theta(s,\omega))\Big)dr \\
  =& f+\int_0^t \Big[-\nu\widehat{A}v(r,f,\theta(s,\omega)) -Q(r,\theta(s,\omega))\widehat{B}\big(v(r,f,\theta(s,\omega))\big) \\
  &~~~~~~~~~~~+\frac{1}{Q(r,\theta(s,\omega))}\widehat{F}\big(Q(r,\theta(s,\omega))v(r,f,\theta(s,\omega))\big)\Big] dr,\quad t\geq 0.
\end{aligned}
\end{align}
Multiplying (\ref{add 0415.1}) by $Q(s,\omega)^{-1}$, then using (\ref{add 0328.4}) and the bilinear property of $\widehat{B}$, we obtain
\begin{align}
\begin{aligned}
  y(t,\omega)=&Q(s,\omega)^{-1}f +\int_0^t \Big[-\nu\widehat{A}y(r,\omega) -Q(r+s,\omega)\widehat{B}\big(y(r,\omega)\big) \\
  &~~~~~~~~~~~~~~~~~~~~~~~~+\frac{1}{Q(r+s,\omega)}\widehat{F}\big(Q(r+s,\omega)y(s,\omega)\big)\Big]dr,\quad t\geq 0.
\end{aligned}
\end{align}
Hence $y(t,\omega)$ and $\overline{y}(t,\omega)$ satisfy the same equation.
Using a similar calculation as for (\ref{14.a}), we deduce that $y(t,\omega)=\overline{y}(t,\omega)$ for every $t\geq 0$, $\omega\in\Omega$. This proves (\ref{add 0328.5}), so (\ref{add 0328.2}) holds.

\end{proof}

%
%
%



\begin{lem}\label{add 0324 lemma.1}
Assume {\bf(F3)}. Then the random dynamical system in Theorem \ref{32.I} admits a closed bounded tempered random absorbing set in $\mathbb{W}$.
\end{lem}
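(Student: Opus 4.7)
The plan is to combine Lemma \ref{add 0328 lemma.1}, which rewrites the shifted cocycle in terms of the backward-time solution $v(0,-t,\cdot,\omega)$, with dissipative energy estimates in both $\mathbb{V}$ and $\mathbb{W}$ that exploit the smallness condition in \textbf{(F3)}. Specifically, taking $s=-t$ in (\ref{add 0328.2}) gives $u(t,f,\theta(-t,\omega)) = v(0,-t,Q(-t,\omega)^{-1}f,\omega)$, so it suffices to bound $|v(0,-t,g,\omega)|_{\mathbb{W}}$ uniformly in $g = Q(-t,\omega)^{-1}f$ as $f$ ranges over a bounded tempered random set.

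First I would establish a $\mathbb{V}$-dissipation estimate for $v(\cdot,s,g)$. As in (\ref{6.add}), the $\widehat{B}$-term vanishes by Lemma \ref{Lem-B-01}, leaving
\[
\tfrac{d}{dr}|v(r,s,g)|_{\mathbb{V}}^2 + 2\nu\|v(r,s,g)\|^2 = 2Q(r)^{-1}\big(F(Q(r)v(r,s,g)),v(r,s,g)\big).
\]
Using \textbf{(F1)}, the Poincar\'e inequality (\ref{Poincare}), the smallness $C_F<\nu/\mathcal{P}^2$ of \textbf{(F3)}, and Young's inequality, one extracts a strictly positive rate $\gamma$ so that
\[
\tfrac{d}{dr}|v(r,s,g)|_{\mathbb{V}}^2 + \gamma|v(r,s,g)|_{\mathbb{V}}^2 \leq C Q(r)^{-2}|F(0)|_{\mathbb{V}}^2,
\]
from which Duhamel gives $|v(r,s,g)|_{\mathbb{V}}^2 \leq |g|_{\mathbb{V}}^2 e^{-\gamma(r-s)} + C|F(0)|_{\mathbb{V}}^2\int_s^r Q(\tau)^{-2} e^{-\gamma(r-\tau)} d\tau$.

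Next I would upgrade this to a $\mathbb{W}$-estimate. The differential form of Lemma \ref{add 0323 lemma.1} reads $\tfrac{d}{dr}|v|_{\mathbb{W}}^2 + \tfrac{2\nu}{\alpha}|v|_{\mathbb{W}}^2 = 2K(v,Q)$; Cauchy--Schwarz applied to the expression (\ref{add 0323.2}) for $K$, combined with (\ref{curl and V}), \textbf{(F1)}, and Young's inequality (absorbing one half of the $|v|_{\mathbb{W}}^2$ term into the dissipation), yields
\[
\tfrac{d}{dr}|v|_{\mathbb{W}}^2 + \tfrac{\nu}{\alpha}|v|_{\mathbb{W}}^2 \leq C\big(|v|_{\mathbb{V}}^2 + Q^{-2}|F(0)|_{\mathbb{V}}^2\big).
\]
Substituting the $\mathbb{V}$-estimate and integrating from $s=-t$ to $r=0$ produces
\[
|v(0,-t,g,\omega)|_{\mathbb{W}}^2 \leq |g|_{\mathbb{W}}^2 e^{-\frac{\nu}{\alpha}t} + C|g|_{\mathbb{V}}^2 (1+t) e^{-\bar\gamma t} + \Xi(\omega),
\]
with $\bar\gamma := \min(\gamma,\nu/\alpha)>0$ and $\Xi(\omega) := C|F(0)|_{\mathbb{V}}^2\int_{-\infty}^0 (1+|\tau|) Q(\tau,\omega)^{-2} e^{\bar\gamma\tau}\, d\tau$.

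Finally I would set $R(\omega)^2 := 1+\Xi(\omega)$ and $G(\omega) := \{x\in\mathbb{W} : |x|_{\mathbb{W}} \leq R(\omega)\}$. Since $W(\tau,\omega)/|\tau|\to 0$ almost surely as $|\tau|\to\infty$, $Q(\tau,\omega)^{-2} = e^{-2\epsilon W(\tau,\omega)}$ has subexponential growth in $|\tau|$, so $\Xi(\omega)<\infty$ a.s.; applying the same subexponential argument along $\theta(-s,\omega)$ shows $R$ is tempered, hence $G$ is closed, bounded and tempered. For a bounded tempered random set $D$ with radius $R_D$, setting $g = Q(-t,\omega)^{-1}f$ with $f \in D(\theta(-t,\omega))$, we have $|g|_{\mathbb{W}} \leq Q(-t,\omega)^{-1} R_D(\theta(-t,\omega))$, which together with subexponential growth of $Q(-t,\omega)^{-1}$ and temperedness of $R_D$ makes the first two transient terms vanish as $t\to\infty$, producing an absorption time $t_D(\omega)$ after which $u(t,D(\theta(-t,\omega)),\theta(-t,\omega))\subset G(\omega)$.

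The main obstacle is the coupling in the $\mathbb{W}$-estimate: the forcing $K$ is only linear in $|v|_{\mathbb{W}}$, so to keep the dissipation rate strictly positive after Young's inequality one must carefully split the coefficient $2\nu/\alpha$, which in turn means the forcing must be controlled by the $\mathbb{V}$-bound that is available only because of the gap condition $C_F<\nu/\mathcal{P}^2$ in \textbf{(F3)}. The secondary technical point, verifying that $\Xi$ itself is tempered, relies on the pathwise sublinear growth of the Brownian path under the Wiener shift.
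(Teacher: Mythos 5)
Your proposal matches the paper's proof essentially step for step: use Lemma \ref{add 0328 lemma.1} to rewrite $u(t,f,\theta(-t,\omega))$ as $v(0,-t,Q(-t,\omega)^{-1}f,\omega)$, derive a $\mathbb{V}$-dissipation estimate with strictly positive rate from the gap condition in \textbf{(F3)}, lift it to a $\mathbb{W}$-estimate via the differential form of the energy equation in Lemma \ref{add 0323 lemma.1}, and close via sub-exponential growth of the Brownian path (law of the iterated logarithm) to get temperedness of the absorbing ball and vanishing of the transient terms. The only cosmetic difference is your safety factor $(1+t)$ from taking $\bar\gamma=\min(\gamma,\nu/\alpha)$; the paper notes $\nu/\alpha$ strictly exceeds the $\mathbb{V}$-rate $\lambda$ and so dispenses with it, but this does not affect the conclusion.
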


\begin{proof}

Take any tempered random set $D(\omega)$. Let $f\in D(\theta(-t,\omega))$ for $t>0$. By Lemma \ref{add 0328 lemma.1}, we have
\begin{align}\label{add 0320.1}
  u(t,f,\theta(-t,\omega))=v(0,-t,Q(-t,\omega)^{-1}f,\omega).
\end{align}

\noindent From (\ref{add 0328.1}), it follows that
\begin{align}
\begin{aligned}
   d|v(r)|_{\mathbb{V}}^2=& 2 \big\langle \widehat{\mathcal{A}}_Q (v(r),Q(r,\omega)), v(r)\big\rangle\,dr \\ =&-2\nu\|v(r)\|^2 dr + 2Q(r,\omega)^{-1}\big\langle \widehat{F}(Q(r,\omega)v(r)), v(r)\big\rangle\,dr,
\end{aligned}
\end{align}

\noindent here $v(r)=v(r,-t,Q(-t,\omega)^{-1}f,\omega)$ for notational simplicity.
Using (\ref{Poincare}), (\ref{add 0329.2}), {\bf(F3)} and Young's inequality, we get
\begin{align}\label{add 0322.2}
   d|v(r)|_{\mathbb{V}}^2+\big(\frac{2\nu}{\mathcal{P}^2+\alpha}-2\frac{\mathcal{P}^2}{\mathcal{P}^2+\alpha} C_F -\varepsilon_1\big)|v(r)|_{\mathbb{V}}^2 dr\leq \frac{1}{\varepsilon_1}|F(0)|_{\mathbb{V}}^2 Q(r,\omega)^{-2}dr,
\end{align}
for any $\varepsilon_1 > 0$. Set $\lambda=\varepsilon_1 =\frac{\nu}{\mathcal{P}^2+\alpha}-\frac{\mathcal{P}^2}{\mathcal{P}^2+\alpha} C_F$. According to {\bf(F3)}, $\lambda=\varepsilon_1>0$.
Now solving (\ref{add 0322.2}) with the initial condition yields
\begin{align}\label{add 0319.2}
  |v(s,-t,Q(-t,\omega)^{-1}f,\omega)|_{\mathbb{V}}^2\leq \mathrm{e}^{-\lambda (s+t)}\frac{1}{Q(-t,\omega)^2}|f|_{\mathbb{V}}^2  + \frac{1}{\varepsilon_1}|F(0)|_{\mathbb{V}}^2\int_{-t}^s \mathrm{e}^{-\lambda (s-r)}\frac{1}{Q(r,\omega)^2}dr.
\end{align}

\noindent On the other hand,
since $\frac{d}{dr}v(r,-t,Q(-t,\omega)^{-1}f,\omega)$ and $\frac{d}{dr}v(r,f,\omega)$ satisfy the same equation,
it follows from (\ref{energy equation}) and (\ref{add 0323.2}) that 
$\frac{d}{dr}|v(r,-t,Q(-t,\omega)^{-1}f,\omega)|_{\mathbb{W}}^2$ also satisfies
\begin{align}
\begin{aligned}
  d|v(r)|_{\mathbb{W}}^2=&-\frac{2\nu}{\alpha}|v(r)|_{\mathbb{W}}^2 dr +\frac{2\nu}{\alpha}\Big({\rm curl}\big(v(r)\big),{\rm curl}\big(v(r)-\alpha\Delta v(r)\big)\Big)dr \\
  &+ 2\Big(Q(r,\omega)^{-1}\widehat{F}\big(Q(r,\omega)v(r)\big),v(r)\Big)_{\mathbb{W}}dr.
  \end{aligned}
\end{align}

\noindent Using (\ref{curl and V}), (\ref{inverse operator transform inequality}), {\bf(F3)} and Young's inequality, we have
\begin{align}\label{add 0322.3}
  d|v(r)|_{\mathbb{W}}^2+\big(\frac{2\nu}{\alpha}-\varepsilon_2-\varepsilon_3\big)|v(r)|_{\mathbb{W}}^2 dr\leq \frac{C(C_F)}{\varepsilon_2}|v(r)|_{\mathbb{V}}^2dr + \frac{C}{\varepsilon_3}|F(0)|_{\mathbb{V}}^2 Q(r,\omega)^{-2}dr,
\end{align}
for any $\varepsilon_2, \varepsilon_3 >0$. Set $\varepsilon_2=\varepsilon_3=\frac{\nu}{2\alpha}$ and $\gamma=\frac{\nu}{\alpha}$.
Then solving (\ref{add 0322.3}) with the initial condition yields
\begin{align}\label{add 0319.3}
\begin{aligned}
  &|v(0,-t,Q(-t,\omega)f,\omega)|_{\mathbb{W}}^2\\
  \leq &\mathrm{e}^{-\gamma t}\frac{1}{Q(-t,\omega)^2}|f|_{\mathbb{W}}^2 + \frac{C(C_F)}{\varepsilon_2}\int_{-t}^0 \mathrm{e}^{\gamma s}|v(s,-t,Q(-t,\omega)f,\omega)|_{\mathbb{V}}^2ds \\  &+ \frac{C}{\varepsilon_3}|F(0)|_{\mathbb{V}}^2\int_{-t}^0 \mathrm{e}^{\gamma s}\frac{1}{Q(s,\omega)^2}ds.
\end{aligned}
\end{align}

\noindent Substituting (\ref{add 0319.2}) into (\ref{add 0319.3}), together with $\gamma>\lambda>0$, we obtain
\begin{align}\label{add 0319.1}
\begin{aligned}
  &|v(0,-t,Q(-t,\omega)^{-1}f,\omega)|_{\mathbb{W}}^2\\
  \leq &\mathrm{e}^{-\gamma t}\frac{1}{Q(-t,\omega)^2}|f|_{\mathbb{W}}^2 + C(C_F)\int_{-t}^0 \mathrm{e}^{\gamma s}\Big[\mathrm{e}^{-\lambda (s+t)}\frac{1}{Q(-t,\omega)^{2}}|f|_{\mathbb{V}}^2 \\
   &+ C(C_F)|F(0)|_{\mathbb{V}}^2\int_{-t}^s \mathrm{e}^{-\lambda (s-r)}\frac{1}{Q(r,\omega)^2}dr\Big]ds
  + C|F(0)|_{\mathbb{V}}^2\int_{-t}^0 \mathrm{e}^{\gamma s}\frac{1}{Q(s,\omega)^2}ds \\
  \leq & C(C_F)\mathrm{e}^{-\lambda t}\frac{1}{Q(-t,\omega)^2}|f|_{\mathbb{W}}^2 + C(C_F)|F(0)|_{\mathbb{V}}^2\int_{-\infty}^0 \mathrm{e}^{\lambda s}\frac{1}{Q(s,\omega)^2}ds.
\end{aligned}
\end{align}


\noindent Note $\lambda=\frac{\nu-\mathcal{P}^2 C_F}{\mathcal{P}^2+\alpha}$. Set
\begin{align}\label{add 0409.3}
\begin{aligned}
r(\omega):=1+ C(C_F)|F(0)|_{\mathbb{V}}^2\int_{-\infty}^0 \mathrm{e}^{\frac{\nu-\mathcal{P}^2 C_F }{\mathcal{P}^2+\alpha} s}\frac{1}{Q(s,\omega)^2}ds.
\end{aligned}
\end{align}


\noindent Since $D(\omega)$ is tempered and $f\in D(\theta(-t,\omega))$, by the law of the iterated logarithm, we obtain that for a.e. $\omega\in\Omega$ and any $\beta>0$,
\begin{align}\label{add 0409.5}
  \mathrm{e}^{-\beta t}Q(-t,\omega)^2 d\big(D(\theta(-t,\omega))\big)\rightarrow 0, \quad \text{ as } t\rightarrow +\infty.
\end{align}
Therefore, we conclude from (\ref{add 0320.1}) and (\ref{add 0319.1}) that the random ball $B(r(\omega)):=\{x: |x|_{\mathbb{W}}^2\leq r(\omega)\}$ absorbs every tempered random set of $\mathbb{W}$, and obviously $B(r(\omega))$ is a closed bounded tempered random set.

\end{proof}

To prove the asymptotic compactness of the random dynamical system, we first give the following energy equation satisfied by $u(t,f,\omega)$, which can be easily derived from the energy equation for $v(t,f,\omega)$ in Lemma \ref{add 0323 lemma.1}.
\begin{align}\label{add 0323.3}
\begin{aligned}
  &|u(t,f,\omega)|_{\mathbb{W}}^2=Q(t,\omega)^2|v(t,f,\omega)|_{\mathbb{W}}^2 \\
  =& Q(t,\omega)^2\Big[|f|_{\mathbb{W}}^2\mathrm{e}^{-\frac{2\nu}{\alpha}t}+2\int_0^t \mathrm{e}^{-\frac{2\nu}{\alpha}(t-s)} K\big(v(s,f,\omega),Q(s,\omega)\big)ds\Big] \\
  =& Q(t,\omega)^2\Big[|f|_{\mathbb{W}}^2\mathrm{e}^{-\frac{2\nu}{\alpha}t}+2\int_0^t \mathrm{e}^{-\frac{2\nu}{\alpha}(t-s)} \widetilde{K}\big(u(s,f,\omega),Q(s,\omega)\big)ds\Big].
  \end{aligned}
\end{align}
where $\widetilde{K}\big(u(s,f,\omega),Q(s,\omega)\big)=K\big(Q(s,\omega)^{-1}u(s,f,\omega),Q(s,\omega)\big)$ and the function $K$ is given in (\ref{add 0323.2}).
This will play an important role in the proof of the following lemma.

\begin{lem}\label{add 0322 asymptotically compact}
Assume {\bf(F3)}. Then the random dynamical system in Theorem \ref{32.I} is asymptotically compact in $\mathbb{W}$.
\end{lem}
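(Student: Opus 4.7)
The plan is to apply the random version of the Moise--Rosa--Wang energy-equation method. Fix $\omega$ in a set of full measure, let $D$ be a bounded tempered random set, $t_n\to\infty$ and $x_n\in D(\theta(-t_n,\omega))$. Set $u_n:=u(t_n,x_n,\theta(-t_n,\omega))$. By Lemma \ref{add 0324 lemma.1}, the $u_n$ eventually lie in the absorbing ball $\{|\cdot|_{\mathbb{W}}^{2}\le r(\omega)\}$, so we extract a subsequence with $u_n\rightharpoonup u^{*}$ weakly in $\mathbb{W}$. Strong convergence will follow once I show $|u_n|_{\mathbb{W}}\to|u^{*}|_{\mathbb{W}}$.

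For each fixed $T>0$ and $n$ large, the cocycle property gives $u_n=u(T,\xi_n^{T},\theta(-T,\omega))$ with $\xi_n^{T}:=u(t_n-T,x_n,\theta(-t_n,\omega))$, and applying the absorbing property at $\theta(-T,\omega)$ yields $|\xi_n^{T}|_{\mathbb{W}}^{2}\le r(\theta(-T,\omega))$. A Cantor diagonal argument along $T=1,2,3,\dots$ produces a further subsequence (still denoted $n$) with $\xi_n^{T}\rightharpoonup\xi^{T}$ weakly in $\mathbb{W}$ for every integer $T$. By the weak continuity of $v(T,\cdot,\theta(-T,\omega))$ from Remark \ref{add 0320 remark}, carried over to $u=Qv$ since $Q$ is scalar, uniqueness of weak limits gives $u^{*}=u(T,\xi^{T},\theta(-T,\omega))$ for every such $T$.

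Next I would apply the energy equation (\ref{add 0323.3}), shifted to $\theta(-T,\omega)$, to the sequence $u_n$:
\begin{align*}
|u_n|_{\mathbb{W}}^{2}=Q(T,\theta(-T,\omega))^{2}\Big[|\xi_n^{T}|_{\mathbb{W}}^{2}e^{-\tfrac{2\nu}{\alpha}T}+2\int_0^{T}e^{-\tfrac{2\nu}{\alpha}(T-s)}\widetilde{K}\big(u(s,\xi_n^{T},\theta(-T,\omega)),Q(s,\theta(-T,\omega))\big)\,ds\Big].
\end{align*}
By the compact embedding $\mathbb{W}\hookrightarrow\mathbb{V}$, $\xi_n^{T}\to\xi^{T}$ strongly in $\mathbb{V}$; combining with the weak-$\mathbb{W}$ continuity of the solution map and the uniform $\mathbb{W}$-bound on the trajectories, $u(s,\xi_n^{T},\cdot)$ converges to $u(s,\xi^{T},\cdot)$ strongly in $\mathbb{V}$ and weakly in $\mathbb{W}$ for each $s\in[0,T]$. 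The integrand $\widetilde{K}$ pairs $\mathrm{curl}(Q^{-1}u)+\mathrm{curl}(Q^{-1}F(u))$ (strongly convergent in $L^{2}$ thanks to \textbf{(F3)}) against $\mathrm{curl}(Q^{-1}u-\alpha\Delta(Q^{-1}u))$ (weakly convergent in $L^{2}$); the pairing therefore converges pointwise in $s$, and a uniform $\mathbb{W}$-bound on $u(s,\xi_n^{T},\cdot)$ allows dominated convergence inside the integral. Using weak lower semi-continuity of $|\cdot|_{\mathbb{W}}$ on the $\xi_n^{T}$ term and the analogous energy equation for $u^{*}=u(T,\xi^{T},\theta(-T,\omega))$, I obtain
\begin{align*}
\limsup_{n\to\infty}|u_n|_{\mathbb{W}}^{2}\le|u^{*}|_{\mathbb{W}}^{2}+Q(T,\theta(-T,\omega))^{2}\,r(\theta(-T,\omega))\,e^{-\tfrac{2\nu}{\alpha}T}.
\end{align*}
Letting $T\to\infty$ and invoking $Q(T,\theta(-T,\omega))=e^{-\epsilon W(-T,\omega)}$, the explicit formula (\ref{add 0409.3}) for $r$, the strict inequality $\lambda<\tfrac{2\nu}{\alpha}$ ensured by \textbf{(F3)}, and the law of the iterated logarithm, the residual term tends to $0$. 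Hence $\limsup_n|u_n|_{\mathbb{W}}^{2}\le|u^{*}|_{\mathbb{W}}^{2}$, which together with weak lower semi-continuity yields $|u_n|_{\mathbb{W}}\to|u^{*}|_{\mathbb{W}}$; in the Hilbert space $\mathbb{W}$, weak convergence plus norm convergence gives strong convergence.

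The main obstacle will be the limit passage in the nonlinear integrand $\widetilde{K}$: it depends on exploiting the strong/weak pairing structure implicit in the $\mathbb{W}$-inner product (one factor sits in $\mathbb{V}$-regularity and benefits from compact embedding plus the Lipschitz property of $F$, while the other is only weakly convergent in $L^{2}$). A secondary, though essential, subtlety is controlling the tempered factor $Q(T,\theta(-T,\omega))^{2}r(\theta(-T,\omega))$ against the dissipation rate $e^{-2\nu T/\alpha}$; this is exactly where the smallness assumption $C_F<\nu/\mathcal{P}^{2}$ in \textbf{(F3)} is used.
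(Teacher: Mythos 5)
Your proposal is correct and follows essentially the same route as the paper: extract a weak limit $u^*$ from the absorbing ball, construct (via the diagonal method) intermediate states $\xi^T$ at times $-T$, identify $u^* = u(T,\xi^T,\theta(-T,\omega))$ through the weak continuity of the cocycle in the initial datum, subtract the energy equation for $u^*$ from the $\limsup$ of the energy equation for the $u_n$'s, and let $T\to\infty$ using the law of the iterated logarithm to kill the residual $Q(T,\theta(-T,\omega))^2\,r(\theta(-T,\omega))\,e^{-2\nu T/\alpha}$. The only minor technical departure is your pointwise-in-$s$ convergence of $\widetilde{K}$ followed by dominated convergence, whereas the paper passes to strong $L^2([0,T];\mathbb{V})$ and weak $L^2([0,T];\mathbb{W})$ convergence of the trajectories; both are fine. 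One small imprecision: the inequality $\lambda<2\nu/\alpha$ holds unconditionally (since $\lambda\le\nu/(\mathcal{P}^2+\alpha)<\nu/\alpha$), and what \textbf{(F3)} actually buys you is $\lambda>0$, which is what makes $r(\omega)$ a finite tempered random variable.
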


\begin{proof}
Take any tempered random set $D$.
We will show that for a.e. $\omega\in\Omega$, the sequence $\{u(t_n,f_n,\theta(-t_n,\omega))\}_{n=1}^{\infty}$ has a convergent subsequence in $\mathbb{W}$, whenever $t_n\rightarrow\infty$ and $f_n\in D(\theta(-t_n,\omega))$.

In Lemma \ref{add 0324 lemma.1}, we have proved that the random ball $B(r(\omega))$ absorbs $D$. So for a.e. $\omega$, the sequence $\{u(t_n,f_n,\theta(-t_n,\omega))\}_{n=1}^{\infty}$ is bounded in $\mathbb{W}$. Therefore, there exist $f_{\omega}\in B(r(\omega))$ and a subsequence denoted by $\{n_k(\omega,0)\}$ of $\mathbb{N}$, which depend on $\omega$, such that
\begin{align}\label{add 0323.1}
  u(t_{n_k(\omega,0)},f_{n_k(\omega,0)},\theta(-t_{n_k(\omega,0)},\omega))\rightharpoonup f_{\omega} \quad\text{weakly in }\mathbb{W} \text{ as }k\rightarrow\infty.
\end{align}
Hence to prove Lemma \ref{add 0322 asymptotically compact}, it suffices to show that there exists a subsequence of $\{n_k(\omega,0)\}$ denoted by $\{n_k(\omega)\}$, such that
\begin{align}
  |u(t_{n_k(\omega)},f_{n_k(\omega)},\theta(-t_{n_k(\omega)},\omega))|_{\mathbb{W}}\rightarrow |f_{\omega}|_{\mathbb{W}},\quad \text{ as }k\rightarrow\infty.
\end{align}
We prove this with the following three steps.

Step 1.
Let $j\in\mathbb{N}, j\geq 1$. If $\{n_k(\omega,j-1)\}$ and $f_{\theta(-(j-1),\omega)}$ are already defined, then we define $\{n_k(\omega,j)\}$ and $f_{\theta(-j,\omega)}$ as follows.
Note
\begin{align}\label{add 0407.1}
\begin{aligned}
 &u\Big(t_{n_k(\omega,j-1)}-j,f_{n_k(\omega,j-1)},\theta(-t_{n_k(\omega,j-1)},\omega)\Big) \\
  = & u\Big(t_{n_k(\omega,j-1)}-j,f_{n_k(\omega,j-1)},\theta\big(j-t_{n_k(\omega,j-1)},\theta(-j,\omega)\big)\Big),
\end{aligned}
\end{align}
and
\begin{align}\label{add 0411.1}
  f_{n_k(\omega,j-1)}\in D\big(\theta(-t_{n_k(\omega,j-1)},\omega)\big) =D\big(\theta(j-t_{n_k(\omega,j-1)},\theta(-j,\omega))\big) .
\end{align}

\noindent Since $B(r(\omega))$ absorbs $D$, the sequence $\{u(t_{n_k(\omega,j-1)}-j,f_{n_k(\omega,j-1)},\theta(-t_{n_k(\omega,j-1)},\omega))\}_{k=1}^{\infty}$ is bounded in $\mathbb{W}$. Hence there exist a subsequence of $\{n_k(\omega,j-1)\}$, for notational simplicity we write it as $\{n_k(\omega,j)\}$, and $f_{\theta(-j,\omega)}\in B\big(r(\theta(-j,\omega))\big)$  such that
\begin{align}\label{add 0320.4}
  u\Big(t_{n_k(\omega,j)}-j,f_{n_k(\omega,j)},\theta(-t_{n_k(\omega,j)},\omega)\Big)\rightharpoonup f_{\theta(-j,\omega)} \quad\text{weakly in }\mathbb{W} \text{ as }k\rightarrow\infty .
\end{align}

\noindent Now using the diagonal method, we construct a sequence as $n_k(\omega):=n_k(\omega,k)$.

%
%
%
%
%
%

Step 2.  By the cocycle property of the random dynamical system, we have for any $\omega\in\Omega$, $j\in\mathbb{N}$ and $t_{n_k(\omega)}\geq j$,
%
%
\begin{align}\label{add 0320.2}
\begin{aligned}
 & u\Big(j, u\big(t_{n_k(\omega)}-j,f_{n_k(\omega)},\theta(-t_{n_k(\omega)},\omega)\big), \theta(-j,\omega)\Big)\\
  = & u\Big(j, u\big(t_{n_k(\omega)}-j,f_{n_k(\omega)},\theta(-t_{n_k(\omega)},\omega)\big), \theta\big(t_{n_k(\omega)}-j,\theta(-t_{n_k(\omega)},\omega)\big)\Big)\\
  = & u\Big(t_{n_k(\omega)},f_{n_k(\omega)},\theta(-t_{n_k(\omega)},\omega)\Big).
 \end{aligned}
\end{align}
From (\ref{34.a}) and Remark \ref{add 0320 remark}, it follows that for any $\omega\in\Omega$ and $t\geq 0$, $u(t,f,\omega)$ is weakly continuous in $f\in\mathbb{W}$.
Due to (\ref{add 0320.4}), we deduce that the first line of (\ref{add 0320.2}) is weakly convergent to $u(j,f_{\theta(-j,\omega)},\theta(-j,\omega))$ as $k\rightarrow\infty$. On the other hand, (\ref{add 0323.1}) implies that the last line of (\ref{add 0320.2}) is weakly convergent to $f_{\omega}$ as $k\rightarrow\infty$. Therefore, we obtain that for a.e. $\omega\in\Omega$,
\begin{align}\label{add 0320.6}
  u\big(j,f_{\theta(-j,\omega)},\theta(-j,\omega)\big)=f_{\omega},\quad \forall\, j\in\mathbb{N}.
\end{align}



Step 3.  In view of (\ref{add 0320.2}) and the energy equation  (\ref{add 0323.3}), we get for any $\omega\in\Omega$, $j\in\mathbb{N}$ and $t_{n_k(\omega)}\geq j$,
\begin{align}\label{add 0320.5}
\begin{aligned}
  &\Big| u\Big(t_{n_k(\omega)},f_{n_k(\omega)},\theta(-t_{n_k(\omega)},\omega)\Big)\Big|_{\mathbb{W}}^2 \\
  =& Q\big(j,\theta(-j,\omega)\big)^2\bigg[\big|u\big(t_{n_k(\omega)} -j,f_{n_k(\omega)},\theta(-t_{n_k(\omega)},\omega)\big)\big|_{\mathbb{W}}^2\mathrm{e}^{-\frac{2\nu}{\alpha}j} \\
  & +2\int_0^j \mathrm{e}^{-\frac{2\nu}{\alpha}(j-s)} \widetilde{K}\bigg(u\Big(s,u\big(t_{n_k(\omega)} -j,f_{n_k(\omega)},\theta(-t_{n_k(\omega)},\omega)\big),\theta(-j,\omega)\Big),Q\big(s,\theta(-j,\omega)\big)\bigg)ds\bigg].
\end{aligned}
\end{align}

\noindent Since the random ball $B(r(\omega))$ absorbs $D$, similar to (\ref{add 0407.1}) and (\ref{add 0411.1}), we see that
\begin{align}\label{add 0323.5}
  \limsup_{k\rightarrow\infty}|u\big(t_{n_k(\omega)} -j,f_{n_k(\omega)},\theta(-t_{n_k(\omega)},\omega)\big)\big|_{\mathbb{W}}^2\leq r(\theta(-j,\omega)).
\end{align}


\noindent By (\ref{add 0320.4}), similar to (\ref{continuous convergence1})-(\ref{continuous convergence4}), we can show that for any $\omega\in\Omega$, $j\in\mathbb{N}$,
\begin{align}\label{add 0323.4}
\begin{aligned}
   u\Big(\cdot,u\big(t_{n_k(\omega)} -j,f_{n_k(\omega)},\theta(-t_{n_k(\omega)},\omega)\big),\theta(-j,\omega)\Big)
  \rightarrow  u\big(\cdot,f_{\theta(-j,\omega)},\theta(-j,\omega)\big),
\end{aligned}
\end{align}
weakly in $L^{2}([0,T];\mathbb{W})$ and strongly in $L^{2}([0,T];\mathbb{V})$ as $k\rightarrow\infty$.
Therefore, letting $k\rightarrow\infty$ in (\ref{add 0320.5}), together with (\ref{add 0323.5}) and (\ref{add 0323.4}) yields
\begin{align}\label{add 0320.7}
\begin{aligned}
  &\limsup_{k\rightarrow\infty}\Big| u\Big(t_{n_k(\omega)},f_{n_k(\omega)},\theta(-t_{n_k(\omega)},\omega)\Big)\Big|_{\mathbb{W}}^2 \\
  \leq & Q\big(j,\theta(-j,\omega)\big)^2\bigg[r\big(\theta(-j,\omega)\big)\mathrm{e}^{-\frac{2\nu}{\alpha}j} \\ &+2\int_0^j \mathrm{e}^{-\frac{2\nu}{\alpha}(j-s)} \widetilde{K}\Big( u\big(s,f_{\theta(-j,\omega)},\theta(-j,\omega)\big),Q\big(s,\theta(-j,\omega)\big)\Big)ds\bigg] .
  \end{aligned}
\end{align}

\noindent On the other hand,  it follows from (\ref{add 0320.6}) and (\ref{add 0323.3}) that for a.e. $\omega\in\Omega$,
\begin{align}\label{add 0320.8}
\begin{aligned}
\big|f_{\omega}\big|_{\mathbb{W}}^2 =& \big|u\big(j,f_{\theta(-j,\omega)},\theta(-j,\omega)\big)\big|_{\mathbb{W}}^2 \\
=& Q\big(j,\theta(-j,\omega)\big)^2\bigg[\big|f_{\theta(-j,\omega)}\big|_{\mathbb{W}}^2 \mathrm{e}^{-\frac{2\nu}{\alpha}j} \\  &+2\int_0^j \mathrm{e}^{-\frac{2\nu}{\alpha}(j-s)} \widetilde{K}\Big( u\big(s,f_{\theta(-j,\omega)},\theta(-j,\omega)\big),Q\big(s,\theta(-j,\omega)\big)\Big)ds\bigg], \quad\forall\,j\in\mathbb{N}.
\end{aligned}
\end{align}

\noindent Subtracting (\ref{add 0320.8}) from (\ref{add 0320.7}) gives
\begin{align}\label{add 0323.6}
\begin{aligned}
  &\limsup_{k\rightarrow\infty}\Big|u\Big(t_{n_k(\omega)},f_{n_k(\omega)},\theta(-t_{n_k(\omega)},\omega)\Big)\Big|_{\mathbb{W}}^2 -\big|f_{\omega}\big|_{\mathbb{W}}^2 \\
  \leq & Q\big(j,\theta(-j,\omega)\big)^2\Big[r\big(\theta(-j,\omega)\big) - |f_{\theta(-j,\omega)}\big|_{\mathbb{W}}^2 \Big]\mathrm{e}^{-\frac{2\nu}{\alpha}j} , \quad\forall\, j\in\mathbb{N} .
\end{aligned}
\end{align}
Since $f_{\theta(-j,\omega)}\in B\big(r(\theta(-j,\omega))\big)$ and $B(r(\omega))$ is tempered, by the law of the iterated logarithm, we deduce that the right hand side of (\ref{add 0323.6}) tends to $0$ as $j\rightarrow +\infty$. Therefore, we obtain
\begin{align}
  \limsup_{k\rightarrow\infty}\Big|u\Big(t_{n_k(\omega)},f_{n_k(\omega)},\theta(-t_{n_k(\omega)},\omega)\Big)\Big|_{\mathbb{W}}^2 \leq\big|f_{\omega}\big|_{\mathbb{W}}^2.
\end{align}
On the other hand, in view of (\ref{add 0323.1}), we have
\begin{align}
  \big|f_{\omega}\big|_{\mathbb{W}}^2 \leq \liminf_{k\rightarrow\infty}\Big|u\Big(t_{n_k(\omega)},f_{n_k(\omega)},\theta(-t_{n_k(\omega)},\omega)\Big)\Big|_{\mathbb{W}}^2.
\end{align}
Hence for a.e. $\omega\in\Omega$, the subsequence $\{u\big(t_{n_k(\omega)},f_{n_k(\omega)},\theta(-t_{n_k(\omega)},\omega)\big)\}_{k=1}^{\infty}$ is strongly convergent to $f_{\omega}$ in $\mathbb{W}$. This completes the proof of Lemma \ref{add 0322 asymptotically compact}.

\end{proof}

Finally, we present the upper semi-continuity of the random attractors as $\epsilon\rightarrow 0$. To this end, we first establish the convergence of the solution of (\ref{Abstract}) as $\epsilon\rightarrow 0$.

In the following two lemmas, To indicate the dependence on $\epsilon$, we write the solution $u$ in Theorem \ref{32.I}, the notation $Q$ in (\ref{add 0328.3}), the solution $v$ of (\ref{4.b}) as  $Q^{\epsilon}, u^{\epsilon}, v^{\epsilon}$ respectively.
We denote by the map $u^0: \mathbb{R}^+ \times \mathbb{W}\rightarrow\mathbb{W}$ the deterministic continuous dynamical system generated by the solutions $u^0(t,f)$ of equation (\ref{Abstract}) with $\epsilon=0$.
\begin{lem}\label{add 0409.6}
Assume {\bf(F1)}. Let $\{\epsilon_n\}_{n=1}^{\infty}$ be any positive sequence such that $\epsilon_n\rightarrow \epsilon$ for some $\epsilon\in\mathbb{R}$. Take any sequence $\{f_n\}_{n=1}^{\infty}\subset\mathbb{W}$ and $f\in\mathbb{W}$. If $f_n\rightarrow f$ strongly in $\mathbb{W}$ as $n\rightarrow\infty$, then for any $\omega\in\Omega$ and $t\geq 0$,
  \begin{align}\label{add 0412.1}
    u^{\epsilon_n}(t,f_n,\omega)\rightarrow u^{\epsilon}(t,f,\omega) \quad\text{ strongly in } \mathbb{W} \text{ as } n\rightarrow\infty .
  \end{align}
If $f_n\rightharpoonup f$ weakly in $\mathbb{W}$ as $n\rightarrow\infty$, then for any $\omega\in\Omega$ and $t\geq 0$,
\begin{align}\label{add 0412.2}
    u^{\epsilon_n}(t,f_n,\omega)\rightharpoonup u^{\epsilon}(t,f,\omega) \quad\text{ weakly in } \mathbb{W} \text{ as } n\rightarrow\infty .
  \end{align}
In (\ref{add 0412.1}) and (\ref{add 0412.2}), we set $u^0(t,f,\omega):=u^0(t,f)$ as $\epsilon=0$.
\end{lem}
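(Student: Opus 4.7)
The plan is to reduce the statement to a convergence result for $v^{\epsilon_n}$, since $u^{\epsilon_n}(t,f_n,\omega)=Q^{\epsilon_n}(t,\omega)v^{\epsilon_n}(t,f_n,\omega)$ and, for every fixed $\omega$, $Q^{\epsilon_n}(t,\omega)=\mathrm{e}^{\epsilon_n W(t,\omega)}\to Q^{\epsilon}(t,\omega)$ uniformly on compact time intervals, with the reciprocals uniformly bounded below. So it suffices to prove that $v^{\epsilon_n}(t,f_n,\omega)$ converges to $v^{\epsilon}(t,f,\omega)$ strongly (resp.\ weakly) in $\mathbb{W}$. I will mimic the three-step strategy used in the proof of Proposition \ref{16.6.I}, the main new feature being that both the coefficient $Q^{\epsilon_n}$ and the initial datum $f_n$ vary.

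In either of the two cases, $\{f_n\}$ is bounded in $\mathbb{W}$ (weak convergence implies boundedness), so Proposition \ref{5.I} and the argument leading to (\ref{11.c}) yield
\begin{align*}
\sup_n\sup_{0\leq s\leq T}\bigl|v^{\epsilon_n}(s,f_n,\omega)\bigr|_{\mathbb{W}}+\sup_n\sup_{0\leq s\leq T}\Bigl|\tfrac{d}{ds}v^{\epsilon_n}(s,f_n,\omega)\Bigr|_{\mathbb{V}}\leq C(T,\omega).
\end{align*}
Standard compactness (Aubin--Lions together with the compact embedding $\mathbb{W}\hookrightarrow\mathbb{V}$) gives a subsequence, still indexed by $n$, converging weakly-$*$ in $L^{\infty}([0,T];\mathbb{W})$, strongly in $L^{2}([0,T];\mathbb{V})$, and with derivative converging weakly-$*$ in $L^{\infty}([0,T];\mathbb{V})$, to some limit $\widetilde v$. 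Passing to the limit in (\ref{4.b}) with parameter $\epsilon_n$ identifies $\widetilde v$ as the unique solution of (\ref{4.b}) with parameter $\epsilon$ and initial datum $f$: the linear terms are immediate; for $Q^{\epsilon_n}\widehat B(v^{\epsilon_n})$ I use (\ref{B(u,v) estimate})--(\ref{B(vn,vn) limit }) plus uniform convergence $Q^{\epsilon_n}\to Q^{\epsilon}$; for $\widehat F_{Q^{\epsilon_n}}(v^{\epsilon_n},Q^{\epsilon_n})$ I split
\begin{align*}
\widehat F_{Q^{\epsilon_n}}(v^{\epsilon_n},Q^{\epsilon_n})-\widehat F_{Q^{\epsilon}}(v^{\epsilon},Q^{\epsilon}) = \bigl[\widehat F_{Q^{\epsilon_n}}(v^{\epsilon_n},Q^{\epsilon_n})-\widehat F_{Q^{\epsilon_n}}(v^{\epsilon},Q^{\epsilon_n})\bigr]+\bigl[\widehat F_{Q^{\epsilon_n}}(v^{\epsilon},Q^{\epsilon_n})-\widehat F_{Q^{\epsilon}}(v^{\epsilon},Q^{\epsilon})\bigr],
\end{align*}
and control both pieces by {\bf(F1)} combined with $Q^{\epsilon_n}\to Q^{\epsilon}$ uniformly. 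By uniqueness in Proposition \ref{5.I}, $\widetilde v=v^{\epsilon}(\cdot,f,\omega)$, and a contradiction argument promotes convergence to the full sequence. Arguing as in (\ref{continuous weak W}), this already yields $v^{\epsilon_n}(t,f_n,\omega)\rightharpoonup v^{\epsilon}(t,f,\omega)$ weakly in $\mathbb{W}$ at each fixed $t$, which combined with the uniform convergence of $Q^{\epsilon_n}(t,\omega)$ gives the weak statement (\ref{add 0412.2}).

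For the strong statement, it remains to upgrade the pointwise-in-$t$ weak convergence in $\mathbb{W}$ to strong convergence. This is where the energy equation of Lemma \ref{add 0323 lemma.1} enters:
\begin{align*}
\bigl|v^{\epsilon_n}(t,f_n)\bigr|_{\mathbb{W}}^2 = |f_n|_{\mathbb{W}}^2\mathrm{e}^{-\frac{2\nu}{\alpha}t} + 2\int_0^t K\bigl(v^{\epsilon_n}(s,f_n),Q^{\epsilon_n}(s)\bigr)\mathrm{e}^{-\frac{2\nu}{\alpha}(t-s)}\,ds.
\end{align*}
Inspecting the definition (\ref{add 0323.2}), the integrand is a pairing of $\mathrm{curl}$-terms: one factor involves $\mathrm{curl}(v^{\epsilon_n})$ and $\mathrm{curl}(Q^{\epsilon_n,-1}F(Q^{\epsilon_n}v^{\epsilon_n}))$, which converge strongly in $L^2([0,T];L^2(\mathcal{O}))$ by the strong $L^2([0,T];\mathbb{V})$ convergence of $v^{\epsilon_n}$, {\bf(F1)}, and the uniform convergence of $Q^{\epsilon_n}$; the other factor, $\mathrm{curl}(v^{\epsilon_n}-\alpha\Delta v^{\epsilon_n})$, converges only weakly in $L^2([0,T];L^2(\mathcal{O}))$, but the weak$\times$strong product passes to the limit. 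Since $|f_n|_{\mathbb{W}}\to|f|_{\mathbb{W}}$ in the strong case, I obtain $|v^{\epsilon_n}(t,f_n)|_{\mathbb{W}}\to|v^{\epsilon}(t,f)|_{\mathbb{W}}$, which upgraded by the weak convergence yields strong convergence in $\mathbb{W}$ at time $t$; multiplying by $Q^{\epsilon_n}(t,\omega)\to Q^{\epsilon}(t,\omega)$ gives (\ref{add 0412.1}).

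The main obstacle is handling simultaneously the two perturbations — the coefficient $Q^{\epsilon_n}$ and the initial data $f_n$ — inside the nonlinear forcing $\widehat F_{Q^{\epsilon_n}}$ and inside the integrand of the energy equation. The resolution relies on the uniform (in $n$ and $t\in[0,T]$) positivity and boundedness of $Q^{\epsilon_n}(t,\omega)$ and on the Lipschitz bound {\bf(F1)}, which together let me treat $\widehat F_{Q^{\epsilon_n}}$ as a small perturbation of $\widehat F_{Q^{\epsilon}}$ on bounded subsets of $\mathbb{W}$.
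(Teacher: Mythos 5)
Your proposal matches the paper's argument: the paper reduces to convergence of $v^{\epsilon_n}$ via the transform $u^{\epsilon}=Q^{\epsilon}v^{\epsilon}$, derives a uniform $\mathbb{W}$-bound just as you do, and then invokes "the same method as for Proposition \ref{16.6.I} and Remark \ref{add 0320 remark}" — which is exactly the Aubin–Lions compactness, limit identification, pointwise weak convergence, and energy-equation upgrade to strong convergence that you spell out. You have simply made explicit the details the paper delegates to the earlier proposition, including the correct treatment of the simultaneously varying $Q^{\epsilon_n}$ in the nonlinear terms and in the integrand $K$.
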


\begin{proof}
  Note
  \begin{align}
    u^{\epsilon}(t,f,\omega)=Q^{\epsilon}(t,\omega)v^{\epsilon}(t,f,\omega), \quad t\geq 0,
  \end{align}
  and $Q^{\epsilon_n}(t,\omega)\rightarrow Q^{\epsilon}(t,\omega)$ as $\epsilon_n\rightarrow\epsilon$. Hence it suffices to prove
  \begin{align}
    f_n\rightarrow f \text{ strongly in } \mathbb{W} &\Rightarrow v^{\epsilon_n}(t,f_n,\omega)\rightarrow v^{\epsilon}(t,f,\omega)  \text{ strongly in } \mathbb{W}, \\
    f_n\rightharpoonup f \text{ weakly in } \mathbb{W} &\Rightarrow v^{\epsilon_n}(t,f_n,\omega)\rightharpoonup v^{\epsilon}(t,f,\omega)  \text{ weakly in } \mathbb{W},
  \end{align}
where we set $v^0(t,f,\omega):=u^0(t,f)$ as $\epsilon=0$.
Without loss of generality, we may assume $|\epsilon_n|\leq |\epsilon|+1$ for every $n\in\mathbb{N}$. Since $\sup_n |f_n|_{\mathbb{W}}^2\leq C<\infty$,
by the same arguments as for (\ref{W norm estimate of v}), we have
\begin{align}
\sup_{n\in\mathbb{N}}\sup_{t\in[0,T]}\big|v^{\epsilon_n}(t,f_n,\omega)\big|_{\mathbb{W}}^2\leq C(T)\bigg(C+|F(0)|_{\mathbb{V}}^2\int_0^T \mathrm{e}^{2(|\epsilon|+1)|\omega(s)|}\,ds\bigg) .
\end{align}

\noindent Using the similar method as for Proposition \ref{16.6.I} and Remark \ref{add 0320 remark}, Lemma \ref{add 0409.6} can be proved.

\end{proof}

Let $\mathcal{A}^{\epsilon}$ be the random attractor for the random dynamical system $u^{\epsilon}$ in Theorem \ref{32.I}.
Under the assumption {\bf(F3)}, the existence of the global attractor $\mathcal{A}^0$ in $\mathbb{W}$ for the deterministic dynamical system $u^0$ can similarly be achieved by Lemma \ref{add 0324 lemma.1} and Lemma \ref{add 0322 asymptotically compact}.
\begin{lem}\label{add 0409.9}
Assume {\bf(F3)}. Then the family of the random attractors $\{\mathcal{A}^{\epsilon}\}_{|\epsilon|>0}$ is upper semi-continuous as $\epsilon\rightarrow 0$, i.e. for a.e. $\omega\in\Omega$,
\begin{align}
  d(\mathcal{A}^{\epsilon}(\omega),\mathcal{A}^0)\rightarrow 0 \quad\text{ as } \epsilon\rightarrow 0,
\end{align}
where $d$ is the Hausdorff semi-metric defined in (\ref{add 0410.1}).
\end{lem}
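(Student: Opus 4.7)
The plan is to verify the three hypotheses of Lemma \ref{add 0409.8} with $\varphi^{\epsilon}=u^{\epsilon}$ and $\varphi^0=u^0$, using the absorbing ball $G^{\epsilon}(\omega)=\{x\in\mathbb{W}:|x|_{\mathbb{W}}^2\le r^{\epsilon}(\omega)\}$ constructed in Lemma \ref{add 0324 lemma.1}, where
\[
r^{\epsilon}(\omega)=1+C(C_F)|F(0)|_{\mathbb{V}}^2\int_{-\infty}^{0}\mathrm{e}^{\lambda s}\,\mathrm{e}^{-2\epsilon W(s,\omega)}\,ds,\qquad \lambda=\tfrac{\nu-\mathcal{P}^2C_F}{\mathcal{P}^2+\alpha}.
\]
Condition (i) is exactly the strong-convergence statement in Lemma \ref{add 0409.6}, which was engineered for this purpose; nothing remains to do there. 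Condition (ii) will follow by a direct dominated convergence argument: for a.e. $\omega$, $\sup_{|\epsilon|\le 1}\int_{-\infty}^0 \mathrm{e}^{\lambda s}\mathrm{e}^{-2\epsilon W(s,\omega)}\,ds<\infty$ by the law of the iterated logarithm, and as $\epsilon\to 0$ the integrand converges pointwise to $\mathrm{e}^{\lambda s}$, so $r^{\epsilon}(\omega)\to 1+C(C_F)|F(0)|_{\mathbb{V}}^2/\lambda$, a deterministic constant.

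Condition (iii) is the substantive step. I would pick a sequence $\epsilon_n\to 0$ and $z_n\in\mathcal{A}^{\epsilon_n}(\omega)$ and show it has a convergent subsequence in $\mathbb{W}$ by mimicking the three-step proof of Lemma \ref{add 0322 asymptotically compact}, but with $\epsilon$ varying along with $t$. By the invariance $\mathcal{A}^{\epsilon_n}(\omega)=u^{\epsilon_n}\!\bigl(t_n,\mathcal{A}^{\epsilon_n}(\theta(-t_n,\omega)),\theta(-t_n,\omega)\bigr)$ for any $t_n\ge 0$, choose $t_n\to\infty$ and write $z_n=u^{\epsilon_n}(t_n,f_n,\theta(-t_n,\omega))$ with $f_n\in\mathcal{A}^{\epsilon_n}(\theta(-t_n,\omega))\subset G^{\epsilon_n}(\theta(-t_n,\omega))$. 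Since $\sup_n r^{\epsilon_n}(\theta(-t_n,\omega))\mathrm{e}^{-\beta t_n}\to 0$ for any $\beta>0$ (uniformly in small $|\epsilon_n|$ by the same temperedness argument used in (\ref{add 0409.5})), one first extracts weak $\mathbb{W}$-limits by a diagonal procedure on shifted times $j\in\mathbb{N}$: pass to subsequences so that $z_n\rightharpoonup f_{\omega}$ and $u^{\epsilon_n}(t_n-j,f_n,\theta(-t_n,\omega))\rightharpoonup f_{\theta(-j,\omega),0}$ weakly in $\mathbb{W}$. The weak-continuity half of Lemma \ref{add 0409.6} then identifies the shift relation $u^0\bigl(j,f_{\theta(-j,\omega),0}\bigr)=f_{\omega}$ in place of (\ref{add 0320.6}).

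The strong-convergence upgrade will rest on the energy equality (\ref{add 0323.3}) for $u^{\epsilon_n}$. Because $Q^{\epsilon_n}(s,\omega)\to 1$ uniformly on compact time intervals for a.e. $\omega$ and the kernel $\widetilde K$ is continuous in its arguments, I can pass to the limit $n\to\infty$ in (\ref{add 0323.3}) (using the strong-in-$L^2([0,j];\mathbb{V})$ convergence obtained as in (\ref{add 0323.4}), which now holds for the $\epsilon_n$-dependent cocycle by Lemma \ref{add 0409.6}) to obtain, for each fixed $j$,
\[
\limsup_{n\to\infty}|z_n|_{\mathbb{W}}^2\le r^{0}(\theta(-j,\omega))\,\mathrm{e}^{-\frac{2\nu}{\alpha}j}+2\int_0^{j}\mathrm{e}^{-\frac{2\nu}{\alpha}(j-s)}\widetilde K\bigl(u^{0}(s,f_{\theta(-j,\omega),0}),1\bigr)\,ds.
\]
Subtracting the deterministic energy equality for $u^0(j,f_{\theta(-j,\omega),0})=f_{\omega}$ leaves only the transient $[r^{0}(\theta(-j,\omega))-|f_{\theta(-j,\omega),0}|_{\mathbb{W}}^2]\mathrm{e}^{-\frac{2\nu}{\alpha}j}$, which vanishes as $j\to\infty$ by temperedness, giving $\limsup_n|z_n|_{\mathbb{W}}^2\le|f_{\omega}|_{\mathbb{W}}^2$. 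Combined with weak lower semi-continuity this yields the norm convergence and hence the strong convergence $z_n\to f_{\omega}$ in $\mathbb{W}$.

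The main obstacle I anticipate is Step 3: one must control the $\epsilon_n$-dependence uniformly while simultaneously passing $t_n\to\infty$ and $j$ fixed, and verify that the energy equation (\ref{add 0323.3}), together with the convergence results in Lemma \ref{add 0409.6}, is stable under the joint limit $(\epsilon_n,t_n)\to(0,\infty)$. In particular, the $L^2([0,j];\mathbb{V})$-strong convergence of the shifted trajectories $u^{\epsilon_n}(\,\cdot\,,u^{\epsilon_n}(t_n-j,f_n,\theta(-t_n,\omega)),\theta(-j,\omega))$ to $u^0(\,\cdot\,,f_{\theta(-j,\omega),0})$ requires a compactness argument analogous to (\ref{continuous convergence1})--(\ref{continuous convergence4}), but with the $\epsilon_n$-random coefficient equations; once this is in hand, the rest of the argument proceeds in parallel with Lemma \ref{add 0322 asymptotically compact}.
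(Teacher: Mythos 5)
Your overall strategy is the same as the paper's: verify conditions (i)--(iii) of Lemma \ref{add 0409.8} using Lemma \ref{add 0409.6} for (i), dominated convergence and the explicit form of $r^{\epsilon}(\omega)$ for (ii), and an adaptation of the asymptotic-compactness argument from Lemma \ref{add 0322 asymptotically compact} for (iii). Conditions (i) and (ii) are handled correctly and essentially as in the paper, including the observation that the limit $1+C(C_F)|F(0)|_{\mathbb{V}}^2/\lambda$ is deterministic.

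There is, however, a genuine gap in your treatment of condition (iii). That condition asks for the \emph{precompactness of the union} $\bigcup_{0<|\epsilon|<\epsilon_0}\mathcal{A}^{\epsilon}(\omega)$, which means: given an arbitrary sequence $\{z_n\}$ drawn from this union, with $z_n\in\mathcal{A}^{\epsilon_n}(\omega)$ and $\epsilon_n\in(-\epsilon_0,\epsilon_0)\setminus\{0\}$, one must exhibit a convergent subsequence. The sequence $\{\epsilon_n\}$ is bounded, so one can pass to a subsequence $\epsilon_{n_k}\to\epsilon^*$, but $\epsilon^*$ need \emph{not} be $0$; it can be any point of $[-\epsilon_0,\epsilon_0]$. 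Your argument opens with ``I would pick a sequence $\epsilon_n\to 0$'' and then relies concretely on $Q^{\epsilon_n}(s,\omega)\to 1$, the identification $u^0(j,f_{\theta(-j,\omega),0})=f_{\omega}$, and the limiting energy identity with kernel $\widetilde K(u^0(\,\cdot\,),1)$ — all of which presuppose $\epsilon^*=0$. As written, the argument does not cover subsequences with $\epsilon^*\neq 0$, so the precompactness of the union is not established.

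The fix is the one the paper uses and is well within reach of your own machinery: after extracting $\epsilon_{n_k}\to\epsilon^*$ for some $\epsilon^*\in[-\epsilon_0,\epsilon_0]$, run exactly your Steps 1--3 with $u^{\epsilon^*}$ (and $Q^{\epsilon^*}$) in place of $u^0$ (and $1$). Lemma \ref{add 0409.6} was stated for general $\epsilon_n\to\epsilon$, precisely so that the weak and strong continuity needed in the diagonal and energy-equation steps hold for any limit $\epsilon^*$, not only $\epsilon^*=0$. It is also worth making explicit the uniform absorbing ball $B(\rho(\omega))$ with $\rho(\omega):=1+C(C_F)|F(0)|_{\mathbb{V}}^2\int_{-\infty}^{0}\mathrm{e}^{\lambda s+2|\omega(s)|}\,ds\ge r^{\epsilon}(\omega)$ for all $|\epsilon|<1$; you use this implicitly when you assert temperedness uniformly in small $|\epsilon_n|$, and making it explicit cleanly justifies both the weak-compactness extraction at each shift $j$ and the transient bound $[\rho(\theta(-j,\omega))-|f_{\theta(-j,\omega)}|_{\mathbb{W}}^2]\mathrm{e}^{-\frac{2\nu}{\alpha}j}\to 0$ as $j\to\infty$.
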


\begin{proof}
It suffices to show that the conditions (i)-(iii) in Lemma \ref{add 0409.8} are satisfied.
By Lemma \ref{add 0409.6}, the condition (i) is satisfied.

In Lemma \ref{add 0324 lemma.1}, we have proved that for each $\epsilon\in\mathbb{R}\backslash\{0\}$, the random dynamical system $u^{\epsilon}$ possesses an absorbing random ball $B(r^{\epsilon}(\omega))$, where
\begin{align}\label{add 0409.4}
\begin{aligned}
   r^{\epsilon}(\omega):=& 1+C(C_F)|F(0)|_{\mathbb{V}}^2\int_{-\infty}^{0} \mathrm{e}^{\frac{\nu-\mathcal{P}^2 C_F }{\mathcal{P}^2+\alpha} s}\frac{1}{{Q^{\epsilon}(s,\omega)}^2} ds \\
  \leq & 1+C(C_F)|F(0)|_{\mathbb{V}}^2\int_{-\infty}^{0} \mathrm{e}^{\frac{\nu-\mathcal{P}^2 C_F }{\mathcal{P}^2+\alpha} s + 2|\epsilon||\omega(s)|} ds .
\end{aligned}
\end{align}
Hence the dominated convergence theorem yields
\begin{align}
  \limsup_{\epsilon\rightarrow 0}r^{\epsilon}(\omega)\leq 1+C(C_F)|F(0)|_{\mathbb{V}}^2,
\end{align}
which deduce the condition (ii) in Lemma \ref{add 0409.8} immediately.

For the condition (iii), we first claim that for any tempered random set $D$, there exists a constant $\widetilde{T}>0$, such that
\begin{align}\label{add 0409.7}
  \sup_{0<\epsilon<1}|u^{\epsilon}(t,f,\omega)|_{\mathbb{W}}\leq \rho(\omega), \quad\forall\,t>\widetilde{T} ,f\in D(\theta(-t,\omega)) ,
\end{align}
where
\begin{align}
  \rho(\omega):=1+C(C_F)|F(0)|_{\mathbb{V}}^2\int_{-\infty}^{0} \mathrm{e}^{\frac{\nu-\mathcal{P}^2 C_F }{\mathcal{P}^2+\alpha} s + 2|\omega(s)|} ds > r^{\epsilon}(\omega), \quad\forall\,\epsilon\in (-1,1).
\end{align}
In other words, the tempered random ball $B(\rho(\omega)):=\{x: |x|_{\mathbb{W}}^2\leq\rho(\omega)\}$ absorbs every tempered random set of $\mathbb{W}$ uniformly with respect to $\epsilon\in(-1,1)$.
Claim (\ref{add 0409.7}) is followed easily from (\ref{add 0409.5}), where the convergence is uniform with respect to $\epsilon\in(-1,1)$.

Now we come to show the condition (iii) in Lemma \ref{add 0409.8}.
Let $\{u_n\}_{n=1}^{\infty}\subseteq\cup_{0<|\epsilon|<1}\mathcal{A}_{\epsilon}(\omega)$.
Then there exists a sequence $\{\epsilon_n\}_{n=1}^{\infty}\subset(-1,1)\backslash\{0\}$ such that $u_n\in\mathcal{A}^{\epsilon_n}(\omega)$ for each $n\in\mathbb{N}$.
Hence we can take a subsequence $\{\epsilon_{n_k}\}_{k=1}^{\infty}$ such that $\epsilon_{n_k}\rightarrow\epsilon$ for some $\epsilon\in[-1,1]$ as $k\rightarrow\infty$, for notational simplicity we still write this subsequence as $\{\epsilon_n\}_{n=1}^{\infty}$ in the following.
The invariance of the random attractors implies that
\begin{align}
  u^{\epsilon}\big(t,\mathcal{A}^{\epsilon}(\theta(-t,\omega)),\theta(-t,\omega)\big)=\mathcal{A}^{\epsilon}(\omega), \quad\forall\,t\geq 0.
\end{align}
Therefore, there exist sequences $\{t_n\}_{n=1}^{\infty}$ and $\{f_n\}_{n=1}^{\infty}$ such that 
\begin{align}
t_n\rightarrow +\infty, \quad f_n\in\mathcal{A}^{\epsilon_n}(\theta(-t_n,\omega)),\quad  u_n=u^{\epsilon_n}(t_n,f_n,\theta(-t_n,\omega)) .
\end{align}
Again, by the invariance of random attractors, we have
\begin{align}
  \mathcal{A}^{\epsilon}(\omega)\subseteq B(r^{\epsilon}(\omega))\subseteq B(\rho(\omega)),\quad\forall\, \epsilon>0.
\end{align}
Hence
\begin{align}
  f_n\in B\big(\rho(\theta(-t_n,\omega))\big), \quad \forall\, n\in\mathbb{N} .
\end{align}
Due to claim (\ref{add 0409.7}), the sequence $\{u^{\epsilon_n}(t_n,f_n,\theta(-t_n,\omega))\}_{n=1}^{\infty}$ is bounded in $\mathbb{W}$.
Now using claim (\ref{add 0409.7}) and Lemma \ref{add 0409.6}, by the same arguments as for Lemma \ref{add 0322 asymptotically compact}, we obtain that the sequence $\{u^{\epsilon_n}(t_n,f_n,\theta(-t_n,\omega))\}_{n=1}^{\infty}$ has a subsequence which is convergent in $\mathbb{W}$. Hence the condition (iii) in Lemma \ref{add 0409.8} is satisfied, which completes the proof.

%

\end{proof}


\section*{Acknowledgements}
The author sincerely thanks Professor Tusheng Zhang and Jianliang Zhai for their instructions and many invaluable suggestions. This work is partly supported by National Natural Science Foundation of China (No.11671372, No.11431014, No.11721101).

\end{document}